\newcommand*{\rom}[1]{\expandafter\@slowromancap\romannumeral #1@}
\newcommand{\D}{\mathcal{D}}
\newcommand{\AXC}{\AxiomC}
\newcommand{\UIC}{\UnaryInfC}
\newcommand{\BIC}{\BinaryInfC}
\newcommand{\DP}{\DisplayProof}
\newcommand{\RL}{\RightLabel}
\newcommand{\LL}{\LeftLabel}
\newtheorem{The}{Theorem}[section]
\newtheorem{Lem}[The]{Lemma}
\newtheorem{Cor}[The]{Corollary}
\newtheorem{Rem}[The]{Remark}
\newtheorem{Examp}[The]{Example}
\newtheorem{Nota}[The]{Notation}
\let\oldproofname=\proofname
\renewcommand{\proofname}{\textit{\rm\bf\oldproofname}}
\title{\bf\Large A Cut-Free Gentzen-Style Sequent Calculus   for the Modal Logic S5
\thanks
{{\it Key Words}: Cut-free, Subformula property, Sequent calculus, Gentzen-style, Modal logic S5.}
\thanks {2010{ \it Mathematics Subject Classification}.  Primary, 03F05, 03B45.}}
\author{{\bf Mojtaba Aghaei $^{{\rm a}}$\thanks{Corresponding author.}~   and~ {\bf Hamzeh Mohammadi}$^{{\rm a}}$ } \\
{\small{ $^{{\rm a}}$Department of Mathematical Sciences,  Isfahan University of Technology}}\vspace{-1mm}\\
{\small{     Isfahan, 84156-83111,  Iran}}\\
{\small{aghaei@cc.iut.ac.ir}}\vspace{-1mm}\\
{\small{hamzeh.mohammadi@math.iut.ac.ir}}\vspace{-1mm}
}
\date\today
\begin{document}
  \maketitle

\begin{abstract}
We present the system G3{\scriptsize S5}, a Gentzen-style sequent calculus system   for the modal propositional logic S5, which in a sense has   the subformula property.  We formulate the rules of G3{\scriptsize S5} in the system $ \text{G3{\scriptsize S5}}^; $ which has the subformula property and prove the admissibility of the weakening, contraction and cut rules for it.
\end{abstract}
\section{\bf Introduction}
Sequent calculus systems for the modal logic S5  have been widely studied for a long time. Several  authors have proposed
many sequent calculus   for S5,  however, each of them presents some
difficulties (see e.g. \cite{ohnishi1957gentzen, ohnishi1959gentzen,wansing1994sequent,sato1980cut}). 
There are also  many extensions of the sequent calculus notably they are labelled sequent calculus 
(see e.g. \cite{brauner2000cut,negri2005proof,mints1997indexed,indrzejczak1998cut}),  display calculus
(see e.g. \cite{ belnap1982display, wansing1999predicate}),
hypersequent calculus 
(see e.g. \cite{ poggiolesi2008cut, restall2005proofnets, kurokawa2013hypersequent}), deep inference system (see e.g.  \cite{stouppa2007deep})
and nested sequent 
(see \cite{brunnler2009deep}). These extensions  are departing  from the Gentzen-style sequent calculus. Among these extensions, labelled and display sequent calculus are syntactically impure because of using explicit semantic parameters.

However, modal logics weaker than S5 have cut-free sequent calculus system. For example, a Gentzen-style sequent calculus for S4 is  presented   in \cite{bpt}.  It is obtained by extending G3c (The sequent calculus for classical logic in which  the structural rules of weakening and contraction are admissible) with the following rules:
\begin{align*}
&\AXC{$ \Gamma,A,\Box A\vdash \Delta $}
\RL{L$\Box $}
\UIC{$ \Gamma,\Box A\vdash\Delta $}
\DP
&&
\AXC{$ \Box\Gamma\vdash A,\Diamond\Delta $}
\RL{R$\Box $}
\UIC{$ \Gamma',\Box\Gamma\vdash\Box A,\Diamond\Delta,\Delta' $}
\DP
\\[0.15cm]
&\AXC{$ \Box\Gamma,A\vdash\Diamond\Delta $}
\RL{L$\Diamond $}
\UIC{$ \Gamma',\Box\Gamma,\Diamond A\vdash\Diamond\Delta,\Delta' $}
\DP
&&
\AXC{$ \Gamma\vdash A,\Diamond A,\Delta $}
\RL{R$\Diamond $.}
\UIC{$ \Gamma\vdash\Diamond A,\Delta $}
\DP
\end{align*}
In this paper, we provide a Gentzen-style sequent calculus system for S5,  and  call it G3{\scriptsize S5}. Similar to the sequent calculus for S4, G3{\scriptsize S5} is obtained by extending G3c with the following rules:
\begin{align*}
&\AXC{$ \Gamma,A,\Box A\vdash \Delta $}
\RL{L$\Box $}
\UIC{$ \Gamma,\Box A\vdash\Delta $}
\DP
&&
\AXC{$ M,\Diamond\bigwedge (P,\neg Q)\vdash N,A  $}
\RL{R$\Box $}
\UIC{$M,P\vdash Q,N,\Box A$}
\DP
\\[0.15cm]
&\AXC{$  A,M\vdash \Box\bigvee(\neg P,Q),N  $}
\RL{L$\Diamond $}
\UIC{$\Diamond A,M,P\vdash Q,N $}
\DP
&&
\AXC{$ \Gamma\vdash A,\Diamond A,\Delta $}
\RL{R$\Diamond $.}
\UIC{$ \Gamma\vdash\Diamond A,\Delta $}
\DP
\end{align*}
 where $M$ and $N $ are multisets of modal formulas, and $ P$ and $Q $  are multisets of atomic formulas.  In the following we prove  a simple sequent to show details of this rules.
 \begin{prooftree}
 	\AXC{$ p,\Box p\vdash p $}
 	\RL{L$ \neg $}
 	\UIC{$ \neg p,p,\Box p\vdash $}
 	\RL{L$ \Box $}
 	\UIC{$ \neg p,\Box p\vdash $}
 	\RL{L$ \Diamond $}
 	\UIC{$ \Diamond\neg p,\Box p\vdash $}
 	\RL{R$ \neg $}
 	\UIC{$ \Diamond\neg p\vdash \neg \Box p $}
 	\RL{R$ \Box $}
 	\UIC{$ \vdash p,\Box\neg \Box p $}
 	\end{prooftree}
The rule L$ \Diamond $ and R$ \Box  $ does  not have subformulas property, but the formulas in the premises are constructed from the atomic formulas in the conclusions, in this  sense the system G3{\scriptsize S5}  has  the subformula property.
Moreover, for convenience  we rewrite this system  using  semicolon (;), which 
not only has the subformula property in the strict sense but also help us to  prove  the admissibility of the  weakening, contraction and cut rules.
 
\textbf{Organization}. This paper is organized as follows. In Section \ref{sec 2}, we recall axioms of S5 and its kripke models. In Section \ref{G3S5}, we present Gentzen-style sequent calculus G3{\scriptsize S5}, and also  show that if one  uses some simpler versions  of 	 the rules 
L$\Diamond $ and R$\Box $, then  the  completeness, invertibility of the rules, and admissibility of the cut rule will not be satisfied. In addition in Subsection \ref{Another version}, for convenience,  we formulate the rules of G3{\scriptsize S5} by   semicolon  (;), in which  the system G3{\scriptsize S5} enjoys the subformula property. Using this notation,  in  Section \ref{Structural properties}, we prove the admissibility of the weakening, contraction,  the general versions of  the rules L$\Diamond $ and R$\Box $,  and some other properties of the G3{\scriptsize S5}. In Section \ref{sec cut}, besides the cut rule, a new version of cut rule is introduced, where the admissibility of each of them  concludes the admissibility of the other. We prove  the admissibility of them by induction simultaneously.
\section{\bf  Modal logic S5} \label{sec 2}
In this section, we recall  the  axiomatic formulation and the Kripke semantic of modal logic  S5.

The language of  modal logic S5 is obtained by adding to the language of
propositional logic the two modal operators $ \Box $ and $ \Diamond $. Atomic formulas are denoted by 
$ p,q,r, $ and so on.
 Formulas, denoted by 
$ A,B,C,\ldots $,
 are defined by the following grammar:
\[ A:=\bot\,|\top\,|p\,|\neg A|\,A\wedge A|\,A\vee A|\,A\rightarrow A|\,\Diamond A|\,\Box A.\]
where $ \bot $ is a  constant for falsity, and $ \top  $ is a  constant for truth.

Modal logic S5 has the following axiom schemes:
\begin{align*}
& \text{All propositional tautologies,}\\
& \text{(Dual)} \quad \Box A\leftrightarrow \neg \Diamond \neg A,\\
&\text{(K)}\quad\Box(A\rightarrow B)\rightarrow(\Box A\rightarrow \Box B),\\
&\text{(T)}\quad  \Box A\rightarrow A,\\
&\text{(5)} \quad \Diamond A\rightarrow \Box\Diamond A.
\end{align*}
Instead of (5) we can use:
\begin{align*}
& \text{(4)}\quad \Box A\rightarrow \Box\Box A,\\
& \text{(B)} \quad  A\rightarrow \Box\Diamond A.
\end{align*}
The proof rules are Modus Ponens and  Necessitation:
\begin{center}
	\AxiomC{$ A $}
	\AxiomC{$ A\rightarrow B $}
	\RightLabel{ MP,}
	\BinaryInfC{$ B $}
	\DisplayProof
	$\quad$
	\AxiomC{$A$}
	\RightLabel{N.}
	\UnaryInfC{$\Box A$}
	\DisplayProof
\end{center}
The rule Necessitation,  can be applied only to premises which are derivable in the
axiomatic system.
 If 
$ A $ is derivable in S5 from the hypotheses
$ \Gamma $, we   write $ \Gamma\vdash_{\text{S5}} A $.

A Kripke model $\mathcal{M}$ for S5 is a triple
$ \mathcal{M}=(W, R, V) $
where $ W $ is a set of states, $ R $ is an equivalence relation on $ W $, and
$ V: \varPhi\rightarrow \mathcal{P} (W) $ is a  valuation function, where
$ \varPhi $
is the set of propositional variables.
 Suppose that $ w\in W $. We inductively
define the notion of a formula $ A $ being satisfied in $\mathcal{M}$ at state $ w $ as follows:
\begin{itemize}
\item
$\mathcal{M},w \vDash p \quad \text{iff} \quad  w\in V(p),\, \text{where}\,\,
 p\in \varPhi $,
\item
$\mathcal{M},w \vDash \neg A \quad \text{iff} \quad \mathcal{M},w \nvDash  A,$
\item
$\mathcal{M},w \vDash A\vee B  \quad \text{iff} \quad \mathcal{M},w \vDash  A \,\,\text{or}\,\, \mathcal{M},w \vDash  B,$
\item
$\mathcal{M},w \vDash A\wedge B  \quad \text{iff} \quad \mathcal{M},w \vDash  A \,\,\text{and}\,\, \mathcal{M},w \vDash  B,$
\item
$\mathcal{M},w \vDash A\rightarrow B  \quad \text{iff} \quad \mathcal{M},w \nvDash  A \,\text{or}\, \mathcal{M},w \vDash  B,$
\item
$\mathcal{M},w \vDash \Diamond A \quad \text{iff} \quad  \,\,\mathcal{M},v \vDash  A\,\,\text{for some}\,\, v\in W \,\,\text{such that}\,\, R(w,v), $
\item
$\mathcal{M},w \vDash \Box A \quad \text{iff} \quad \mathcal{M},v \vDash  A \,\,\text{for all}\,\, v\in W \,\text{such that}\, R(w,v). $
\end{itemize}
Formula A is S5-valid iff it is true in every state of every S5-model.

Note that  from the point of view of the Kripke semantics, S5 is sound
and complete with respect to two different classes of frames which are equivalent. In the first
class, the accessibility relation between states of a Kripke frame is reflexive, transitive, and
symmetric (equivalently it is reflexive and Euclidean). In the second class, the accessibility
relation is absent (equivalently each two states of a Kripke frame are in relation); for more details see \cite{ Chellas, Blackburn}.
\begin{Lem}\label{2.1}
Let
$\mathcal{M}=(W,R,V)$
be a {\rm Kripke} model for
$ S5 $.
\begin{itemize}
\item[{\rm (1)}]
$\mathcal{M},w \vDash \Box A \quad \text{iff} \quad \mathcal{M},w' \vDash \Box A$, for all $ w'\in W $, where $ wRw' $.
\item[{\rm (2)}]
$\mathcal{M},w \vDash \Diamond A \quad \text{iff} \quad \mathcal{M},w' \vDash \Diamond A$, for all $ w'\in W $, where $ wRw' $.
\item[{\rm (3)}]
If
$\mathcal{M},w \vDash  A$, then  $\mathcal{M},w' \vDash \Diamond A$, for all $ w'\in W $, where $ wRw' $.
\end{itemize}
\end{Lem}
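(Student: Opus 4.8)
The plan is to derive all three statements directly from the fact that the accessibility relation $R$ of an S5-model is an equivalence relation, i.e.\ reflexive, symmetric and transitive. No induction on the complexity of $A$ is needed: throughout, $\Box A$, $\Diamond A$ and $A$ can be treated as black boxes, and everything reduces to manipulating the relation $R$.

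For (1) I would prove the two implications separately. For the left-to-right direction, assume $\mathcal{M},w\vDash\Box A$ and fix $w'$ with $wRw'$; to show $\mathcal{M},w'\vDash\Box A$ I take an arbitrary $v$ with $w'Rv$, apply transitivity of $R$ to get $wRv$, and then the hypothesis $\mathcal{M},w\vDash\Box A$ yields $\mathcal{M},v\vDash A$. For the right-to-left direction, reflexivity gives $wRw$, so instantiating the assumption at $w'=w$ gives $\mathcal{M},w\vDash\Box A$ at once.

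Part (2) goes the same way, but now using symmetry in addition to transitivity: from $\mathcal{M},w\vDash\Diamond A$ pick a witness $v$ with $wRv$ and $\mathcal{M},v\vDash A$; given $w'$ with $wRw'$, symmetry gives $w'Rw$ and then transitivity gives $w'Rv$, so the same $v$ witnesses $\mathcal{M},w'\vDash\Diamond A$. The converse direction is again just an instance of reflexivity. For (3), fix $w'$ with $wRw'$; by symmetry $w'Rw$, and since $\mathcal{M},w\vDash A$ the state $w$ itself witnesses $\mathcal{M},w'\vDash\Diamond A$.

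There is no genuine obstacle here; the only thing to be careful about is bookkeeping, namely recording exactly which of reflexivity, symmetry and transitivity is invoked at each step and recalling that in an S5-model $R$ satisfies all three. (One could alternatively read (1) and (2) off from the "relation-free" semantics for S5 noted above, in which $\Box A$ and $\Diamond A$ hold either at every state of a model or at none; but I would prefer the direct argument so as to stay within the single fixed Kripke semantics used in the rest of the paper.)
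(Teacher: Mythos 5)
Your proof is correct and is exactly the argument the paper has in mind: the paper's own proof is the one-line remark that the lemma ``follows from the definition of satisfiability and equivalence of $R$,'' and your write-up simply makes explicit which of reflexivity, symmetry and transitivity is used at each step. No gaps.
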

\begin{proof}
The proof clearly follows from   the definition  of satisfiability  and equivalence  of
$ R $.
\end{proof}
\section{ The Gentzen system G3{\scriptsize S5}}\label{G3S5}
The concept of a sequent is defined as usual (see, e.g. \cite{Buss,bpt, negri2008structural}).
 We  use the notation
$ \Gamma\vdash\Delta $
for sequent, where both $ \Gamma $
and
$ \Delta $ are multisets of formulas. The sequent
$ \Gamma\vdash\Delta $
is S5-valid if
$ \bigwedge\Gamma\rightarrow\bigvee\Delta $ is S5-valid.
 The notation
$ \Gamma\vdash_n\Delta $
means that 
$ \Gamma\vdash\Delta $
is derivable  with a height of derivation
at most $ n $.
\begin{Nota}\label{Nota}{\rm Throughout this paper, we  use the following notations.
\begin{itemize}
\item  
The  multisets of arbitrary formulas are denoted by  $ \Gamma,\Gamma',\Gamma_1,\Gamma_2$ and $ \Delta,\Delta', \Delta_1,\Delta_2$. The multistes of modal formulas are denoted by
$ M,M',M_1,M_2$ and $N,N',N_1,N_2 $.  The multistes of atomic formulas are denoted by $P,P', P_1,P_2,P_3$ and $Q,Q',Q_1,Q_2,Q_3 $.
\item 
The union of multisets  $ \Gamma $ and $ \Delta $  is
indicated simply by $ \Gamma,\Delta $. The union of a multiset $ \Gamma $
with a singleton multiset $ \{A\} $ is written  $ \Gamma, A $.

\item
$ \Box\Gamma=\{\Box A: A\in \Gamma\} $,
$ \Diamond\Gamma=\{\Diamond A: A\in \Gamma\} $ and
$ \neg\Gamma=\{\neg A: A\in \Gamma\} $.
\end{itemize}
}
\end{Nota}
The   system  G3{\scriptsize S5}  is given in  Table \ref{tabel 1},  which  in
the rules
R$\Box $
and
L$ \Diamond $, 
$ M$ and $N $
are multisets of modal  formulas, and
$ P$ and $Q $
are multisets of atomic formulas. The formulas $ \Diamond\bigwedge(P,\neg Q) $ in the antecedent  and $ \Box\bigvee(\neg P,Q) $ in the succedent of the premises in these rules have the same role in derivations, and equivalently can be exchanged, or be taken both of them; taking each of them, one can prove the admissibility of the others. In a bottom-up proof search, these formulas work as storage for $ P $ in the antecedent and $ Q $ in the succedent, that  in final steps  they may be  used  to get axioms (using the rules R$ \Box $ or L$ \Diamond $ with $ \Diamond\bigwedge(P,\neg Q) $ or $ \Box\bigvee(\neg P,Q) $    as principal formulas, which are probably  followed by the rule R$ \Diamond $ or L$ \Box $, see Example \ref{exa}).  The rules R$ \Box $ and L$ \Diamond $ are valid for each $ \Gamma $ and $ \Delta $ instead of $ P $ and $ Q $, and 
in Lemma \ref{General}, we show that the general versions of these rules  are  admissible  in G3{\scriptsize S5}. Since these rules do not have the subformula property in the strict sense, we restrict them in the system to multisets $ P $ and $ Q $  of atomic formulas. Because of this restriction, as mentioned above, these formulas are used as principal formulas in final steps of a bottom-up proof search.
\begin{table}[H]
	\caption{The Gentzen system G3{\scriptsize S5} for the modal logic S5\label{tabel 1}}
	\begin{center}
	\begingroup
	\def\arraystretch{2}
		\begin{tabular}{|ll|}
			\hline
			\AXC{}
			\RL{Ax}
			\UIC{$p,\Gamma\vdash \Delta,p$}
			\DP
			  &
			  \AXC{}
			  \RL{Ax}
			  \UIC{$\bot,\Gamma\vdash \Delta$}
			  \DP
			        \\
			\AXC{$ \Gamma\vdash \Delta, A $}
			\RL{L$ \neg $}
			\UIC{$ \neg A,\Gamma\vdash \Delta $}
			\DP
			&
			 \AXC{$A,\Gamma\vdash \Delta $}
			 \RL{R$\neg$}
			 \UIC{$\Gamma\vdash \Delta, \neg A $}
			 \DP
			      \\
			 \AXC{$ A,B, \Gamma\vdash \Delta $}
			 \RL{L$\wedge $}
			 \UIC{$ A\wedge B,\Gamma\vdash \Delta $}
			 \DP
			  & 
			  \AXC{$ \Gamma\vdash \Delta, A\quad \Gamma\vdash \Delta, B$}
			  \RL{R$\wedge $}
			  \UIC{$\Gamma\vdash \Delta, A \wedge B $}
			  \DP
			      \\
			 \AXC{$A, \Gamma\vdash \Delta\quad  B,\Gamma\vdash \Delta $}
			 \RL{L$\vee$}
			 \UIC{$ A\vee B,\Gamma\vdash \Delta$}
			 \DP
			  & 
			  \AXC{$\Gamma\vdash \Delta, A, B  $}
			  \RL{R$\vee $}
			  \UIC{$ \Gamma\vdash \Delta, A \vee B$}
			  \DP
			      \\
			 \AXC{$ \Gamma\vdash \Delta, A\quad  B,\Gamma\vdash \Delta $}
			 \RL{L$\rightarrow $}
			 \UIC{$A\rightarrow B,\Gamma\vdash \Delta $}
			 \DP
			  & 
			  \AXC{$ A,\Gamma\vdash \Delta, B  $}
			  \RL{R$\rightarrow$}
			  \UIC{$\Gamma\vdash \Delta, A \rightarrow B $}
			  \DP
			         \\
			 \AXC{$  A,M\vdash \Box\bigvee(\neg P,Q),N  $}
			 \RL{L$\Diamond $}
			 \UIC{$\Diamond A,M,P\vdash Q,N $}
			 \DP
			 & 
			 \AXC{$ \Gamma\vdash \Delta, \Diamond A,A  $}
			 \RL{R$\Diamond $}
			 \UIC{$\Gamma\vdash \Delta,\Diamond A $}
			 \DP
			       \\
			  \AXC{$  \Gamma, A, \Box A\vdash\Delta $}
			  \RL{L$\Box $}
			  \UIC{$ \Gamma,\Box A\vdash \Delta$}
			  \DP
			  & 
			   \AXC{$  M, \Diamond\bigwedge(P,\neg Q)\vdash N,A  $}
			  \RL{R$\Box $}
			  \UIC{$M,P\vdash Q,N,\Box A$}
			  \DP
			  \\
			\hline
		\end{tabular}
	\endgroup
	\end{center}
\end{table}
 Note that the premises in the rules L$\Diamond $ and R$\Box $ can be constructed  from the conclusions. 
In this sense, G3{\scriptsize S5} has the subformula property.
\begin{Lem}\label{arbit}
Sequents of the form
$ A\vdash A $,
with arbitrary formula 
$ A $, are derivable in {\rm G3{\scriptsize S5}}.
\end{Lem}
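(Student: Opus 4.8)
The plan is to prove the slightly stronger statement: \emph{for all multisets $\Gamma,\Delta$, the sequent $\Gamma,A\vdash A,\Delta$ is derivable in {\rm G3{\scriptsize S5}}}; the lemma is then the instance $\Gamma=\Delta=\emptyset$. The strengthening is genuinely needed — already in the case $A=B\wedge C$ one is forced to apply the induction hypothesis to a proper subformula while its sibling sits in the context, and at this stage the admissibility of weakening is not yet available, so it cannot be invoked. I would argue by induction on the number of connectives of $A$. If $A$ is a propositional variable or $\bot$ (and similarly $\top$), the sequent is an instance of an axiom. If $A=\neg B$, apply R$\neg$ and then L$\neg$ bottom-up to reduce the goal to $\Gamma,B\vdash B,\Delta$, an instance of the induction hypothesis for $B$. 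If $A=B\wedge C$, apply R$\wedge$ and then L$\wedge$ to each premise, reducing to $\Gamma,B,C\vdash B,\Delta$ and $\Gamma,B,C\vdash C,\Delta$, which are instances of the induction hypotheses for $B$ and for $C$ with the sibling subformula absorbed into the context. The cases $A=B\vee C$ and $A=B\rightarrow C$ are dual, using R$\vee$/L$\vee$ and R$\rightarrow$/L$\rightarrow$ (in the last case R$\rightarrow$ also puts $B$ on the left, which is harmless).

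The only real work is in the two modal cases, since R$\Box$ and L$\Diamond$ have conclusions of a restricted shape and so cannot be applied with an arbitrary context. For $A=\Box B$ I would first run an inner induction on the number of occurrences of propositional connectives in $\Gamma,\Delta$ that lie outside the scope of any $\Box$ or $\Diamond$. As long as this number is positive, some formula of the context has a propositional main connective; decompose it with the corresponding left- or right-rule — this strictly decreases the inner measure and leaves the goal of the form $\Gamma',\Box B\vdash\Box B,\Delta'$, so the inner hypothesis applies (to both premises, for the two-premise rules). When the measure is $0$, the context consists of atomic and modal formulas, say $\Gamma=M_1,P_1$ and $\Delta=N_1,Q_1$, and one may apply R$\Box$ with principal formula $\Box B$ and with $M:=M_1,\Box B$, $P:=P_1$, $Q:=Q_1$, $N:=N_1$; its premise is $M_1,\Box B,\Diamond\bigwedge(P_1,\neg Q_1)\vdash N_1,B$. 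Applying L$\Box$ to the leftover copy of $\Box B$ turns this into $M_1,\Diamond\bigwedge(P_1,\neg Q_1),B,\Box B\vdash N_1,B$, which is an instance of the induction hypothesis for $B$ (the leftover $\Box B$ and the storage formula $\Diamond\bigwedge(P_1,\neg Q_1)$ are simply extra context; and if $P_1=Q_1=\emptyset$ the storage formula is absent). The case $A=\Diamond B$ is symmetric: reduce the context to atomic and modal formulas, apply L$\Diamond$ with the leftover $\Diamond B$ placed into $N$, then apply R$\Diamond$ to that copy, again reaching the induction hypothesis for $B$.

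I expect this modal step to be the only delicate point: one has to pick the right parameter for the context reduction — counting only propositional connectives not guarded by a modality, so that decomposing strictly decreases it while the unavoidable $\Box$/$\Diamond$ formulas of the context do no harm — and to notice that the restricted forms of R$\Box$ and L$\Diamond$ already suffice here, the auxiliary formula $\Diamond\bigwedge(P,\neg Q)$ (equivalently $\Box\bigvee(\neg P,Q)$) being inert and merely carried along as modal context. The base cases, the propositional cases, and the bookkeeping for the two-premise rules in the inner induction are routine.
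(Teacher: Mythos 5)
Your proof is correct and follows the same route as the paper, which simply states that the result is ``routine by induction on the complexity of the formula $A$'' and omits all details. Your strengthened induction hypothesis (arbitrary context $\Gamma,A\vdash A,\Delta$) and the inner induction that reduces the context to atomic and modal formulas before applying R$\Box$/L$\Diamond$ are exactly the bookkeeping that the word ``routine'' conceals, and both steps check out against the rules of Table~\ref{tabel 1}.
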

\begin{proof}
The proof is routine by induction on the complexity of the formula $ A $.
\end{proof}
\begin{Examp}\label{exa}{\rm
The following sequents are derivable in G3{\scriptsize S5}.
\begin{enumerate}
\item \label{exa1}
$ (\Diamond\Box(\neg p\vee q)\wedge p)\rightarrow q $
\item \label{exa2}
$ p\rightarrow(q\vee \Box\Diamond(p\wedge \neg q)) $
\item \label{exa4}
$ \Diamond (p\rightarrow \Box p) $
\item \label{exa5}
$ \Box(\Box\neg p\vee p)\rightarrow \Box(\neg p\vee \Box p) $
\end{enumerate}}
\end{Examp}
\begin{proof}
$ \, $\\
\begin{center}
1.
	    \AXC{$ \D_1 $}
	    \noLine
		\UIC{$\Box(\neg p\vee q)\vdash \Box(\neg p\vee q)$}
		\RL{L$\Diamond $}
		\UIC{$ \Diamond\Box(\neg p\vee q), p\vdash q $}
		\RL{L$\wedge $}
		\UIC{$ \Diamond\Box(\neg p\vee q)\wedge p\vdash q $}
		\RL{R$\rightarrow $}
		\UIC{$ \vdash(\Diamond\Box(\neg p\vee q)\wedge p)\rightarrow q $}
		\DP
		$ \quad $
2.
	    \AXC{$ \D_2 $}
	    \noLine
		\UIC{$\Diamond(p\wedge \neg q)\vdash \Diamond(p\wedge \neg q) $}
		\RL{R$\Box $}
		\UIC{$ p\vdash q, \Box\Diamond(p\wedge \neg q) $}
		\RL{R$\vee $}
		\UIC{$ p\vdash q\vee \Box\Diamond(p\wedge \neg q) $}
		\RL{R$\rightarrow $}
		\UIC{$ \vdash p\rightarrow (q\vee \Box\Diamond(p\wedge \neg q)) $}
		\DP
	\end{center}
where    $ \D_1 $ and $ \D_2 $ are used for a derivation by Lemma \ref{arbit}. 
\vspace{0.4cm}
\begin{center}
3.
		\AXC{}
		\RL{Ax}
		\UIC{$p,\Diamond p\vdash p, \Box p, \Diamond(p\rightarrow \Box p)$}
		\RL{R$\rightarrow$}
		\UIC{$\Diamond p\vdash p, p\rightarrow \Box p, \Diamond(p\rightarrow \Box p)$}
		\RL{R$\Diamond$}
		\UIC{$ \Diamond p\vdash p, \Diamond(p\rightarrow \Box p) $}
		\RL{R$\Box$}
		\UIC{$p\vdash \Box p, \Diamond (p\rightarrow \Box p)$}
		\RL{R$\rightarrow$}
		\UIC{$ \vdash p\rightarrow \Box p, \Diamond(p\rightarrow \Box p) $}
		\RL{R$\Diamond $}
		\UIC{$ \vdash\Diamond ( p\rightarrow \Box p) $}
		\DP
		\end{center}
	\vspace{0.4cm}
\begin{prooftree}
		\AXC{$  $}
		\LL{4.}
		\RL{Ax}
		\UIC{$ p,\Box\neg p,\Box(\Box\neg p\vee p)\vdash  \Box p,p$}
		\RL{L$\neg ,R \neg $}
	\UIC{$\neg p,\Box\neg p,\Box(\Box\neg p\vee p)\vdash \neg p,\Box p$}
	\RL{L$\Box$}
	\UIC{$\Box\neg p,\Box(\Box\neg p\vee p)\vdash \neg p,\Box p$}
	   \AXC{$ \D_4 $}
	   \noLine  
	   	\UIC{$\Box(\Box\neg p\vee p),\Diamond(p\wedge p)\vdash p$}
	   \RL{R$\Box$} 
\UIC{$p,p,\Box(\Box\neg p\vee p)\vdash \Box p$}
\RL{R$\neg $}
\UIC{$p,\Box(\Box\neg p\vee p)\vdash \neg p,\Box p$}
\RL{L$\vee$}
\BIC{$\Box\neg p\vee p,\Box(\Box\neg p\vee p)\vdash\neg p,\Box p$}
\RL{L$\Box$}
\UIC{$\Box(\Box\neg p\vee p)\vdash \neg p, \Box p$}
\RL{R$\vee $}
\UIC{$\Box(\Box\neg p\vee p)\vdash \neg p\vee \Box p$}
\RL{R$\Box $}
\UIC{$\Box(\Box\neg p\vee p)\vdash \Box(\neg p\vee \Box p)$}
\RL{R$\rightarrow$,}
\UIC{$\vdash\Box(\Box\neg p\vee p)\rightarrow \Box(\neg p\vee \Box p)$}
	\end{prooftree}
where $ \D_4 $ is as follows
\begin{prooftree}
	 \AXC{}
	\RL{Ax}
	\UIC{$\Box\neg p,\Box(\Box\neg p\vee p),p, p\vdash \Box p,p$}
	\RL{L$\Box $,L$ \neg $}
	\UIC{$\Box\neg p,\Box(\Box\neg p\vee p),p, p\vdash \Box p$}
	\RL{L$\wedge $}
	\UIC{$\Box\neg p,\Box(\Box\neg p\vee p),p\wedge p\vdash \Box p$}
	\RL{L$\Diamond $} 
	\UIC{$\Box\neg p,\Box(\Box\neg p\vee p),\Diamond(p\wedge p)\vdash p$}
	\AXC{}
	\RL{Ax}
	\UIC{$p,\Box(\Box\neg p\vee p),\Diamond(p\wedge p)\vdash p$}
	\RL{L$\vee$}
	\BIC{$\Box\neg p\vee p,\Box(\Box\neg p\vee p),\Diamond(p\wedge p)\vdash p$}
	\RL{L$\Box$.}
 	\UIC{$\Box(\Box\neg p\vee p),\Diamond(p\wedge p)\vdash p$}
	\end{prooftree}
\end{proof}
In the following remark, some cases of the rules L$\Diamond $ and R$\Box $ are expressed that if they are  used instead of the rules L$\Diamond $ and R$\Box $ in G3{\scriptsize S5}, then the system is   not  complete.
\begin{Rem}{\rm
Consider the following cases of the rules L$\Diamond $ and R$\Box $.
\begin{enumerate} 
\item[(1)]\label{one}
\AXC{$ A,M,\Diamond P\vdash \Box Q, N $}
\RL{L$\Diamond $}
\UIC{$ \Diamond A,M, P\vdash  Q, N $}
\DP
$ \quad $
\AXC{$ M,\Diamond P\vdash \Box Q, N,A $}
\RL{R$\Box $}
\UIC{$ M, P\vdash  Q, N,\Box A $}
\DP
\item[(2)]\label{two}
\AXC{$ A,M,\Diamond \Gamma\vdash \Box \Delta, N $}
\RL{L$\Diamond $}
\UIC{$ \Diamond A,M, \Gamma\vdash \Delta, N $}
\DP
$ \quad $
\AXC{$ M,\Diamond \Gamma\vdash \Box\Delta, N,A $}
\RL{R$\Box $}
		 \UIC{$M, \Gamma\vdash  \Delta, N,\Box A $}
		 \DP
		 \item[(3)]\label{three}
		 	\AXC{$ A,M,\Diamond \bigwedge P\vdash \Box \bigvee Q, N $}
		 \RL{L$\Diamond $}
		 \UIC{$ \Diamond A,M, P\vdash  Q, N $}
		 \DP
		 $ \quad $
		 \AXC{$ M,\Diamond \bigwedge P\vdash \Box \bigvee Q, N,A $}
		 \RL{R$\Box $}
		 \UIC{$ M, P\vdash  Q, N,\Box A $}
		 \DP
		 \item[(4)]\label{four}
		 \AXC{$ A,M,\Diamond \bigwedge \Gamma\vdash \Box \bigvee \Delta, N $}
		 \RL{L$\Diamond $}
		 \UIC{$ \Diamond A,M, \Gamma\vdash  \Delta, N $}
		 \DP
		 $ \quad $
		 \AXC{$ M,\Diamond \bigwedge \Gamma\vdash \Box \bigvee \Delta, N,A $}
		 \RL{R$\Box $}
		 \UIC{$ M, \Gamma\vdash  \Delta, N,\Box A $}
		 \DP
		\end{enumerate}
If we use each of the above cases instead of 	 the rules 
L$\Diamond $ and R$\Box $ in G3{\scriptsize S5}, then  we do not have  completeness, invertibility of the rules, and admissibility of the cut rule.
For example,  the items 1 and 2  of Example \ref{exa}  are not provable in all cases. For more details,
consider the following examples:
		 	\begin{itemize}
		 	\item[(a)]$\vdash p\rightarrow (q\rightarrow(\Box\Diamond(p\wedge q))$
		 	\item[(b)]$\vdash(p\wedge q)\rightarrow \Box\Diamond(p\wedge q)$
		 	\item[(c)]$\vdash\Diamond\Box(p\vee q)\rightarrow (p\vee q)$
		 \end{itemize}
In the case (1), Examples (a), (b) and (c) are  not provable, and the rule
		 \begin{prooftree}
		 	\AXC{$ p\wedge q\vdash \Box\Diamond(p\wedge q) $}
		 	\RL{L$\Diamond $}
		 	\UIC{$ \Diamond(p\wedge q)\vdash \Box\Diamond(p\wedge q) $}
		 \end{prooftree}
		 is not invertible, thus we do not have the cut rule
		 \begin{prooftree}
		 	\AXC{$ p\wedge q\vdash p\wedge q $}
		 	\RL{R$\Diamond $}
		 	\UIC{$ p\wedge q\vdash \Diamond (p\wedge q) $}
		 	\AXC{$ \Diamond(p\wedge q)\vdash \Diamond (p\wedge q) $}
		 	\RL{R$\Box $}
		 	\UIC{$ \Diamond(p\wedge q)\vdash \Box\Diamond (p\wedge q) $}
		 	\RL{Cut.}
		 	\BIC{$ p\wedge q\vdash \Box\Diamond(p\wedge q) $}
		 \end{prooftree}
In the case (2), Example (a)  is not provable  but (b) and (c) are provable, and  
the rules L$\wedge $ and R$\vee $:
		 \begin{center}
		 	\AXC{$ p, q\vdash \Box\Diamond(p\wedge q) $}
		 	\UIC{$ p\wedge q	\vdash \Box\Diamond(p\wedge q) $}
		 	\DP
		 	$ \quad $
		 	\AXC{$ \Diamond\Box(p\vee q)\vdash p, q $}
		 	\UIC{$ \Diamond\Box(p\vee q)\vdash p\vee q $}
		 	\DP
		 	\end{center}
are not invertible  thus we do not have the cut rules
		 \begin{prooftree}
		 \AXC{$ p,q\vdash p\wedge q $}
		 \AXC{$ p\wedge q\vdash \Box\Diamond(p\wedge q) $}
		 \BIC{$ p,q\vdash \Box\Diamond (p\wedge q) $}
		 \end{prooftree}
and
		 \begin{prooftree}
		 	\AXC{$ \Diamond\Box (p\vee q)\vdash p\vee q $}
		 	\AXC{$ p\vee q\vdash p,q $}
		 	\RL{.}
		 	\BIC{$ \Diamond\Box(p\vee q)\vdash p,q $}
		 \end{prooftree}
In the cases (3) and (4), Examples (a), (b) and (c) are provable, and since
		  $ \Diamond\Box(\neg p\vee q),p\vdash q $
		  is not provable in them but we have
		\begin{prooftree}
			\AXC{$ \Box(\neg p\vee q)\vdash \Box (\neg p\vee q) $}
			\RL{L$\Diamond $}
			\UIC{$ \Diamond\Box(\neg p\vee q)\vdash \neg p,q $}
			\end{prooftree}
and
		\begin{prooftree}
			\AXC{$ \Box(\neg p\vee q)\vdash \Box(\neg p\vee q) $}
			\RL{L$ \Diamond$}
			\UIC{$ \Diamond\Box(\neg p\vee q)\vdash \neg p,q $}
			\AXC{$ p\vdash p $}
			\RL{L$\neg $}
			\UIC{$ \neg p ,p\vdash $}
			\RL{Cut}
			\BIC{$ \Diamond\Box(\neg p\vee q),p\vdash q $}
			\end{prooftree}
thus R$\neg $ is not invertible and  the cut rule is not admissible.		
	}
\end{Rem}
\subsection{ The system $ \text{\rm G3{\scriptsize S5}}^;  $ \label{Another version}}
In this subsection,  we present $ \text{\rm G3{\scriptsize S5}}^;  $ another version of G3{\scriptsize S5}  which is a rewriting of sequents by using  semicolon (;). This system  not only has the subformula property in the strict sense but also helps us to  prove  the admissibility of the  weakening, contraction and cut rules. 

Let $ P $ and $ Q $ be multisets of atomic formulas,  $ \Gamma $ and $ \Delta $ be multisets of arbitrary formulas, and let
$ {\cal P}=\{X_1,\ldots, X_n, Y_1,\ldots, Y_m\} $
 be a partition of $ P\bigcup Q $, and $ X'_i=\{p\in P: p\in X_i\}\bigcup \{\neg q: q\in Q,  q\in X_i\} $,  $\, Y'_j=\{\neg p: p\in P,  p\in Y_j\}\bigcup \{q\in Q:  q\in Y_j\} $, for $ i=1,\ldots,n $ and $ j=1,\ldots,m $.
 We denote the  sequent 
$
 \Gamma, \Diamond\bigwedge X'_1,\ldots,\Diamond\bigwedge X'_n\vdash \Box\bigvee Y'_1,\ldots,\Box\bigvee Y'_m,\Delta
$
 by $ \Gamma;P\vdash^{\cal{P}} Q;\Delta $, or  by $ \Gamma;P\vdash Q;\Delta $.

 We say that two occurrences  of  formulas in the middle part (between two semicolons) are related if they are in the same $ X_i $ or $ Y_j $ in the partition. A relatedness of the middle part of the sequent $ \Gamma;P\vdash Q;\Delta $ is an equivalent relation corresponding to a partition of $ P\bigcup Q $ as above.  The corresponding formula of the formulas in  $ X_i $ is $ \Diamond\bigwedge X'_i $, and the corresponding formula of the formulas in  $ Y_j $ is $ \Box\bigvee Y'_j $.

Note that a multiset $ X $ is called a submultiset of $ \Gamma $ if $ M_X(A)\leq M_\Gamma(A) $ for all $ A\in X $, where $ M_X(A) $ and $ M_\Gamma(A) $ are the multiplicities of $ A $ in multisets $ X $ and $ \Gamma $, respectively.

 If  $ P $ or $ Q $ is empty,  we will avoid to write  its corresponding semicolon.

Each sequent
$ \Gamma_1\vdash\Delta_1 $
can be   written variously in the form $ \Gamma;P\vdash Q;\Delta $. For example,  the  sequent
$\Gamma,\Diamond q, \Diamond(q\wedge \neg r)\vdash\Box(\neg q\vee r), \Box(\neg p\vee r),\Box r,\Delta$
 is denoted by  each of the following
 \begin{itemize}
 \item[(1)]   $ \Gamma,\Diamond q; q \vdash r;\Box(\neg q\vee r), \Box(\neg p\vee r),\Box r,\Delta  $
 \item[(2)] $ \Gamma,\Diamond q; q,q \vdash r,r; \Box(\neg p\vee r),\Box r,\Delta  $
\item[(3)]  $ \Gamma,\Diamond q; p,q,q \vdash r,r,r; \Box r,\Delta  $
\item[(4)] $ \Gamma; q,p,q,q \vdash r,r,r,r; \Delta $
 \end{itemize}

In the sequent (1), $ {\cal P}=\{X_1\},  $ $ X_1=\{q, r\} $, $ X'_1=\{q,\neg r\} $,  $ q $ and $ r $ in the middle part  are related,  and their corresponding  formula, $ \Diamond\bigwedge X'_1 $, is $ \Diamond(q\wedge \neg r) $.

 In the sequent (2), $ {\cal P}=\{X_1,Y_1\}$, $ Y_1=\{ q, r\} $, $ Y'_1=\{ \neg q, r\} $,  the new occurrences of $ q $ and $ r $ in the middle part  are related  and their corresponding  formula, $ \Box\bigvee Y'_1 $, is $ \Box(\neg q\vee r) $. Similarly in (3), (4) and (5),  for  $ {\cal P}=\{X_1, X_2,Y_1\}$,  $ {\cal P}=\{X_1, X_2,Y_1,Y_2\}$ and  $ {\cal P}=\{X_1, X_2,Y_1,Y_2, Y_3\}$, where $ X_2=\{q\} $, $ Y_2=\{\neg p,r\} $ and $Y_3=\{r\} $, we have the corresponding formulas $ \Diamond q $, $ \Box(\neg p\vee r) $, and $ \Box r $, respectively.
The premises and conclusion of every rule can be  rewritten  by this notation, and
 the correspondence between the original and rewritten sequents and the relatedness between formulas in the middle parts  in the premises  can be determined from the   correspondence  between them  and the relatedness in the conclusion. Thus by  determining the correspondence and relatedness in the root of a derivation (usually we take  the middle part as the empty set), all correspondences and relatednesses  during a derivation are determined, and we will not encounter any  ambiguity.

Therefore,  we rewrite the rules of  G3{\scriptsize S5} in  Table \ref{tabel 2}, where the premise and conclusion of any rule except L$\Diamond $ and R$\Box $  have the same formulas in the middle parts  and the same corresponding formulas and relatedness.  By applying the rules  L$\Diamond $ and R$\Box $, the atomic formulas in the conclusion move to the middle part in the premise. For example, the corresponding formulas in the rule L$ \Diamond $ are as
\begin{prooftree}
	\AXC{$ M, A, \Diamond\bigwedge X_1,\ldots, \Diamond\bigwedge X_n \vdash \Box\bigvee Y_1,\ldots,\Box\bigvee Y_m, \Box\bigvee(\neg P_1, Q_1), N  $}
	\RL{L$\Diamond $,}
	\UIC{$ M,P_1, \Diamond A, \Diamond\bigwedge X_1,\ldots, \Diamond\bigwedge X_n\vdash \Box\bigvee Y_1,\ldots,\Box\bigvee Y_m,  Q_1, N  $}
\end{prooftree}
where 
$ X_1,\ldots, X_n  $ and
$ Y_1,\ldots, Y_m  $
are submultisets of 
$ P_2,\neg Q_2 $
and
$ \neg P_2,  Q_2 $, respectively, and  $ X'_1,\ldots, X'_n, Y'_1,\ldots, Y'_m $ is the partition of $ P_2,Q_2 $, the middle part of the conclusion. The partition of the middle part of the premise is  $ X'_1,\ldots, X'_n, Y'_1,\ldots, Y'_m, Y'_{m+1} $, where $ Y_{m+1}=\neg P_1, Q_1 $. The formulas in $ P_1 $, $ Q_1 $ are related in the premise with corresponding formula $ \Box\bigvee(\neg P_1, Q_1) $, and the relatedness between formulas in $ P_2,Q_2 $  and their corresponding formulas in the premise is the same as in the conclusion. \\The rule R$ \Box $ is similar, except we have $ X_{n+1}= P_1, \neg Q_1 $  with corresponding formula $ \Diamond\bigwedge(P_1, \neg Q_1) $ in the premise.
\begin{table}[H]
	\caption{The Gentzen system $ \text{\rm G3{\scriptsize S5}}^;  $  \label{tabel 2}}
		\begin{small}	
			\begin{center}
					\begingroup
				\def\arraystretch{2}
		\begin{tabular}{|ll|}
			\hline
			\AXC{}
			\RL{Ax}
			\UIC{$p,\Gamma;P\vdash Q; \Delta,p $}
			\DP
			&
			\AXC{}
			\RL{Ax}
			\UIC{$\bot,\Gamma;P\vdash Q; \Delta $}
			\DP
             \\
			\AXC{$ \Gamma;P\vdash Q; \Delta,A $}
			\RL{L$\neg $}
			\UIC{$ \neg A,\Gamma;P\vdash Q; \Delta $}
			\DP
			&
			\AXC{$ A,\Gamma;P\vdash Q; \Delta $}
			\RL{R$\neg $}
			\UIC{$ \Gamma;P\vdash Q; \Delta,\neg A$}
			\DP
              \\
			\AXC{$ A,\Gamma;P\vdash Q; \Delta $}
			\AXC{$  B,\Gamma;P\vdash Q; \Delta $}
			\RL{L$\vee $}
			\BIC{$ A\vee B,\Gamma;P\vdash Q; \Delta $}
             \DP
             &
             	\AXC{$ \Gamma;P\vdash Q; \Delta, A,B $}
             \RL{R$\vee $}
             \UIC{$\Gamma;P\vdash Q; \Delta,A\vee B $}
             \DP
             \\
             	\AXC{$ A,B,\Gamma;P\vdash Q; \Delta $}
             \RL{L$\wedge $}
             \UIC{$A\wedge B,\Gamma;P\vdash Q; \Delta $}
             \DP
             &
			\AXC{$ \Gamma;P\vdash Q; \Delta,A $}
			\AXC{$ \Gamma;P\vdash Q; \Delta,B $}
			\RL{R$\wedge $}
			\BIC{$ \Gamma;P\vdash Q; \Delta,A\wedge B $}
              \DP
              \\
			\AXC{$ \Gamma;P\vdash Q; \Delta, A $}
			\AXC{$ B,\Gamma;P\vdash Q; \Delta $}
			\RL{L$\rightarrow $}
			\BIC{$ A\rightarrow B,\Gamma;P\vdash Q; \Delta $}
            \DP
              &
			\AXC{$ A,\Gamma;P\vdash Q; \Delta,B $}
			\RL{R$\rightarrow $}
			\UIC{$ \Gamma;P\vdash Q; \Delta,A\rightarrow B $}
           \DP
            \\
			\AXC{$  A,M;P_1,P_2\vdash Q_2,Q_1;N $}
			\RL{L$\Diamond $}
			\UIC{$\Diamond A,M,P_1;P_2\vdash Q_2;Q_1,N $}
			\DP
			&
			\AXC{$\Gamma;P\vdash Q; \Delta,\Diamond A,A $}
			\RL{R$\Diamond $}
			\UIC{$ \Gamma;P\vdash Q; \Delta,\Diamond A$}
			\DP
             \\
			\AXC{$ A,\Box A,\Gamma;P\vdash Q;\Delta $}
			\RL{L$\Box $}
			\UIC{$\Box A,\Gamma;P\vdash Q;\Delta $}
			\DP
			&
			\AXC{$ M;P_1,P_2\vdash Q_2,Q_1;N,A$}
			\RL{R$\Box $}
			\UIC{$M,P_1;P_2\vdash Q_2;Q_1,N,\Box A $}
			\DP
            \\
			\AXC{$M,P_2;P_1,P_3\vdash Q_3, Q_1;Q_2,N  $}
			\RL{ L$\Diamond^; $ }
			\UIC{$M,P_1;P_2,P_3\vdash Q_3, Q_2;Q_1,N  $}
            \DP
             &
             \AXC{$M,P_2;P_1,P_3\vdash Q_3, Q_1;Q_2,N  $}
             \RL{ R$\Box^; $ }
             \UIC{$M,P_1;P_2,P_3\vdash Q_3, Q_2;Q_1,N  $}
             \DP
             \\
			\hline
		\end{tabular}
	\endgroup
		\end{center}
	\end{small}
\end{table}
The two last rules,
L$\Diamond^; $
and
R$\Box^; $,
seem to be the same, but they are versions of the rules L$ \Diamond $ and R$ \Box $ and in
L$\Diamond^; $
the principal formula is
$ \Diamond\bigwedge(P_2,\neg Q_2) $,
and
in
R$\Box^; $
the principal formula is
$ \Box\bigvee(\neg P_2, Q_2) $, which are the corresponding formulas for $ P_2 $ and $ Q_2 $ in the middle parts, as
\begin{prooftree}
	\AXC{$ M, P_2, \Diamond\bigwedge X'_1,\ldots, \Diamond\bigwedge X'_n\vdash \Box\bigvee Y'_1,\ldots,\Box\bigvee Y'_m,\Box \bigvee(\neg P_1,  Q_1),Q_2,  N  $}
	\UIC{$ M, \bigwedge(P_2, \neg Q_2), \Diamond\bigwedge X'_1,\ldots, \Diamond\bigwedge X'_n \vdash \Box\bigvee Y'_1,\ldots,\Box\bigvee Y'_m,\Box\bigvee(\neg P_1, Q_1),N  $}
	\RL{L$\Diamond^; $}
	\UIC{$ M,P_1, \Diamond \bigwedge(P_2, \neg Q_2), \Diamond\bigwedge X'_1,\ldots, \Diamond\bigwedge X'_n\vdash \Box\bigvee Y'_1,\ldots,\Box\bigvee Y'_m, Q_1, N  $}
	\end{prooftree}
and 
\begin{prooftree}
	\AXC{$ M, P_2, \Diamond\bigwedge( P_1,\neg Q_1),  \Diamond\bigwedge X'_1,\ldots, \Diamond\bigwedge X'_n\vdash \Box\bigvee Y''_1,\ldots,\Box\bigvee Y''_m,Q_2,  N  $}
	\UIC{$ M, \Diamond\bigwedge( P_1,\neg Q_1), \Diamond\bigwedge X'_1,\ldots, \Diamond\bigwedge X'_n \vdash \Box\bigvee Y''_1,\ldots,\Box\bigvee Y''_m,\bigvee(\neg P_2,  Q_2),N  $}
	\RL{R$\Box^; $}
	\UIC{$ M,P_1,  \Diamond\bigwedge X'_1,\ldots, \Diamond\bigwedge X'_n\vdash \Box\bigvee Y''_1,\ldots,\Box\bigvee Y''_m, Q_1,\Box\bigvee(\neg P_2, Q_2), N  $}
\end{prooftree}
where 
$ X_1,\ldots, X_n,   Y_1,\ldots, Y_m  $
is the partition  of
$ P_3\bigcup Q_3 $.

In the rule L$ \Diamond^; $, the partition of $ P_2, P_3,Q_3,Q_2 $,  the middle part of the conclusion, is 
\[ X_1,\ldots, X_n,X_{n+1}, Y_1,\ldots, Y_m,\]
 where $ X_{n+1}= P_2\bigcup  Q_2 $. The partition of the middle part of the premise is  $$ X_1,\ldots, X_n, Y_1,\ldots, Y_m, Y_{m+1}, $$ where $ Y_{m+1}=P_1\bigcup Q_1 $. The formulas in $ P_1 $, $ Q_1 $ are related in the premise with corresponding formula $ \Box\bigvee(\neg P_1, Q_1) $, and the relatedness between formulas in $ P_3,Q_3 $  and their corresponding formulas in the premise is the same as in the conclusion. 
 
 In the rule R$ \Box^; $, the partition of $ P_2, P_3,Q_3,Q_2 $,  the middle part of the conclusion, is 
  $$ X'_1,\ldots, X'_n, Y'_1,\ldots, Y'_m, Y'_{m+1}, $$
 where $ Y_{m+1}= \neg P_2,  Q_2 $. The partition of the middle part of the premise is 
 \[ X'_1,\ldots, X'_n,X'_{n+1}, Y'_1,\ldots, Y'_m,\]
   where $ X_{n+1}= P_1, \neg Q_1 $. The formulas in $ P_1 $, $ Q_1 $ are related in the premise with corresponding formula $ \Diamond\bigwedge( P_1,\neg Q_1) $, and the relatedness between formulas in $ P_3,Q_3 $  and their corresponding formulas in the premise is the same as in the conclusion.
\begin{Rem}
	We can consider the system $ \text{\rm G3{\scriptsize S5}}^;  $  primary. As mentioned above the partition  and relatedness of formulas in the middle part of the premises can be determined from theirs in the conclusion of the rules. Then there is no ambiguity in determining  the partition of sequents from the partition in the root, that is usually empty. Therefore it is possible  to remove the partitions  $ {\cal P} $ during a derivation.
	\end{Rem}
 For example, we  prove $\vdash (r\wedge p)\rightarrow (q\rightarrow \Box(\Diamond (p\wedge q)\wedge \Diamond r)) $ in this system as follows.  
\begin{prooftree}
	\AXC{}
	\RL{Ax}
	\UIC{$ p,q,r\vdash \Diamond(p\wedge q),p $}
	\AXC{}
	\RL{Ax}
	\UIC{$ p,q,r\vdash \Diamond(p\wedge q),q $}
	\RL{R$\wedge $}
	\BIC{$ p,q,r\vdash \Diamond(p\wedge q), p\wedge q $}
	\RL{R$\Diamond $}
	\UIC{$ p,q,r\vdash \Diamond(p\wedge q) $}
	\RL{L$\Diamond^; $}
	\UIC{$ ;r,p,q\vdash \Diamond(p\wedge q) $}
	    \AXC{}
	    \RL{Ax}
	    \UIC{$ r,p,q\vdash \Diamond r,r $}
	    \RL{R$\Diamond $}
	    \UIC{$ r,p,q\vdash \Diamond r $}
	    \RL{L$\Diamond^; $}
	    \UIC{$ ;r,p,q\vdash \Diamond r $}
   \RL{R$\wedge $}
   \BIC{$ ;r,p,q\vdash\Diamond (p\wedge q)\wedge \Diamond r $}
   \RL{R$\Box $}
   \UIC{$ r,p,q\vdash\Box(\Diamond (p\wedge q)\wedge \Diamond r) $}
   \RL{L$\wedge $}
   \UIC{$ r\wedge p,q\vdash  \Box(\Diamond (p\wedge q)\wedge \Diamond r) $}
   \RL{R$\rightarrow $}
   \UIC{$ r\wedge p\vdash q\rightarrow \Box(\Diamond (p\wedge q)\wedge \Diamond r) $}
   \RL{R$\rightarrow $}
   \UIC{$\vdash (r\wedge p)\rightarrow (q\rightarrow \Box(\Diamond (p\wedge q)\wedge \Diamond r)) $}
\end{prooftree}
The Case  \ref{exa5} of Example \ref{exa} is proved as follows:
\begin{prooftree}
	\ref{exa5}.
   \AXC{}
   \RL{Ax}
   \UIC{$\Box\neg p,\Box(\Box\neg p\vee p),p\vdash p;p $}
   \RL{L$\neg $}
   \UIC{$\neg p,\Box\neg p,\Box(\Box\neg p\vee p),p\vdash p; $}
   \RL{L$\Box $}
   \UIC{$ \Box\neg p,\Box(\Box\neg p\vee p),p\vdash  p; $}
   \RL{L$\Diamond^; $}
   \UIC{$ \Box\neg p,\Box(\Box\neg p\vee p);p\vdash p $}
   \AXC{}
   \RL{Ax}
   \UIC{$ p,\Box(\Box\neg p\vee p);p\vdash p $}
   \RL{L$\vee $}
   \BIC{$ \Box\neg p\vee p,\Box(\Box\neg p\vee p);p\vdash p $}
   \RL{L$\Box, $}
   \UIC{$ \Box(\Box\neg p\vee p);p\vdash p $}
   \RL{R$\Box $}
   \UIC{$ \Box(\Box\neg p\vee p),p\vdash  \Box p $}
   \RL{R$\neg $}
   \UIC{$ \Box(\Box\neg p\vee p)\vdash \neg p,\Box p $}
   \RL{R$\vee $}
   \UIC{$ \Box(\Box\neg p\vee p)\vdash \neg p\vee\Box p $}
   \RL{R$\Box $}
   \UIC{$ \Box(\Box\neg p\vee p)\vdash \Box(\neg p\vee\Box p) $}
\end{prooftree}
Note that in the above  derivations, going upward, $ r,p,q $ together  in the first and  $ p $ in the latter move into the middle parts in the rules R$ \Box $ and move out in the rules L$ \Diamond^; $.

Using this notation, we prove the  admissibility of the cut rule (Theorem
\ref{cut}) and its special cases  the weakening rule (Lemma \ref{W}) and the contraction rule (Lemma \ref{C}).

\begin{The}[Soundness]\label{Soundness}
	If
	$ \Gamma\vdash\Delta $
	is provable in $ \text{\rm G3{\scriptsize S5}}^;  $, then it is S5-valid.
\end{The}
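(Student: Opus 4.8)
The plan is to prove soundness by induction on the height of the derivation of $\Gamma\vdash\Delta$ in $\text{G3{\scriptsize S5}}^;$. For the base case, the axioms $p,\Gamma;P\vdash Q;\Delta,p$ and $\bot,\Gamma;P\vdash Q;\Delta$ are manifestly S5-valid once we unfold the semicolon notation, since the unfolded sequent still contains $p$ on both sides (resp.\ $\bot$ on the left). For the inductive step, we assume the premises of the last rule applied are S5-valid and show the conclusion is. The propositional rules (L$\neg$, R$\neg$, L$\wedge$, R$\wedge$, L$\vee$, R$\vee$, L$\rightarrow$, R$\rightarrow$) are handled exactly as in G3c, because the semicolon-parts $P$ and $Q$ are carried unchanged and their corresponding $\Diamond$/$\Box$ formulas are unaffected; validity is a routine check using the truth conditions. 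The rule R$\Diamond$ is also immediate, since $\Diamond A$ is equivalent to $\Diamond A\vee A$ over reflexive frames (more precisely, $\mathcal{M},w\vDash A$ implies $\mathcal{M},w\vDash\Diamond A$ by reflexivity), so the conclusion follows from the premise. Similarly L$\Box$ uses axiom (T): $\mathcal{M},w\vDash\Box A$ implies $\mathcal{M},w\vDash A\wedge\Box A$.

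The interesting cases are the modal rules L$\Diamond$, R$\Box$, L$\Diamond^;$ and R$\Box^;$, where the semicolon bookkeeping matters. For each of these I would first translate the rule back to its unfolded form (as spelled out in the excerpt for L$\Diamond$ and R$\Box$), so that, say, R$\Box$ reads: from $M,\Diamond\bigwedge(P_1,\neg Q_1),\Diamond\bigwedge X'_1,\dots\vdash \Box\bigvee Y'_1,\dots,A$ infer $M,P_1,\Diamond\bigwedge X'_1,\dots\vdash \Box\bigvee Y'_1,\dots,Q_1,\Box A$. To prove this rule sound, suppose the conclusion fails at a world $w$ of some S5-model $\mathcal{M}$: then all antecedent formulas hold at $w$ and all succedent formulas fail at $w$. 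In particular $\mathcal{M},w\nvDash\Box A$, so there is a world $v$ with $\mathcal{M},v\nvDash A$; I claim the premise fails at $v$. For this I use Lemma \ref{2.1}: every formula in $M$ is modal, so by parts (1) and (2) (and their immediate consequences for $\neg\Box$, $\neg\Diamond$ built from the truth conditions) its truth value is the same at $w$ and $v$; hence all of $M$ still holds at $v$ and all of $N$ still fails at $v$. The atomic formulas $P_1$ true at $w$ give, by Lemma \ref{2.1}(3), $\mathcal{M},v\vDash\Diamond\bigwedge(P_1,\neg Q_1)$ — here I must check $\bigwedge(P_1,\neg Q_1)$ holds at $w$, which is exactly "$P_1$ true and $Q_1$ false at $w$", both given. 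Likewise each $\Diamond\bigwedge X'_i$ true at $w$ stays true at $v$ by part (2), and each $\Box\bigvee Y'_j$ false at $w$ stays false at $v$ by part (1). Thus the premise fails at $v$, contradicting the induction hypothesis. The cases L$\Diamond$, L$\Diamond^;$, R$\Box^;$ are dual/analogous: for L$\Diamond$ one picks the witness $v$ from $\mathcal{M},w\vDash\Diamond A$; for the "$;$"-rules the principal formula $\Diamond\bigwedge(P_2,\neg Q_2)$ (resp.\ $\Box\bigvee(\neg P_2,Q_2)$) is literally a subformula already present in the unfolded conclusion, so these rules just reshuffle a corresponding formula and an argument like the above (or even a direct semantic equivalence) applies.

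The main obstacle I expect is purely notational rather than mathematical: one has to be careful that the semicolon-to-sequent translation is applied consistently to premise and conclusion, using the stated fact that the partition and relatedness in the premise are determined by those in the conclusion, so that the "corresponding formulas" $\Diamond\bigwedge X'_i$ and $\Box\bigvee Y'_j$ really are the same formulas in premise and conclusion (for all rules except the four modal ones, where exactly one new block is added or moved). Once the unfolding is pinned down, each modal case reduces to a two- or three-line semantic argument whose only real content is Lemma \ref{2.1}. I would therefore organize the write-up as: (i) observe that it suffices to prove that each rule of $\text{G3{\scriptsize S5}}^;$, read on unfolded sequents, preserves S5-validity; (ii) dispatch the axioms and propositional and R$\Diamond$/L$\Box$ rules in one sentence each; (iii) give the R$\Box$ case in full detail as above; (iv) remark that L$\Diamond$, L$\Diamond^;$, R$\Box^;$ are symmetric, citing Lemma \ref{2.1} and the truth conditions.
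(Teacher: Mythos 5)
Your proposal is correct and follows essentially the same route as the paper: induction on the height of the derivation, with the only substantive case being R$\Box$ (and its symmetric/analogous variants), settled semantically via Lemma \ref{2.1} on the persistence of modal formulas across $R$-related worlds. The lone difference is cosmetic: you transfer the countermodel directly to a witness world $v$ where $A$ fails, whereas the paper argues by contradiction at $w$ with a case analysis and then shows $A$ holds at every accessible world --- the two arguments are contrapositives of one another.
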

\begin{proof}
	The proof is by induction on the height of the derivation of $ \Gamma\vdash\Delta $ in $ \text{\rm G3{\scriptsize S5}}^;  $. Initial sequents are obviously valid
	in  every Kripke model for S5.  We only check the induction step for the rule  R$ \Box $, the other cases can be verified similarly. \\
	Suppose that the sequent $ \Gamma\vdash\Delta $ is $ M,P_1;P_2\vdash Q_2;Q_1,N,\Box A $, the conclusion of rule R$ \Box $, with the premise $ M;P_1,P_2\vdash Q_2,Q_1;N,A $, and assume by the induction hypothesis that the premise is valid in every Kripke model for S5.
	By contradiction assume  that the conclusion is not S5-valid. We assume without loss of generality that $ M,P_1;P_2\vdash Q_2;Q_1,N,\Box A $ stands for 
	$ M,P_1,\Diamond\bigwedge(P_2,\neg Q_2)\vdash Q_1, N,\Box A$. Therefore, there is a Kripke model $ \mathcal{M}=(W,R,V) $  such that 
	\begin{align}
	&\mathcal{M},w\vDash \bigwedge\left( M, P_1,\Diamond\bigwedge(P_2,\neg Q_2)\right)   \label{1} \\
	& \mathcal{M},w\nvDash \bigvee\left(  Q_1, N,\Box A\right),\label{2}
	\end{align}
	for some $ w\in W $.
	Also, since the premise is valid,	in all kripke models  like $ \mathcal{M} $ we have
	\begin{equation}\label{3}
	\text{If}\,\, \mathcal{M},w \vDash \bigwedge\left( M, \Diamond\bigwedge(P_2,\neg Q_2)\right)  \,\, \text{then}\,\,   \mathcal{M},w \vDash \bigvee\left( \Box\bigvee(\neg P_1,Q_1), N, A\right). 
	\end{equation}
	By (\ref{1}) and (\ref{3}), we have the following cases:
	\begin{itemize}
		\item[(a)]
		$ \mathcal{M},w\vDash\Box\bigvee(\neg P_1,Q_1) $
		\item[(b)]
		$ \mathcal{M},w\vDash \bigvee N $
		\item[(c)]
		$ \mathcal{M},w\vDash A $
	\end{itemize}
	The case (a) is a  contradiction with (\ref{1}) and (\ref{2}). The cases (b)  will contradicts with (\ref{2}). Finally, if   $ \mathcal{M},w\vDash A $, then we  will show that it is also a contradiction with (\ref{2}). Since  $ M $ is the multiset of modal formulas  by Lemma \ref{2.1},
	$ \mathcal{M},w' \vDash \bigwedge\left( M, \Diamond\bigwedge(P_2,\neg Q_2)\right)  $ for all $ w'\in W $, where $ wRw' $. Then  by (\ref{3}), we have
	$$  \mathcal{M},w' \vDash \bigvee\left( \Box\bigvee(\neg P_1,Q_1), N, A\right) $$
	If $  \mathcal{M},w' \vDash \bigvee\left( \Box\bigvee(\neg P_1,Q_1), N\right) $, then since $ N $  is the multiset of modal formulas again  by  Lemma \ref{2.1} 
	$$  \mathcal{M},w \vDash \bigvee\left( \Box\bigvee(\neg P_1,Q_1), N\right),$$
	which contradicts with (\ref{1}) and (\ref{2}).  So $ \mathcal{M},w'\vDash  A $ for all $ w'\in W $, where $ wRw' $,  hence 
	$ \mathcal{M},w\vDash \Box A $ which also contradicts with (\ref{2}).
\end{proof}

\section{Structural properties}\label{Structural properties}
In this section, we show that weakening and contraction rules   are admissible in {\rm G3{\scriptsize S5}}.  We remove the proofs of some lemmas since they are easy or similar
to the proofs in \cite{negri2008structural, bpt}.

A rule of $ \text{\rm G3{\scriptsize S5}}^;  $ is said to be height-preserving invertible if whenever an instance of its conclusion is
derivable in $ \text{\rm G3{\scriptsize S5}}^;  $ with height $ n $, then so is the corresponding instance of its premise(s).
\begin{Lem}[Inversion Lemma]	\label{inversion}
All rules of $ \text{\rm G3{\scriptsize S5}}^;  $,  with the exception of 
L$\Diamond $ and
R$\Box $,
are height-preserving invertible.
\end{Lem}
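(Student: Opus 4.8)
# Proof Plan for the Inversion Lemma

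The plan is to prove height-preserving invertibility by induction on the height $n$ of the derivation of the conclusion, handling the rules grouped according to their shape. First I would dispose of the purely propositional rules (L$\neg$, R$\neg$, L$\wedge$, R$\wedge$, L$\vee$, R$\vee$, L$\rightarrow$, R$\rightarrow$), which carry over from G3c essentially verbatim, since the middle parts $P$, $Q$ and their relatedness structure are untouched by these rules and the side formulas $M,N$ play no special role. For each such rule, in the base case the conclusion is an axiom (Ax or $\bot$-axiom), and one checks that the relevant premise is then also an axiom — here one uses that the principal formula of a propositional rule is not atomic and is not $\bot$, so removing/decomposing it leaves the axiom's active atom (or $\bot$) intact. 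In the induction step, one does a case distinction on the last rule $r$ applied to derive the conclusion: if the principal formula of $r$ is not the formula being inverted, one applies the induction hypothesis to the premise(s) of $r$ and reapplies $r$; if $r$ is the rule being inverted (so its principal formula is the one we want to decompose), the premise(s) of $r$ are exactly what we need, at height $n-1 < n$.

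Next I would treat R$\Diamond$ and L$\Box$. For R$\Diamond$, invertibility means: from $\Gamma;P\vdash Q;\Delta,\Diamond A$ at height $n$ derive $\Gamma;P\vdash Q;\Delta,\Diamond A,A$ at height $n$. The argument is again induction on $n$, but with a subtlety: one proves simultaneously (or as a preliminary height-preserving admissible rule) that weakening is height-preserving admissible for adding $A$ to the succedent when $\Diamond A$ is already present — actually the cleaner route, following \cite{negri2008structural, bpt}, is to prove height-preserving invertibility of R$\Diamond$ directly by induction, noting that when the last rule is R$\Diamond$ itself with principal formula $\Diamond A$ its premise already contains $A$, and when the last rule is R$\Box$ or L$\Diamond$ the formula $\Diamond A$ sits in the $N$-part (a modal side formula) and is copied to the premise unchanged, so the induction hypothesis applies and we reattach the rule. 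The rule L$\Box$ is symmetric: from $\Box A,\Gamma;P\vdash Q;\Delta$ at height $n$ derive $A,\Box A,\Gamma;P\vdash Q;\Delta$ at height $n$, by the same induction, using that $\Box A$ in an $M$-part is transmitted unchanged through R$\Box$ and L$\Diamond$.

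Finally I would handle L$\Diamond^{;}$ and R$\Box^{;}$, which are the genuinely new rules. Invertibility of L$\Diamond^{;}$ means: given a derivation of $M,P_1;P_2,P_3\vdash Q_3,Q_2;Q_1,N$ of height $n$ — equivalently, of the sequent with $\Diamond\bigwedge(P_2,\neg Q_2)$ displayed in the antecedent — one must produce a height-$n$ derivation of $M,P_2;P_1,P_3\vdash Q_3,Q_1;Q_2,N$, i.e.\ the sequent where instead the block $P_1,Q_1$ is folded into the $\Diamond$-formula and $P_2,Q_2$ is released into the atomic middle part. By induction on $n$: in the base case an axiom stays an axiom (the active atom of the axiom lies in $M,N$ or among the atomic parts and is preserved under this reshuffling of which atoms are ``stored''); in the step, since the only formulas that change are the corresponding $\Diamond$-formulas, and every rule other than L$\Diamond^{;}$/R$\Box^{;}$ treats these as inert modal side formulas in $M$, one pushes the transformation through to the premises by the induction hypothesis and reapplies the rule — when the last rule is L$\Diamond^{;}$ or R$\Box^{;}$ itself acting on the very block in question, one simply reads off the premise, possibly after one more application of the induction hypothesis to relate the two block-choices. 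The main obstacle, and the step deserving the most care, is exactly this bookkeeping of partitions and relatedness: one must verify that the correspondence between displayed atomic-sequent form and the underlying $\Box$/$\Diamond$-formula form is respected at every inference, so that applying the induction hypothesis to a premise yields a sequent that still admits the last rule with the same principal formula; this is routine but notation-heavy, and is precisely why the authors defer to \cite{negri2008structural, bpt} for the propositional cases and concentrate the writing on the modal rules.
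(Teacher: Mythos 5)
Your overall strategy --- induction on the height of the derivation, splitting on the last rule --- is exactly the induction the paper invokes (its entire proof is that one sentence), and your handling of the propositional rules and of L$\Diamond^; $/R$\Box^; $ is essentially right. For the propositional rules the key observation, which you use implicitly and should state, is that the conclusion of R$\Box $, L$\Diamond $, L$\Diamond^; $ or R$\Box^; $ has outer antecedent and succedent consisting of modal and atomic formulas only; hence a sequent carrying a compound non-modal formula in an outer part can never be the conclusion of one of these rules, and the inversion of a propositional rule never has to be pushed through them.

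The genuine gap is in your R$\Diamond $ and L$\Box $ cases. You claim that when the last rule is R$\Box $ or L$\Diamond $ the formula $\Diamond A$ (resp.\ $\Box A$) lies in the modal context $N$ (resp.\ $M$), is copied unchanged to the premise, ``so the induction hypothesis applies and we reattach the rule.'' The reattachment fails: the induction hypothesis inserts the immediate subformula $A$, an arbitrary formula, into the outer succedent (resp.\ antecedent) of that premise, and R$\Box $ and L$\Diamond $ demand that every non-principal formula there be modal or atomic. Worse, the height-preserving statement itself fails at this point: $\vdash\Diamond(q\wedge q),\Box(p\vee\neg p)$ has a derivation of height $3$ (Ax, R$\neg $, R$\vee $, then R$\Box $ with $\Diamond(q\wedge q)$ in $N$), while $\vdash\Diamond(q\wedge q),q\wedge q,\Box(p\vee\neg p)$ requires height $4$: R$\Box $ is blocked by the context formula $q\wedge q$, and the remaining candidate last rules (R$\wedge $, R$\Diamond $) lead, by an exhaustive check, to non-axiomatic sequents at height $0$ unless an extra R$\wedge $ is spent below the R$\Box $. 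So for R$\Diamond $ and L$\Box $ only plain invertibility (derivability, without the height bound) can be rescued, e.g.\ via Lemma \ref{*} together with admissible contraction, and every later appeal to Lemma \ref{inversion} inside a height induction needs to be re-examined. The paper's one-line proof does not confront this difficulty either; your more explicit write-up is simply where the false step becomes visible.
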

\begin{proof}
The proof is by induction on the height of derivations.
\end{proof}
\begin{Lem} \label{W}
The rule of weakening,
	\begin{prooftree}
		\AXC{$ \Gamma;P\vdash Q;\Delta $}
		\RL{W,}
		\UIC{$ \Gamma',\Gamma;P',P\vdash Q,Q';\Delta,\Delta' $}
	\end{prooftree}
is admissible,
where 
$ \Gamma' $ and 
$ \Delta' $ are
multisets of arbitrary formulas, and 	$ P' $ and 
$ Q' $ are 
multisets of atomic formulas.
\end{Lem}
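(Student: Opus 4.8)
The plan is to argue by induction on the height $n$ of the given derivation of $\Gamma;P\vdash Q;\Delta$ in $\text{\rm G3{\scriptsize S5}}^;$, following the standard pattern for G3-style calculi (cf.~\cite{negri2008structural}). In the base case the endsequent is an initial sequent, i.e. an instance of Ax of the form $p,\Gamma_0;P\vdash Q;\Delta_0,p$ or of the form $\bot,\Gamma_0;P\vdash Q;\Delta_0$; adjoining $\Gamma',P',Q',\Delta'$ to the appropriate places yields again an initial sequent of the same kind, derivable with height $0$. So the whole content lies in the inductive step.

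For the inductive step I would split on the last rule $R$. For every rule other than L$\Diamond$ and R$\Box$ — namely the propositional rules, L$\Box$, R$\Diamond$, and the two rules L$\Diamond^{;}$ and R$\Box^{;}$ — the weakening multisets lie entirely in the passive context: $R$ neither inspects nor rearranges them, so one applies the induction hypothesis to each premise with $\Gamma',P',Q',\Delta'$ adjoined and then reapplies $R$. The only bookkeeping is with the middle parts: the new atoms of $P'$ and $Q'$ are attached to the underlying partition as fresh singleton blocks (with corresponding formulas $\Diamond p$ for $p\in P'$ and $\Box q$ for $q\in Q'$), and since this choice is inherited unchanged by the premises of $R$, no relatedness constraint is disturbed and the height is preserved.

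The genuinely delicate cases are L$\Diamond$ and R$\Box$, which carry the side condition that the non-principal non-atomic part of the context ($M$ on the left, $N$ on the right) be a multiset of modal formulas. If $\Gamma'$ and $\Delta'$ are themselves modal there is no problem: absorb them into $M$ and $N$, apply the induction hypothesis to the premise (the atoms $P_1,P'$ passing into the middle exactly as the rule prescribes), and reapply the same rule with the same principal formula $\Diamond A$ (resp. $\Box A$), again height-preservingly. The obstacle — which I expect to be the only non-routine point — is a weakening formula $B$ whose main connective is one of $\neg,\wedge,\vee,\rightarrow$ occurring in $\Gamma'$ or $\Delta'$: such a $B$ is not allowed in the context of an L$\Diamond$ or R$\Box$ instance, so the weakened sequent is simply not a conclusion of that rule and no direct reapplication is available.

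I would resolve this by an auxiliary induction on the complexity of $B$, reducing weakening by $B$ to weakening by its immediate subformulas followed by one application of the matching propositional rule: e.g. to add $B_1\wedge B_2$ on the left, first add $B_1$ and $B_2$ and then apply L$\wedge$; to add $B_1\vee B_2$ on the left, weaken the two premises of L$\vee$ by $B_1$ and by $B_2$ respectively; to add $\neg B_1$ on the left, add $B_1$ on the right and apply L$\neg$; and symmetrically on the right. Atomic subformulas produced along the way (and the constants $\bot,\top$, treated as further base cases) are adjoined to the outer context directly — an atom in the $P_1$-slot of L$\Diamond$ is legitimate, the corresponding premise being obtained from the induction hypothesis by a weakening by that atom in the middle. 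This costs height, so the resulting weakening is admissible but, for non-modal $\Gamma'$ and $\Delta'$, not height-preserving — which is exactly what the lemma claims; everything else is the usual G3 bookkeeping.
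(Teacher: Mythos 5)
Your proposal is correct and follows essentially the same route as the paper: a main induction on the height of the derivation, routine reapplication of the last rule in the non-modal cases, and for L$\Diamond$/R$\Box$ a subinduction on the complexity of the weakening formulas that reduces everything to the case where $\Gamma'$ and $\Delta'$ are atomic or modal, with modal formulas absorbed into $M$, $N$ and atoms inserted into the middle part of the premise so that the modal rule moves them into place. The paper states this subinduction and only writes out the atomic-plus-modal base case (in its two subcases for the principal formula), which is exactly the reduction you make explicit.
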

\begin{proof}
{\rm
The proof is by induction on the height of derivation of  the premise.   Let $ \D $ be a derivation of  
$ \Gamma;P\vdash Q;\Delta$. We consider only the case  when the last rule in
$ \D $
is 
R$\Box $, since   
L$\Diamond $  is treated symmetrically. For the remaining rules  it is sufficient to apply the induction hypothesis to the premise(s) and
then use the same rule to obtain conclusion.
		
Let the last rule  be R$\Box $. The proof is by subinduction on the  complexity of formulas in $ \Gamma' $ and $ \Delta' $. We just prove the case when 
$ \Gamma' $ and $ \Delta' $ contains only atomic and modal formulas, since  derivations of other cases are constructed by this case. So suppose that 
$ \Gamma'= M'_1,P'_1$ and $ \Delta'=N'_1,Q'_1 $,
where $ M'_1 $ and $ N'_1 $ are multisets of modal formulas, and $ P'_1 $ and $ Q'_1$ are multisets of atomic formulas.
There are two subcases, according to the position of the principal formula.
		 	
Subcase 1.	Let
$ \Gamma=M_1,P_1 $ and $ \Delta=Q_1,N_1, \Box A $,  and let
$\Box A $ be the principal formula. Assume $ \D $ is as 
		\begin{prooftree}
			\AXC{$ \D_1 $}
			\noLine
			\UIC{$M_1;P_1,P\vdash Q,Q_1;N_1, A$}
			\RL{R$\Box $.}
			\UIC{$M_1,P_1;P\vdash Q;Q_1,N_1,\Box A $}
		\end{prooftree}
Then we have
		\begin{prooftree}
			\AXC{$ \D_1 $}
			\noLine
			\UIC{$M_1;P_1,P\vdash Q,Q_1;N_1, A$}
			\RL{IH}
			\UIC{$M'_1,M_1;P'_1,P_1,P',P\vdash Q,Q',Q_1,Q'_1;N_1, A,N'_1$}
			\RL{R$\Box $.}
			\UIC{$M'_1,P'_1,M_1,P_1;P',P\vdash Q,Q';Q_1,N_1, \Box A,Q'_1,N'_1$}
		\end{prooftree}
	
Subcase 2.		Let
	$ \Gamma=M_1,P_1 $ and $ \Delta=Q_1,N_1 $,  and let 
	$\Diamond\bigwedge( P_2,\neg Q_2) $ be the principal formula, where $ P=P_2,P_3 $ and $ Q=Q_3,Q_2 $. Assume $ \D $ is as 
		\begin{prooftree}
			\AXC{$ \D_1 $}
			\noLine
			\UIC{$M_1,P_2;P_1,P_3\vdash Q_3,Q_1;Q_2,N_1 $}
			\RL{R$\Box^; $.}
			\UIC{$M_1,P_1;P_2,P_3\vdash Q_3,Q_2;Q_1,N_1 $}
		\end{prooftree}
Then we have
		\begin{prooftree}
			\AXC{$ \D_1 $}
			\noLine
			\UIC{$M_1,P_2;P_1,P_3\vdash Q_3,Q_1;Q_2,N_1 $}
			\RL{IH}
			\UIC{$M'_1,M_1,P_2;P'_1,P_1,P_3\vdash Q_3,Q_1,Q'_1;Q_2,N_1,N'_1 $}
			\RL{R$\Box^; $.}
				\UIC{$M'_1,P'_1,M_1,P_1;P_2,P_3\vdash Q_3,Q_2;Q_1,N_1,Q'_1,N'_1 $}
		\end{prooftree}
	}
\end{proof}
In order to prove the admissibility of the contraction and the cut rules, we need  to state some properties in the following lemmas which are also required to prove  the admissibility of  the general versions of the rules R$\Box $ and L$\Diamond $ where the constriction of atomic for formulas has been removed.
\begin{Lem}\label{lem 3.8}
	The following rules are  admissible.
		\begin{align*}
		&\AXC{$(A\vee B)\wedge C,\Gamma\vdash\Delta$}
		\LL{{\rm (1)}}
		\UIC{$A\wedge C,\Gamma\vdash\Delta $}
		\DP
		&&
		\AXC{$\Gamma\vdash\Delta,\neg(A\vee B)\vee C$}
		\LL{{\rm (2)}}
		\UIC{$ \Gamma\vdash\Delta, \neg A\vee C $}
		\DP
		\\[0.15cm]
		&\AXC{$(A\vee B)\wedge C,\Gamma\vdash\Delta$}
		\LL{{\rm (3)}}
		\UIC{$B\wedge C,\Gamma\vdash\Delta$}
		\DP
		&&
		\AXC{$\Gamma\vdash\Delta,\neg(A\vee B)\vee C$}
		\LL{{\rm (4)}}
		\UIC{$\Gamma\vdash\Delta,\neg B\vee C $}
		\DP
		\\[0.15cm]
		&\AXC{$\neg (A\vee B)\wedge C,\Gamma\vdash\Delta$}
		\LL{{\rm (5)}}
		\UIC{$(\neg A\wedge \neg B)\wedge C,\Gamma\vdash\Delta$}
		\DP
		&&
	\AXC{$\Gamma\vdash\Delta,\neg(A\wedge B)\vee C$}
	\LL{{\rm (6)}}
	\UIC{$\Gamma\vdash\Delta,(\neg A\vee \neg B)\vee C $}
	\DP
	\\[0.15cm]
	&\AXC{$\neg (A\wedge B)\wedge C,\Gamma\vdash\Delta$}
	\LL{{\rm (7)}}
	\UIC{$\neg A\wedge C,\Gamma\vdash\Delta$}
	\DP
	&&
	\AXC{$\Gamma\vdash\Delta,(A\wedge B)\vee C$}
	\LL{{\rm (8)}}
	\UIC{$\Gamma\vdash\Delta, A\vee  C $}
	\DP
	\\[0.15cm]
	&\AXC{$\neg (A\wedge B)\wedge C,\Gamma\vdash\Delta$}
	\LL{{\rm (9)}}
	\UIC{$\neg B\wedge C,\Gamma\vdash\Delta$}
	\DP
	&&
	\AXC{$\Gamma\vdash\Delta,(A\wedge B)\vee C$}
	\LL{{\rm (10)}}
	\UIC{$\Gamma\vdash\Delta, B\vee  C $}
	\DP
     \\[0.15cm]
    &\AXC{$(A\rightarrow B)\wedge C,\Gamma\vdash\Delta$}
    \LL{{\rm (11)}}
    \UIC{$B\wedge C,\Gamma\vdash\Delta $}
    \DP
    &&
    \AXC{$\Gamma\vdash\Delta, \neg(A\rightarrow B)\vee C$}
\LL{{\rm (12)}}
\UIC{$\Gamma\vdash B\vee C,\Delta$}
\DP
\\[0.15cm]
&\AXC{$(A\rightarrow B)\wedge C,\Gamma\vdash\Delta$}
\LL{{\rm (13)}}
\UIC{$\neg A\wedge C,\Gamma\vdash\Delta $}
\DP
&&
\AXC{$\Gamma\vdash\Delta, \neg(A\rightarrow B)\vee C$}
\LL{{\rm (14)}}
\UIC{$\Gamma\vdash \neg A\vee C,\Delta$}
\DP
\\[0.15cm]
&\AXC{$\neg (A\rightarrow B)\wedge C,\Gamma\vdash\Delta$}
\LL{{\rm (15)}}
\UIC{$  (A\wedge \neg B)\wedge C,\Gamma\vdash\Delta$}
\DP
&&
\AXC{$\Gamma\vdash\Delta,(A\rightarrow B)\vee C$}
\LL{{\rm (16)}}
\UIC{$\Gamma\vdash\Delta,(\neg A\vee B)\vee C$}
\DP
	\end{align*}
	\end{Lem}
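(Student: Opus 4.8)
The plan is to handle all sixteen items by a single uniform recipe. In each case the conclusion differs from the premise only in one active formula, which occurs in the antecedent (items (1), (3), (5), (7), (9), (11), (13), (15)) or in the succedent (the even-numbered items), and which is replaced by a formula built from the same propositional ingredients $A$, $B$, $C$. Since none of these rules involves the middle parts, we may treat the sequents as ordinary two-sided sequents and work entirely with the propositional rules of $\text{\rm G3{\scriptsize S5}}^;$, all of which are height-preserving invertible by Lemma~\ref{inversion}.

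First I would take a derivation of the premise and apply the Inversion Lemma repeatedly to the active formula, stripping off its outermost connective at each step, until the components $A,B,C$ (and the relevant negations) have been exposed as side formulas. For an antecedent active formula $F\wedge C$ this first yields a derivation of $F,C,\Gamma\vdash\Delta$; a further inversion on the main connective of $F$ then produces the subderivations we need --- inverting L$\vee$ on $A\vee B$ gives derivations of both $A,C,\Gamma\vdash\Delta$ and $B,C,\Gamma\vdash\Delta$; inverting L$\rightarrow$ on $A\rightarrow B$ gives $C,\Gamma\vdash\Delta,A$ and $B,C,\Gamma\vdash\Delta$; inverting L$\neg$ on a negated component moves it to the succedent, where one more inversion exposes $A$ (and $B$). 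For a succedent active formula of the shape $\neg(\cdots)\vee C$ or $(\cdots)\vee C$ one first inverts R$\vee$ to split off $C$, then (if present) R$\neg$ to move the inner formula into the antecedent, and then proceeds exactly as above.

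Second, from the subderivation whose side formulas match the target, I would rebuild the new active formula by applying the corresponding introduction rules downward: L$\wedge$ or R$\vee$ to reassemble the conjunction (respectively disjunction) with $C$, and L$\neg$ or R$\neg$ to reintroduce negations. For instance: for (1), invert L$\wedge$ then L$\vee$ to get $A,C,\Gamma\vdash\Delta$, then apply L$\wedge$ to obtain $A\wedge C,\Gamma\vdash\Delta$; for (5), invert L$\wedge$, L$\neg$, R$\vee$ to get $C,\Gamma\vdash\Delta,A,B$, then apply L$\neg$ twice and L$\wedge$ twice to obtain $(\neg A\wedge\neg B)\wedge C,\Gamma\vdash\Delta$; for (15), invert L$\wedge$, L$\neg$, R$\rightarrow$ to get $A,C,\Gamma\vdash\Delta,B$, then apply L$\neg$ and L$\wedge$ twice to obtain $(A\wedge\neg B)\wedge C,\Gamma\vdash\Delta$; the succedent items are dual (e.g. for (8): invert R$\vee$, R$\wedge$ to get $\Gamma\vdash\Delta,A,C$, then apply R$\vee$). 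Weakening (Lemma~\ref{W}) may be invoked to drop any component not needed, although a case check shows the ingredients of the new active formula always occur among those produced by the inversions, so in fact weakening is not required.

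There is no genuine obstacle here; the only thing that needs care is the bookkeeping --- for each of the sixteen items one must choose the correct chain of inversions and, for the rules L$\vee$, R$\wedge$ and L$\rightarrow$ (which have two premises), select the branch whose side formulas rebuild the target formula. Since Lemma~\ref{inversion} covers every propositional rule of $\text{\rm G3{\scriptsize S5}}^;$ and the conclusions are obtained by finitely many applications of ordinary propositional rules, admissibility follows in every case.
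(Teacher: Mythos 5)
The paper does not actually prove this lemma (it is among those whose proofs are ``removed ... since they are easy''), so there is no official argument to match against; but your inversion-then-rebuild recipe is correct in substance and is essentially the intended one. Since every active formula here is a compound propositional formula, its only possible principal rule is one of the invertible propositional rules, so Lemma~\ref{inversion} lets you strip it down to $A$, $B$, $C$ (with negations resolved by L$\neg$/R$\neg$ inversion) and then reassemble the target with L$\wedge$, R$\vee$, L$\neg$, R$\neg$; this is just a packaged form of the induction on derivation height that the paper carries out explicitly for the analogous modal version, Lemma~\ref{ezaf1}. Your remark that the middle parts are untouched and merely carried along is the right observation, and weakening is indeed never needed.

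One concrete caveat: your claim that ``a case check shows the ingredients of the new active formula always occur among those produced by the inversions'' fails for items (12) and (14) exactly as printed. Inverting R$\vee$, R$\neg$ and then L$\rightarrow$ on $\Gamma\vdash\Delta,\neg(A\rightarrow B)\vee C$ yields derivations of $\Gamma\vdash\Delta,C,A$ and of $B,\Gamma\vdash\Delta,C$, from which one rebuilds $\Gamma\vdash\Delta,A\vee C$ and $\Gamma\vdash\Delta,\neg B\vee C$ --- not the printed conclusions $\Gamma\vdash\Delta,B\vee C$ and $\Gamma\vdash\Delta,\neg A\vee C$. Indeed the printed rules (12) and (14) are not admissible at all: $\neg(A\rightarrow B)\vee C$ is equivalent to $(A\vee C)\wedge(\neg B\vee C)$, and e.g.\ $p\vdash\neg(p\rightarrow\bot)\vee\bot$ is derivable while $p\vdash\bot\vee\bot$ is not. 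This is a typo in the statement (the negations in (12) and (14) are swapped; the corrected versions are what Lemma~\ref{NF} actually needs for the CNF of $\Box A$), not a flaw in your method --- but it is precisely the case check you waved off that exposes it, so it should be carried out rather than asserted.
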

\begin{Lem}\label{ezaf1}
The following rules are  admissible.
\begin{align*}
		&\AXC{$\Diamond((A\vee B)\wedge C),\Gamma\vdash\Delta$}
		\LL{{\rm (1)}}
		\UIC{$\Diamond(A\wedge C),\Gamma\vdash\Delta $}
		\DP
		&&
		\AXC{$\Gamma\vdash\Delta,\Box(\neg(A\vee B)\vee C)$}
		\LL{{\rm (2)}}
		\UIC{$ \Gamma\vdash \Box(\neg A\vee C),\Delta $}
		\DP
		\\[0.15cm]
		&\AXC{$\Diamond((A\vee B)\wedge C),\Gamma\vdash\Delta$}
		\LL{{\rm (3)}}
		\UIC{$\Diamond(B\wedge C),\Gamma\vdash\Delta$}
		\DP
		&&\AXC{$\Gamma\vdash\Delta,\Box(\neg(A\vee B)\vee C)$}
		\LL{{\rm (4)}}
		\UIC{$\Gamma\vdash\Delta,\Box(\neg B\vee C) $}
		\DP
		\\[0.15cm]
		&\AXC{$\Diamond(\neg (A\vee B)\wedge C),\Gamma\vdash\Delta$}
		\LL{{\rm (5)}}
		\UIC{$\Diamond((\neg A\wedge \neg B)\wedge C),\Gamma\vdash\Delta$}
		\DP
		&&
		\AXC{$\Gamma\vdash \Delta,\Box(\neg(A\wedge B)\vee C)$}
		\LL{{\rm (6)}}
		\UIC{$\Gamma\vdash \Delta,\Box(\neg A\vee \neg B)\vee C) $}
		\DP
		\\[0.15cm]
		&\AXC{$\Diamond(\neg (A\wedge B)\wedge C),\Gamma\vdash\Delta$}
		\LL{{\rm (7)}}
		\UIC{$\Diamond(\neg A\wedge C,\Gamma)\vdash\Delta$}
		\DP
		&&
		\AXC{$\Gamma\vdash \Delta,\Box((A\wedge B)\vee C)$}
		\LL{{\rm (8)}}
		\UIC{$\Gamma\vdash \Delta, \Box(A\vee  C) $}
		\DP
		\\[0.15cm]
		&\AXC{$\Diamond(\neg (A\wedge B)\wedge C),\Gamma\vdash\Delta$}
		\LL{{\rm (9)}}
		\UIC{$\Diamond(\neg B\wedge C),\Gamma\vdash\Delta$}
		\DP
		&&
		\AXC{$\Gamma\vdash\Delta,\Box((A\wedge B)\vee C)$}
		\LL{{\rm (10)}}
		\UIC{$\Gamma\vdash \Delta, \Box(B\vee  C) $}
		\DP
        \\[0.15cm]
		&\AXC{$\Diamond((A\rightarrow B)\wedge C),\Gamma\vdash\Delta$}
		\LL{{\rm (11)}}
		\UIC{$\Diamond(B\wedge C),\Gamma\vdash\Delta $}
		\DP
		&&\AXC{$\Gamma\vdash \Delta,\Box(\neg(A\rightarrow B)\vee C)$}
		\LL{{\rm (12)}}
		\UIC{$\Gamma\vdash \Delta, \Box(B\vee C)$}
		\DP
		\\[0.15cm]
		&\AXC{$\Diamond((A\rightarrow B)\wedge C),\Gamma\vdash\Delta$}
		\LL{{\rm (13)}}
		\UIC{$\Diamond(\neg A\wedge C),\Gamma\vdash\Delta $}
		\DP
		&&\AXC{$\Gamma\vdash\Delta, \Box(\neg(A\rightarrow B)\vee C)$}
		\LL{{\rm (14)}}
		\UIC{$\Gamma\vdash\Delta, \Box(\neg A\vee C)$}
		\DP
		\\[0.15cm]
		&\AXC{$\Diamond(\neg (A\rightarrow B)\wedge C),\Gamma\vdash\Delta$}
		\LL{{\rm (15)}}
		\UIC{$  \Diamond(A\wedge (\neg B\wedge C)),\Gamma\vdash\Delta$}
		\DP
		&&
		\AXC{$\Gamma\vdash\Delta,\Box((A\rightarrow B)\vee C)$}
		\LL{{\rm (16)}}
		\UIC{$\Gamma\vdash\Delta,\Box(\neg A\vee B)\vee C)$}
		\DP
		\\[0.15cm]
		&\AXC{$\Diamond(\Diamond A\wedge C),\Gamma\vdash\Delta$}
		\LL{{\rm (17)}}
		\UIC{$\Diamond A,\Diamond C,\Gamma\vdash\Delta$}
		\DP
		&&
		\AXC{$\Gamma\vdash\Delta,\Box(\neg \Diamond A\vee C)$}
		\LL{{\rm (18)}}
		\UIC{$\Diamond A,\Gamma\vdash\Delta, \Box C$}
		\DP
		\\[0.15cm]
		&\AXC{$\Diamond(\Box A\wedge C),\Gamma\vdash\Delta$}
		\LL{{\rm (19)}}
		\UIC{$\Box A,\Diamond C,\Gamma\vdash\Delta$}
		\DP
		&&\AXC{$\Gamma\vdash\Delta,\Box(\neg \Box A\vee C)$}
		\LL{{\rm (20)}}
		\UIC{$\Box A,\Gamma\vdash\Delta, \Box C$}
		\DP
		\\[0.15cm]
		&\AXC{$\Diamond(\neg\Diamond A\wedge C),\Gamma\vdash\Delta$}
		\LL{{\rm (21)}}
		\UIC{$\Diamond C,\Gamma\vdash\Delta,\Diamond   A $}
		\DP
		&&
		\AXC{$\Gamma\vdash\Delta,\Box(\Diamond A\vee C)$}
		\LL{{\rm (22)}}
		\UIC{$\Gamma\vdash\Delta, \Diamond A,\Box C$}
		\DP
		\\[0.15cm]
		&\AXC{$\Diamond(\neg \Box A\wedge C),\Gamma\vdash\Delta$}
		\LL{{\rm (23)}}
		\UIC{$\Diamond C,\Gamma\vdash\Delta,\Box A$}
		\DP
		&&
		\AXC{$\Gamma\vdash\Delta,\Box(\Box A\vee C)$}
		\LL{{\rm (24)}}
		\UIC{$\Gamma\vdash\Delta, \Box A,\Box C$}
		\DP
	\end{align*}
\end{Lem}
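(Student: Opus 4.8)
The plan is to prove all twenty-four items simultaneously by induction on the height of a derivation of the premise; the antecedent items (left column, with a principal $\Diamond$-formula) and the succedent items (right column, with a principal $\Box$-formula) are handled by exactly the same method, so I would write out only the antecedent cases. The governing observation is that in each item the ``active'' compound formula is modal --- $\Diamond(\cdots)$ on the left, $\Box(\cdots)$ on the right --- and the formulas that replace it in the conclusion, be it a single $\Diamond(\cdots)$ or a pair such as $\Diamond A,\Diamond C$, are again modal. Hence in any derivation the active formula resides in the modal-formula part ($M$ or $N$), never enters the middle atomic part, leaves the partition and relatedness of the middle part untouched, and passes through every inference as a side formula, with the single exception of an application of L$\Diamond$ (resp.\ R$\Box$) in which it is principal.

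First I would dispose of the base case: the active formula, being neither atomic nor $\bot$, can only occur as a side formula of an axiom, so replacing it still yields an axiom. Next, the induction step when the last rule does not have the active formula as principal: apply the induction hypothesis to the premise(s) and reapply the same rule. This is legitimate precisely because the substitution touches only modal formulas sitting in $M$/$N$, so it cannot interfere with any rule application, including L$\Diamond^;$ and R$\Box^;$ --- the active formula, built from arbitrary $A,B,C$, is in general not of the restricted shape $\Diamond\bigwedge(\cdots)$ or $\Box\bigvee(\cdots)$ required to be principal in those rules, and is treated there as an ordinary side formula.

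The substance of the proof lies in the remaining case, where the last rule is L$\Diamond$ with the $\Diamond$-formula principal. For items (1)--(16) the premise then has the form $\psi_1,M;P_1,P_2\vdash Q_2,Q_1;N$, where $\psi_1$ is the inner formula (e.g.\ $(A\vee B)\wedge C$ for item (1)); the matching item of Lemma \ref{lem 3.8} converts this into a derivation of $\psi_2,M;P_1,P_2\vdash Q_2,Q_1;N$, and a single fresh application of L$\Diamond$ with $\Diamond\psi_2$ principal produces the desired conclusion. For items (17)--(24) there is no companion in Lemma \ref{lem 3.8}; instead I would use the height-preserving invertibility of the propositional rules (Lemma \ref{inversion}) to break up the conjunction inside the modality --- for item (17), inverting L$\wedge$ sends the premise $\Diamond A\wedge C,M;P_1,P_2\vdash Q_2,Q_1;N$ to $\Diamond A,C,M;P_1,P_2\vdash Q_2,Q_1;N$ --- then observe that the liberated subformula ($\Diamond A$, $\Box A$, or, after inverting L$\neg$/R$\neg$, a modal formula moved to the opposite side of the turnstile) is itself modal, absorb it into the modal context, and apply L$\Diamond$ with the smaller principal formula $\Diamond C$; the absorbed formula then resurfaces in the modal part of the conclusion, giving exactly $\Diamond A,\Diamond C,\Gamma\vdash\Delta$. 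The succedent items are dual, using R$\Box$, the invertibility of R$\vee$ and R$\neg$, and the even-numbered items of Lemma \ref{lem 3.8}.

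The hard part is not any individual case but the bookkeeping: correctly pairing each of the sixteen ``propositional'' items with its counterpart in Lemma \ref{lem 3.8}, and isolating the eight ``modal'' items (17)--(24), which have no such counterpart and require the extra move of extracting a modal subformula from inside a modality and placing it in the context before L$\Diamond$ / R$\Box$ can be reapplied; one must also keep careful track, in the semicolon notation, that no step disturbs the partition of the middle atomic part, which it does not since atomic formulas are never touched.
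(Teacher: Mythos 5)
Your proposal is correct and follows essentially the same route as the paper: induction on the height of the derivation of the premise, with the axiom and non-principal cases handled by direct replacement and reapplication of the last rule, and the principal L$\Diamond$/R$\Box$ case discharged by the matching item of Lemma \ref{lem 3.8} followed by a fresh application of the modal rule. The paper only works out item (1) ``as a typical example,'' so your explicit treatment of the modal items (17)--(24) --- inverting L$\wedge$ (and L$\neg$/R$\neg$ where needed) inside the premise of the principal L$\Diamond$, absorbing the liberated modal subformula into the modal context, and reapplying L$\Diamond$ with the smaller principal formula --- correctly supplies a detail the paper leaves implicit.
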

\begin{proof}
The proof is by induction on the height of derivation of the premise in each case. As a typical example, we prove $ (1) $. 
If $\Diamond((A\vee B)\wedge C),\Gamma\vdash\Delta$
is an axiom, then
$ \Diamond((A\vee B)\wedge C) $
is not principal, and 
$ \Diamond(A\wedge C),\Gamma\vdash\Delta $
is an axiom. If  
$ \Diamond((A\vee B)\wedge C) $
in not principal, we apply the induction hypothesis to the premise and
then use the same rule to obtain deductions of
$ \Diamond(A\wedge C),\Gamma\vdash\Delta $. For example if   $ \Delta=N,Q,\Box D  $, $ \,\, \Gamma=M,P $ and the last rule  is
	  \begin{prooftree}
	  	\AXC{$ \D $}
	  	\noLine
	  	\UIC{$ \Diamond((A\vee B)\wedge C),M;P\vdash Q;N,D $}
	  	\RL{R$\Box $,}
	  	\UIC{$ \Diamond((A\vee B)\wedge C),M,P\vdash Q,N,\Box D $}
	  \end{prooftree}
then we have
	  \begin{prooftree}
	  	\AXC{$ \D $}
	  	\noLine
	  	\UIC{$ \Diamond((A\vee B)\wedge C),M;P\vdash Q;N,D $}
	  	\RL{ IH }
	  	\UIC{$ \Diamond(A\wedge C),M;P\vdash Q;N,D $}
	  	\RL{R$\Box $.}
	  	\UIC{$ \Diamond(A\wedge C),M,P\vdash Q,N,\Box D $}
	  \end{prooftree}
If on the other hand 
$ \Diamond((A\vee B)\wedge C) $
is principal, the last  rule is: 
	\begin{prooftree}
		\AXC{$ \D $}
		\noLine
		\UIC{$ (A\vee B)\wedge C,M;P\vdash Q;N $}
		\RL{L$\Diamond $,}
		\UIC{$ \Diamond((A\vee B)\wedge C),M,P\vdash Q,N $}
		\end{prooftree}
then we have
	\begin{prooftree}
		\AXC{$ \D $}
		\noLine
		\UIC{$ (A\vee B)\wedge C,M;P\vdash Q;N$ }
		\RL{Lemma \ref{lem 3.8}}
		\UIC{$ A\wedge C,M;P\vdash Q;N $}
		\RL{L$\Diamond $.}
		\UIC{$ \Diamond(A\wedge C),M,P\vdash Q,N $}
		\end{prooftree}
\end{proof}
\begin{Lem}\label{*}
The following rules are admissible.
	\begin{center}
		\AXC{$ \Diamond A,\Gamma\vdash \Delta $}
		\LL{{\rm (\rom{1})}}
		\UIC{$ A,\Gamma\vdash \Delta $}
		\DP
		$ \qquad $
		\AXC{$ \Gamma\vdash\Delta,\Box A $}
		\LL{\rm (\rom{2})}
		\UIC{$\Gamma\vdash\Delta, A $}
		\DP
	\end{center}
\end{Lem}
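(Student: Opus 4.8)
The plan is to prove (I) and (II) \emph{simultaneously}, by induction on the height $n$ of the derivation of the premise; the two statements are dual (left $\Diamond$ versus right $\Box$) and their proofs run in parallel. When $n=0$ the premise is an axiom, and since the principal formula of an axiom is atomic or $\bot$, the tracked formula $\Diamond A$ (resp.\ $\Box A$) occurs only as a side formula, so replacing it by $A$ leaves the sequent an axiom. For $n>0$ one splits on whether the tracked formula is principal in the last rule applied.

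The principal cases are exactly where the two parts feed each other. For (I), suppose the last rule is L$\Diamond$ with $\Diamond A$ principal; then $\Gamma\vdash\Delta$ has the form $M,P\vdash Q,N$ and the premise $A,M\vdash\Box\bigvee(\neg P,Q),N$ is derivable at height $n-1$. Applying part (II) of the induction hypothesis strips the outer $\Box$, giving $A,M\vdash\bigvee(\neg P,Q),N$; then height-preserving invertibility of R$\vee$ and R$\neg$ (Lemma~\ref{inversion}) moves the atoms of $P$ back to the antecedent and those of $Q$ to the succedent, producing precisely $A,M,P\vdash Q,N$ (with the trivial observation that $\bot$ on the right, never being principal, can always be removed, covering the degenerate case $P=Q=\varnothing$). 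Dually, for (II) when the last rule is R$\Box$ introducing $\Box A$, the premise $M,\Diamond\bigwedge(P,\neg Q)\vdash N,A$ is handled by part (I) of the induction hypothesis (to remove the $\Diamond$) followed by the invertibility of L$\wedge$ and L$\neg$. In both subcases the hypothesis is used on the premise, whose height is smaller, so the simultaneous induction is well founded; in fact the transformation does not increase the height.

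For the non-principal cases the tracked formula survives in every premise, and for axioms, all the propositional rules, and L$\Box$ and R$\Diamond$, one simply applies the induction hypothesis to the premise(s) and reapplies the same rule. The one delicate point, which is the main obstacle, is when the tracked $\Diamond A$ (resp.\ $\Box A$) appears as a side formula inside the modal multiset $M$ (resp.\ $N$) of an application of L$\Diamond$, R$\Box$, L$\Diamond^;$ or R$\Box^;$: the side condition of these rules requires $M$ (resp.\ $N$) to consist only of modal formulas, so after replacing $\Diamond A$ by $A$ the rule need no longer apply when $A$ is not itself modal. I would resolve this by a secondary induction on the complexity of $A$: when $A$ is modal the rule reapplies unchanged; when the main connective of $A$ is propositional, I peel it off using the Inversion Lemma together with the matching admissible rules of Lemma~\ref{ezaf1} (the $\Diamond$-of-a-compound and $\Box$-of-a-compound clauses, and their obvious variants), reducing to the immediate subformulas of $A$; the base case, $A$ atomic, is then argued directly, Lemma~\ref{W} being used to adjust side contexts where needed. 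Checking that each clause of Lemma~\ref{ezaf1} has the shape required and that this nested induction creates no new occurrences of the tracked formula is the only tedious part of the argument.
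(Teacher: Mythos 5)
Your overall strategy---a simultaneous induction on the height of the derivation of the premise, with parts (\rom{1}) and (\rom{2}) feeding each other---is genuinely different from the paper's, and your principal case is correct and rather elegant: when L$\Diamond$ introduces $\Diamond A$, stripping the storage formula $\Box\bigvee(\neg P,Q)$ in the premise with part (\rom{2}) of the induction hypothesis and then inverting R$\vee$ and R$\neg$ does recover $A,M,P\vdash Q,N$ (and dually for R$\Box$). You have also correctly located the real obstacle: the tracked formula sitting as a side formula inside the modal multiset $M$ (resp.\ $N$) of a modal rule, where replacing $\Diamond A$ by a non-modal $A$ destroys the side condition needed to re-apply the rule.

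However, your resolution of that obstacle does not work as described. A secondary induction on the complexity of $A$ that peels off the outermost propositional connective and ``reduces to the immediate subformulas of $A$'' is impossible for conjunction under $\Diamond$ (dually, for disjunction or implication under $\Box$): $\Diamond(B\wedge C)$ is not reducible to facts about $\Diamond B$ and $\Diamond C$ separately --- for instance $\Diamond(p\wedge\neg p),\Gamma\vdash\Delta$ is derivable while $\Diamond p,\Diamond\neg p,\Gamma\vdash\Delta$ need not be --- so no admissible rule can take you from the sequent about $\Diamond(B\wedge C)$ to sequents about its immediate subformulas. The clauses of Lemma \ref{ezaf1} you invoke all presuppose a top-level conjunction $\Diamond(X\wedge C)$ and rewrite one conjunct at a time; iterating them is exactly the computation of a full disjunctive normal form of $A$ into blocks of literals and modal formulas, which is what the paper does in Lemma \ref{NF}, not a one-connective peel. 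Even after that decomposition, two further ingredients are indispensable and absent from your sketch: first, a device for absorbing the loose atomic literals into the storage formula $\Diamond\bigwedge(P,\neg Q)$ or $\Box\bigvee(\neg P,Q)$ so that the modal rule can be re-applied (this is Lemma \ref{general;} and Corollary \ref{cor ;}, i.e.\ the L$\Diamond^;$/R$\Box^;$ shuffling, and it is also what your ``atomic base case, argued directly with Lemma \ref{W}'' actually requires --- weakening alone cannot move an atom into the boxed storage formula); and second, a reassembly lemma recovering $A,\Gamma\vdash\Delta$ from the family of sequents indexed by the DNF-disjuncts of $A$ (this is Lemma \ref{constA}). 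The paper's proof consists precisely of these pieces (Lemma \ref{NF} via Lemma \ref{ezaf1}, then Corollary \ref{cor ;}, then Lemma \ref{constA}) assembled globally, with no induction on the derivation of the premise; your proposal would need all of them in the delicate case, at which point the outer height induction no longer carries the argument.
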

Before we prove this lemma, we need to state some properties in the following.

Similar to the  propositional logic, every formula $ A $  has an  equivalent disjunctive normal form (DNF) and an  equivalent conjunctive  normal  form (CNF), such that each conjunction in DNF and disjunction  in CNF   are respectively as follows:
$$ (p_1\wedge \cdots \wedge p_n\wedge \neg q_1\wedge\cdots\wedge\neg q_m\wedge B_1\wedge\cdots\wedge B_k\wedge \neg C_1\wedge\cdots\wedge\neg C_l) $$
and
$$ (p_1\vee \cdots \vee p_n\vee \neg q_1\vee\cdots\vee\neg q_m\vee B_1\vee\cdots\vee B_k\vee \neg C_1\vee\cdots\vee\neg C_l) $$
where $ p_1,\ldots,p_n $ and $ q_1,\ldots,q_m $ are atomic, and $ B_1,\ldots,B_k $ and $ C_1\ldots, C_l $ are modal formulas.

For these conjunctions and   disjunctions we have the following lemma.
\begin{Lem}\label{NF}
Let $ A $ be a formula, and let
$$ (p_1\wedge \cdots \wedge p_n\wedge \neg q_1\wedge\cdots\wedge\neg q_m\wedge B_1\wedge\cdots\wedge B_k\wedge \neg C_1\wedge\cdots\wedge\neg C_l) $$
be a  conjunction in the DNF 
and
$$ (p_1\vee \cdots \vee p_n\vee \neg q_1\vee\cdots\vee\neg q_m\vee B_1\vee\cdots\vee B_k\vee \neg C_1\vee\cdots\vee\neg C_l) $$
be a  disjunction in the CNF,  where $ p_1,\ldots,p_n $ and $ q_1,\ldots,q_m $ are atomic, and $ B_1,\ldots,B_k $ and $ C_1\ldots, C_l $ are modal formulas. Then  the following rules are admissible.
\[
   \begin{array}{l}
	\AXC{$ \Diamond A,\Gamma\vdash\Delta $}
	\LL{\rm (i)}
	\UIC{$B_1,\ldots,B_k, \Diamond (p_1\wedge\cdots \wedge p_n\wedge  \neg q_1\wedge\cdots\wedge\neg q_m),\Gamma\vdash\Delta, C_1\ldots, C_l $}
	\DP\\
	$ \quad $\\
	\AXC{$ \Gamma\vdash\Delta,\Box A $}
	\LL{\rm (ii)}
	\UIC{$ C_1\ldots, C_l,\Gamma\vdash\Delta,B_1,\ldots,B_k, \Box (p_1\vee\cdots \vee p_n\vee  \neg q_1\vee\cdots\vee\neg q_m) $}
	\DP\\
	\end{array}
	\]
\end{Lem}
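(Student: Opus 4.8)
The plan is to prove (i) and to obtain (ii) from it by the left--right symmetry of $\text{\rm G3{\scriptsize S5}}^;$ and of Lemma~\ref{ezaf1}: the items of that lemma come in antecedent/succedent pairs related by the De~Morgan dual ($\Diamond$-in-antecedent $\leftrightarrow$ $\Box$-in-succedent, $\wedge\leftrightarrow\vee$, pushing $\neg$ through), and (ii) for $A$ amounts to (i) for $\neg A$ after that dualization, together with the $\neg$-rules (height-preserving invertible by Lemma~\ref{inversion}) and the interderivability of $\Box A$ with $\neg\Diamond\neg A$. So I only describe the argument for (i), whose engine is Lemma~\ref{ezaf1}.

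First phase: normalize the body. Starting from a derivation of $\Diamond A,\Gamma\vdash\Delta$, I would transform the hypothesis into $\Diamond D,\Gamma\vdash\Delta$, where $D=p_1\wedge\cdots\wedge p_n\wedge\neg q_1\wedge\cdots\wedge\neg q_m\wedge B_1\wedge\cdots\wedge B_k\wedge\neg C_1\wedge\cdots\wedge\neg C_l$ is the chosen disjunct of the DNF of $A$. This is carried out by structural induction on $A$ using the odd-numbered items $(1),(3),(5),(7),(9),(11),(13),(15)$ of Lemma~\ref{ezaf1}, which push negations inward and distribute $\wedge$ over $\vee$ inside a $\Diamond$, turning a hypothesis $\Diamond(E\wedge C),\Gamma\vdash\Delta$ — where $C$ is the tail of the conjunction assembled so far, exactly the shape those items require — into hypotheses with $E$ strictly simpler. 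When $A$ is $\bot$ or $\top$, or when $n=m=0$ so the inner conjunction is empty, the statement degenerates and is handled directly, using the interderivability of $\Diamond\Box X$ with $\Box X$ and of $\Diamond\Diamond X$ with $\Diamond X$ (the instances of items $(19),(23)$ with empty $C$).

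Second phase: extract the modal literals. Presenting $D$ as $(\text{modal literal})\wedge D'$, with the modal literal one of $B_1,\dots,B_k$ (of shape $\Box(\cdot)$ or $\Diamond(\cdot)$) or one of $\neg C_1,\dots,\neg C_l$ (of shape $\neg\Box(\cdot)$ or $\neg\Diamond(\cdot)$) and $D'$ the conjunction of the remaining literals, I apply the matching item among $(17),(19),(21),(23)$ of Lemma~\ref{ezaf1}: $(17)$ and $(19)$ move $\Diamond X$, resp.\ $\Box X$, out into the antecedent and leave $\Diamond D',\Gamma\vdash\Delta$; $(21)$ and $(23)$ move $C_j$ out into the succedent and again leave $\Diamond D'$. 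Iterating this $k+l$ times — an induction on the number of modal literals of $D$ — sends $B_1,\dots,B_k$ to the antecedent and $C_1,\dots,C_l$ to the succedent and reduces the hypothesis to $\Diamond(p_1\wedge\cdots\wedge p_n\wedge\neg q_1\wedge\cdots\wedge\neg q_m),\Gamma\vdash\Delta$, which is the conclusion of (i).

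The hard part is bookkeeping, not ideas. One has to keep the conjunction inside $\Diamond$ in the exact ``$X\wedge C$'' shape that the items of Lemma~\ref{ezaf1} demand, which means routinely reassociating and reordering conjuncts (justified by those same items, or by Lemma~\ref{lem 3.8} together with Lemma~\ref{arbit} and Lemma~\ref{inversion}), and one has to dispose of all the degenerate cases — empty conjunction, $\bot$, $\top$, $k=0$, $l=0$ — without ever invoking a replacement-of-equivalents principle, since the cut rule is not available at this stage of the development.
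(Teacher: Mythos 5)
Your argument for part (i) is essentially the paper's: the paper proves Lemma \ref{NF} simply by observing that it follows from Lemma \ref{ezaf1}, and your two-phase elaboration (odd-numbered items to descend to the chosen DNF-conjunct, then items (17), (19), (21), (23) to expel the modal literals $B_i$ into the antecedent and $C_j$ into the succedent) is exactly how that observation cashes out, bookkeeping about the $X\wedge C$ shape included.

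The one genuine flaw is your treatment of part (ii). You propose to reduce it to part (i) for $\neg A$ using ``the interderivability of $\Box A$ with $\neg\Diamond\neg A$,'' but passing from a derivation of $\Gamma\vdash\Delta,\Box A$ to one of $\Gamma\vdash\Delta,\neg\Diamond\neg A$ is precisely a replacement-of-equivalents step, which — as you yourself note at the end of your proposal — is unavailable before cut admissibility is established. This detour is also unnecessary: the even-numbered items (2), (4), \dots, (24) of Lemma \ref{ezaf1} are the succedent-side, $\Box$-versions of the odd ones, so part (ii) is proved directly by the mirror image of your argument for (i), with no appeal to (i) or to any equivalence of $\Box$ and $\neg\Diamond\neg$.
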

\begin{proof}
This  easily follows from Lemma \ref{ezaf1}.
\end{proof}
\begin{Lem}\label{general;}
The following rule is  admissible:
	\begin{prooftree}
		\AXC{$ \Gamma;P\vdash Q;\Delta $}
		\UIC{$ \Gamma,P\vdash Q,\Delta $}
	\end{prooftree}
where $ \Gamma $ and $ \Delta $ are multisets of arbitrary formulas, and $ P$ and $Q $  are multisets of atomic formulas.
\end{Lem}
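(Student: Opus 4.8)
The plan is to argue by induction on the height $n$ of the given derivation of $\Gamma;P\vdash Q;\Delta$, constructing from it a derivation of $\Gamma,P\vdash Q,\Delta$.

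For the base case, $\Gamma;P\vdash Q;\Delta$ is an initial sequent, so either $\bot\in\Gamma$ or some atom occurs both in $\Gamma$ and in $\Delta$; the atoms of $P$ and $Q$ are locked inside the storage formulas $\Diamond\bigwedge X'_i$ and $\Box\bigvee Y'_j$ and so never close an axiom, hence $\Gamma,P\vdash Q,\Delta$ — obtained by merely adjoining $P$ to the antecedent and $Q$ to the succedent — is again initial. In the inductive step, if the last rule is a propositional rule, $\mathrm L\Box$ or $\mathrm R\Diamond$, then the principal formula lies in $\Gamma$ or $\Delta$ and the middle parts of the premises are still $P,Q$; so apply the induction hypothesis to the premise(s) and reapply the rule. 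If it is $\mathrm L\Diamond^;$ or $\mathrm R\Box^;$, then the premise and the conclusion are built from exactly the same atoms $P,Q$ — only their grouping into storage formulas (the partition) changes — so the sequent returned by the induction hypothesis applied to the premise is already $\Gamma,P\vdash Q,\Delta$ up to the order of the multisets, and nothing more is needed.

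The one substantial case is $\mathrm L\Diamond$ (and, symmetrically, $\mathrm R\Box$): the conclusion is $\Diamond A,M,P_1;P_2\vdash Q_2;Q_1,N$, so $\Gamma=\Diamond A,M,P_1$, $P=P_2$, $Q=Q_2$, $\Delta=Q_1,N$, with premise $A,M;P_1,P_2\vdash Q_2,Q_1;N$. The target is $\Diamond A,M,P_1,P_2\vdash Q_2,Q_1,N$, and I would re-derive it by re-applying $\mathrm L\Diamond$ to $\Diamond A$ with all of $P_1,P_2$ moved out of the antecedent and all of $Q_1,Q_2$ out of the succedent, which reduces the task to deriving the single auxiliary sequent $A,M\vdash\Box\bigvee(\neg P_1,\neg P_2,Q_1,Q_2),N$. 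That sequent I would obtain from the premise $A,M;P_1,P_2\vdash Q_2,Q_1;N$ — whose derivability is what is actually available — by applying the induction hypothesis after reading part of the premise's storage back out into the context, and then using weakening (Lemma \ref{W}), $\mathrm R\neg$, $\mathrm R\vee$, the height-preserving invertibilities of Lemma \ref{inversion}, the regroupings in the style of Lemmas \ref{lem 3.8}, \ref{ezaf1} and \ref{NF}, and, when $A$ is not already a box, a diamond or an atom, a side induction on the structure of $A$ that decomposes it by propositional rules until $\mathrm R\Box$ legitimately applies.

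The hard part is exactly this last case. The rules $\mathrm L\Diamond$ and $\mathrm R\Box$ are the ones that genuinely build a modality, so unwinding the storage bookkeeping forces that modality to be re-introduced, and — since the proof of Lemma \ref{*} will itself lean on the present lemma — this has to be managed without Lemma \ref{*}. The delicate point is to show that the auxiliary sequent really is \emph{derivable} and not merely S5-valid: one must exploit the full strength of ``the premise $A,M;P_1,P_2\vdash Q_2,Q_1;N$ is derivable'' rather than the weaker statement that $A,M,P_1,P_2\vdash Q_2,Q_1,N$ is, and push the decomposition of $A$ far enough that $\mathrm R\Box$ can be applied.
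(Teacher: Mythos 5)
Your induction set-up, base case, and treatment of the propositional rules, $\mathrm{L}\Box$, $\mathrm{R}\Diamond$, and the re-partitioning rules $\mathrm{L}\Diamond^;$/$\mathrm{R}\Box^;$ coincide with the paper's proof (its Case 2 is exactly your observation that the induction hypothesis applied to the premise of $\mathrm{L}\Diamond^;$ already yields the target multiset). The divergence is in the $\mathrm{L}\Diamond$/$\mathrm{R}\Box$ case, and there your proposal has a genuine gap. The paper never introduces your auxiliary sequent: it re-applies $\mathrm{L}\Diamond$ to the very same premise derivation $\D_1$ of $A,M;P_1,P\vdash_n Q,Q_1;N$, this time moving \emph{all} of $P_1,P$ and $Q,Q_1$ out at once, and concludes $\Diamond A,M,P_1,P\vdash_{n+1}Q,Q_1,N$ in a single step, with no use of the induction hypothesis. (That step tacitly identifies the sequent $\D_1$ actually derives --- whose middle part carries the partition inherited from the conclusion together with the one new block $P_1\cup Q_1$ --- with the sequent $A,M\vdash\Box\bigvee(\neg P_1,\neg P,Q,Q_1),N$ in which all the storage is merged into one box; you are right that these are in general different sequents, but that one-line identification is the paper's whole argument here.)

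Your route instead obliges you to \emph{derive} $A,M\vdash\Box\bigvee(\neg P_1,\neg P_2,Q_1,Q_2),N$, and the derivation you sketch does not close. If you use the full induction hypothesis you only get $A,M,P_1,P_2\vdash Q_2,Q_1,N$, which you correctly call too weak; but the rebuild you then describe (weakening, $\mathrm{R}\neg$, $\mathrm{R}\vee$, decompose $A$ by a side induction until $\mathrm{R}\Box$ ``legitimately applies'', then $\mathrm{L}\Diamond$) would, if it worked for arbitrary $A$, prove the inadmissible passage from $A,\Gamma\vdash\Delta$ to $\Diamond A,\Gamma\vdash\Delta$ (compare $p\vdash p$ with $\Diamond p\vdash p$, which is not even S5-valid). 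The modal side condition on $\mathrm{R}\Box$ that blocks this is precisely what the side induction on $A$ would have to get around, and you do not say why the resulting $\mathrm{R}\Box$ premises --- in which the atomic constituents of $A$ are swallowed into a $\Diamond\bigwedge(\cdot)$ and thereby change meaning --- are derivable. On the other reading, where the induction hypothesis is applied only to part of the middle and some storage formulas $\Diamond\bigwedge X'_i$, $\Box\bigvee Y'_j$ are kept in the context, you are left having to unpack those remaining storage formulas, which is an instance of the very statement being proved (Corollary \ref{cor ;}) with no decrease in any measure you have introduced. Either way the key case is asserted, not established.
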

\begin{proof}
The proof is by induction on the height of derivation of the premise. 
If  the premise
is an axiom, then  the conclusion is an axiom. 
For the induction step, we consider only  the cases where  the last rule  is L$\Diamond $, since the rule R$\Box $ is treated symmetrically and  for the remaining 
rules it  suffices to apply the induction hypothesis to the premise and
then use the same rule to obtain deduction of
	$ \Gamma,P\vdash Q,\Delta $.
	
Case 1. Let $ \Gamma=\Diamond A,M,P_1 $ and 
	$ \Delta=Q_1,N $ 
and let $ \Diamond A $ be the principal formula.
	\begin{prooftree}
		\AXC{$   A,M;P_1,P\vdash_n Q,Q_1;N $}
		\RL{L$\Diamond $.}
		\UIC{$ \Diamond A,M,P_1;P\vdash_{n+1}Q;Q_1,N $}
	\end{prooftree}
We have
	\begin{prooftree}
		\AXC{$   A,M;P_1,P\vdash_n Q,Q_1;N $}
		\RL{L$\Diamond $.}
		\UIC{$  \Diamond A,M,P_1,P\vdash_{n+1}Q,Q_1,N $}
	\end{prooftree}

Case 2. Let $ \Gamma=M,P_1 $ and  $ \Delta=Q_1,N $, and let $\Diamond\bigwedge( P_2,  \neg Q_2) $ be the principal formula, where $ P=P_2,P_3 $ and $ Q=Q_3,Q_2 $.
\begin{prooftree}
	\AXC{$ M,P_2;P_1,P_3\vdash_n Q_3,Q_1;Q_2,N $}
	\RL{L$\Diamond^; $.}
	\UIC{$ M,P_1;P_2,P_3\vdash_{n+1}Q_3,Q_2;Q_1,N $}
\end{prooftree}
In this case, by induction hypothesis on the premise  we are done.
\end{proof}
The following corollary which is a special case of Lemma \ref{*} and  is used in its proof can be concluded from Lemma \ref{general;}.  
\begin{Cor}\label{cor ;}
	The following rules are admissible
	\begin{prooftree}
		\AXC{$ \Diamond (p_1\wedge\cdots \wedge p_n\wedge  \neg q_1\wedge\cdots\wedge\neg q_m),\Gamma\vdash\Delta $}
		\LL{\rm (i)}
		\UIC{$ p_1,\ldots, p_n, \Gamma\vdash\Delta,q_1,\ldots, q_m$}
	\end{prooftree}
	\begin{prooftree}
		\AXC{$ \Gamma\vdash\Delta, \Box (p_1\vee\cdots \vee p_n\vee  \neg q_1\vee\cdots\vee\neg q_m) $}
		\LL{\rm (ii)}
		\UIC{$q_1,\ldots, q_m, \Gamma\vdash\Delta,  p_1,\ldots, p_n $}
	\end{prooftree}
\end{Cor}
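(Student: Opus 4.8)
The plan is to read each premise as a sequent that is \emph{already} in the semicolon notation of Subsection~\ref{Another version} and then to invoke Lemma~\ref{general;} directly. The point is that a $\Diamond$-formula which is a conjunction of literals over atomic formulas, and a $\Box$-formula which is a disjunction of literals over atomic formulas, are exactly the shapes that can arise as a ``corresponding formula'' $\Diamond\bigwedge X'_i$ or $\Box\bigvee Y'_j$ of a single block of the middle part.

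For~(i), observe that by the very definition of the notation $\Gamma;P\vdash^{\cal P}Q;\Delta$, the sequent
\[ \Diamond(p_1\wedge\cdots\wedge p_n\wedge\neg q_1\wedge\cdots\wedge\neg q_m),\Gamma\vdash\Delta \]
is precisely $\Gamma;p_1,\ldots,p_n\vdash q_1,\ldots,q_m;\Delta$ for the partition ${\cal P}=\{X_1\}$ with the single block $X_1=\{p_1,\ldots,p_n,q_1,\ldots,q_m\}$: here $X'_1=\{p_1,\ldots,p_n,\neg q_1,\ldots,\neg q_m\}$, so the corresponding formula $\Diamond\bigwedge X'_1$ is exactly the displayed $\Diamond$-formula. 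As explained after Table~\ref{tabel 2}, fixing this (single-block) partition at the root of a derivation determines all partitions occurring above it, so a $\text{\rm G3{\scriptsize S5}}^{;}$-derivation of the premise is a bona fide derivation of $\Gamma;p_1,\ldots,p_n\vdash q_1,\ldots,q_m;\Delta$. Applying Lemma~\ref{general;} with $P=\{p_1,\ldots,p_n\}$ and $Q=\{q_1,\ldots,q_m\}$ then yields $p_1,\ldots,p_n,\Gamma\vdash\Delta,q_1,\ldots,q_m$, which is~(i). Part~(ii) is symmetric: $\Gamma\vdash\Delta,\Box(p_1\vee\cdots\vee p_n\vee\neg q_1\vee\cdots\vee\neg q_m)$ is $\Gamma;q_1,\ldots,q_m\vdash p_1,\ldots,p_n;\Delta$ for the partition with the single block $Y_1=\{q_1,\ldots,q_m,p_1,\ldots,p_n\}$, whose corresponding formula $\Box\bigvee Y'_1$ with $Y'_1=\{\neg q_1,\ldots,\neg q_m,p_1,\ldots,p_n\}$ is the displayed $\Box$-formula up to reordering of disjuncts; Lemma~\ref{general;} with $P=\{q_1,\ldots,q_m\}$ and $Q=\{p_1,\ldots,p_n\}$ then gives $q_1,\ldots,q_m,\Gamma\vdash\Delta,p_1,\ldots,p_n$, which is~(ii).

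I expect no genuine obstacle here: once the identification of the premise with a semicolon-form sequent is made, the statement is an immediate instance of Lemma~\ref{general;}, which is why the corollary is recorded at this point (it is then available in the proof of Lemma~\ref{*}). The only bookkeeping subtlety is that some $p_i$ or $q_j$ may occur with multiplicity greater than one, but this causes no difficulty since Lemma~\ref{general;} is already stated for \emph{multisets} $P$ and $Q$.
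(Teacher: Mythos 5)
Your proof is correct and follows exactly the route the paper intends: the paper offers no explicit argument, stating only that the corollary ``can be concluded from Lemma \ref{general;}'', and your identification of the premise with the single-block semicolon sequent $\Gamma;p_1,\ldots,p_n\vdash q_1,\ldots,q_m;\Delta$ (resp.\ its dual) followed by an application of Lemma \ref{general;} is precisely that intended one-step derivation.
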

\begin{Lem}\label{constA}
	$ \, $
	\begin{itemize}
		\item[{\rm (i)}]
		If
		$B_1,\ldots,B_k, p_1,\ldots, p_n, \Gamma\vdash\Delta,q_1,\ldots, q_m, C_1,\ldots, C_l $, for each conjunction
		\[(p_1\wedge\cdots \wedge p_n\wedge \neg q_1\wedge\cdots\wedge \neg q_m \wedge B_1\wedge\cdots\wedge B_k\wedge \neg C_1\wedge \cdots\wedge \neg C_l) \]
		in the DNF of $ A $, then $ A,\Gamma\vdash\Delta $.
		\item[{\rm (ii)}]
		If 
		$C_1\ldots, C_l,q_1,\ldots, q_m, \Gamma\vdash\Delta, B_1,\ldots,B_k, p_1,\ldots, p_n $, for each disjunction
		\[( p_1\vee\cdots \vee p_n\vee \neg q_1\vee\cdots\vee \neg q_m \vee B_1\vee\cdots\vee B_k\vee \neg C_1\vee \cdots\vee \neg C_l) \]
		in the CNF of $ A $, then $ \Gamma\vdash\Delta,A $.
	\end{itemize}
\end{Lem}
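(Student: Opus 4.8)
The plan is to prove (i) and (ii) \emph{simultaneously} by induction on the complexity of $A$. This is essentially forced, because the normal forms are computed recursively in the standard way, and the recursion for $\neg$ and $\rightarrow$ crosses between the two: $\mathrm{DNF}(B\vee C)$ collects the conjunctions of $\mathrm{DNF}(B)$ and of $\mathrm{DNF}(C)$; $\mathrm{DNF}(B\wedge C)$ consists of the conjunctions $D_B\wedge D_C$ with $D_B$ in $\mathrm{DNF}(B)$ and $D_C$ in $\mathrm{DNF}(C)$; $\mathrm{DNF}(\neg B)$ consists of the conjunctions $\neg E$ for $E$ a disjunction of $\mathrm{CNF}(B)$; $\mathrm{DNF}(B\rightarrow C)$ collects the $\neg E$ for $E$ in $\mathrm{CNF}(B)$ together with the conjunctions of $\mathrm{DNF}(C)$; dually $\mathrm{CNF}(B\rightarrow C)$ consists of the clauses $(\neg D_B)\vee E_C$ with $D_B$ in $\mathrm{DNF}(B)$ and $E_C$ in $\mathrm{CNF}(C)$; and an atomic formula, or one whose outermost connective is $\Box$ or $\Diamond$, is already its own single-conjunction $\mathrm{DNF}$ (dually for $\mathrm{CNF}$). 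The key observation used everywhere is that the ``unfolding'' of a conjunction $D$ into a context $\Gamma\vdash\Delta$ — positive literals of $D$ into the antecedent, negated ones into the succedent — depends only on the multiset of literals of $D$; hence the unfolding of $D_B\wedge D_C$ is the unfolding of $D_B$ concatenated with that of $D_C$, and the unfolding of $\neg E$ is the ``mirror'' of the unfolding of the clause $E$. The base cases ($A$ atomic, $A$ of the form $\Box B$ or $\Diamond B$, and the constants) are immediate: the sole hypothesis sequent is then literally the conclusion, up to an instance of Ax for $\bot$.

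For the inductive connectives one applies a single classical propositional rule of the system on top of the induction hypothesis, which is available for every context since the statement quantifies over $\Gamma$ and $\Delta$. The straightforward cases: for (i) with $A=B\vee C$ the hypothesis at $A$ splits into the hypotheses at $B$ and at $C$ for the same $\Gamma\vdash\Delta$, so the induction hypothesis gives $B,\Gamma\vdash\Delta$ and $C,\Gamma\vdash\Delta$, and L$\vee$ yields $B\vee C,\Gamma\vdash\Delta$; dually (ii) with $A=B\wedge C$ via R$\wedge$. For $A=\neg B$, by the observation above the hypothesis of (i) at $\neg B$ for a given context is \emph{verbatim} the hypothesis of (ii) at $B$ for that context, so the induction hypothesis gives $\Gamma\vdash\Delta,B$ and L$\neg$ gives $\neg B,\Gamma\vdash\Delta$; the case of (ii) at $\neg B$ is dual, the hypothesis coinciding with that of (i) at $B$, so the induction hypothesis gives $B,\Gamma\vdash\Delta$ and R$\neg$ gives $\Gamma\vdash\Delta,\neg B$. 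For (i) with $A=B\rightarrow C$ the hypothesis at $A$ is the hypothesis of (ii) at $B$ together with that of (i) at $C$, so the induction hypothesis gives $\Gamma\vdash\Delta,B$ and $C,\Gamma\vdash\Delta$, and L$\rightarrow$ yields $B\rightarrow C,\Gamma\vdash\Delta$.

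The delicate cases are those in which the normal form is a ``product'': (i) at $B\wedge C$, (ii) at $B\vee C$, and (ii) at $B\rightarrow C$. Consider (i) at $A=B\wedge C$. The hypothesis provides, for \emph{every} pair $(D_B,D_C)$ with $D_B$ in $\mathrm{DNF}(B)$ and $D_C$ in $\mathrm{DNF}(C)$, a derivation of the unfolding of $D_B\wedge D_C$ into $\Gamma\vdash\Delta$, which by the observation above is the unfolding of $D_B$ into the enriched context $\Gamma^\ast\vdash\Delta^\ast$ obtained by moving the positive literals of $D_C$ into the antecedent and its negated literals into the succedent. Fixing $D_C$, the residual family over all $D_B$ is precisely the hypothesis of (i) at $B$ relative to $\Gamma^\ast\vdash\Delta^\ast$, so the induction hypothesis yields $B,\Gamma^\ast\vdash\Delta^\ast$; and $B,\Gamma^\ast\vdash\Delta^\ast$ is exactly the unfolding of $D_C$ relative to the context $(B,\Gamma)\vdash\Delta$. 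Since this holds for \emph{every} $D_C$, a second application of the induction hypothesis — (i) at $C$ relative to $(B,\Gamma)\vdash\Delta$ — gives $B,C,\Gamma\vdash\Delta$, and L$\wedge$ finishes. The cases (ii) at $B\vee C$ (peel off one $\mathrm{CNF}$-clause of $C$, recover $C$ in the succedent by the induction hypothesis applied to $B$, close the loop by the induction hypothesis applied to $C$, and finish with R$\vee$) and (ii) at $B\rightarrow C$ (the same pattern, using the product description of $\mathrm{CNF}(B\rightarrow C)$ above and finishing with R$\rightarrow$) are entirely analogous.

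The main obstacle is precisely this bookkeeping in the product cases: one must verify that absorbing one factor's literals into the context turns the remaining family of premises \emph{exactly} into the induction hypothesis for the other factor, and that this is uniform over all components, so that the nested double appeal to the induction hypothesis is legitimate. Everything else is routine. Note that the argument uses only the classical propositional rules of the system (and, for the degenerate constant cases, weakening, Lemma \ref{W}); it does not invoke contraction, cut, or any modal rule, so it is available at this stage for the proof of Lemma \ref{*}.
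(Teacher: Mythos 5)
The paper states this lemma without any proof (it falls under the authors' blanket remark that some proofs are omitted as easy or similar to those in the cited references), so there is no argument in the paper to compare yours against; your proposal supplies the missing proof, and it is correct. The simultaneous induction on the complexity of $A$ is the right engine: the recursions for DNF and CNF cross at $\neg$ and at the antecedent of $\rightarrow$, which is exactly why (i) and (ii) must be proved together, and you correctly isolate the only delicate point, the product cases ((i) at $B\wedge C$, (ii) at $B\vee C$ and $B\rightarrow C$). There the family of hypothesis sequents indexed by pairs $(D_B,E_C)$ has to be regrouped: fix one factor, absorb its literals into the context, recognize the remaining family as the induction hypothesis for the other factor relative to the enriched context, and then make a second, outer appeal to the induction hypothesis; this is legitimate precisely because the statement quantifies over arbitrary $\Gamma,\Delta$ and because the unfolding of a conjunction $D_B\wedge D_C$ is the multiset concatenation of the two unfoldings, as you note. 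Two small caveats. First, the argument presupposes the literal recursive computation of the normal forms in which duplicate literals are kept as multiset copies; otherwise matching multiplicities would require contraction, which in this paper is only established afterwards (Lemma \ref{C} depends on Lemma \ref{*}), so your explicit remark that only the propositional rules and weakening (Lemma \ref{W}) are used is both correct and important for avoiding circularity. Second, the degenerate case $\Gamma\vdash\Delta,\top$ in (ii) is not actually derivable, since Table \ref{tabel 2} provides no rule or axiom for $\top$; this is an oversight of the paper's system rather than of your proof, but it is worth flagging rather than folding silently into the ``constant cases.''
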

We now deduce Lemma \ref{*} from Lemma \ref{NF}, Corollary \ref{cor ;} and Lemma \ref{constA}.
\begin{proof}[ {\rm \textbf{Proof of Lemma \ref{*}}}]
	For the proof of  part $ {\rm (\rom{1})} $, we have  
\begin{prooftree}
	\AXC{$ \Diamond A,\Gamma\vdash\Delta $}
	\RL{Lemma \ref{NF}}
	\UIC{$B_1,\ldots,B_k, \Diamond (p_1\wedge\cdots \wedge p_n\wedge  \neg q_1\wedge\cdots\wedge\neg q_m),\Gamma\vdash\Delta, C_1\ldots, C_l $}
	\RL{Corollary \ref{cor ;},}
	\UIC{$B_1,\ldots,B_k, p_1,\ldots, p_n, \Gamma\vdash\Delta,q_1,\ldots, q_m, C_1,\ldots, C_l $}
	\end{prooftree}
for every conjunction 
$ (p_1\wedge \cdots \wedge p_n\wedge \neg q_1\wedge\cdots\wedge\neg q_m\wedge B_1\wedge\cdots\wedge B_k\wedge \neg C_1\wedge\cdots\wedge\neg C_l) $
 in the DNF of $ A $.
	Now the proof will be concluded  by applying Lemma  \ref{constA}.
	
	Similarly, for the proof of   part $ {\rm (\rom{2})} $, we have
\begin{prooftree}
	\AXC{$ \Gamma\vdash\Delta,\Box A $}
	\RL{Lemma \ref{NF}}
	\UIC{$ C_1\ldots, C_l,\Gamma\vdash\Delta,B_1,\ldots,B_k, \Box (p_1\vee\cdots \vee p_n\vee  \neg q_1\vee\cdots\vee\neg q_m) $}
	\RL{Corollary \ref{cor ;},}
	\UIC{$C_1\ldots, C_l,q_1,\ldots, q_m, \Gamma\vdash\Delta, B_1,\ldots,B_k, p_1,\ldots, p_n $}
\end{prooftree}
for every  disjunction
$ (p_1\vee \cdots \vee p_n\vee \neg q_1\vee\cdots\vee\neg q_m\vee B_1\vee\cdots\vee B_k\vee \neg C_1\vee\cdots\vee\neg C_l) $ in the CNF of $ A $.
Again we are done by applying Lemma  \ref{constA}.
\end{proof}
\begin{Cor}\label{**}
	The following rule is admissible.
	\begin{prooftree}
		\AXC{$ \Diamond\Gamma_1,\Gamma;P\vdash Q ;\Delta,\Box\Delta_1 $}
		\RL{,}
		\UIC{$ \Gamma_1,\Gamma;P\vdash Q ;\Delta,\Delta_1 $}
	\end{prooftree}
	where
	$ \Gamma_1 $ and 
	$ \Delta_1 $ are multisets of arbitrary formulas.
\end{Cor}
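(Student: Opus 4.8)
The plan is to obtain the rule by finitely many applications of Lemma~\ref{*}, peeling off one modal operator at a time, and to organize these applications by induction on $|\Gamma_1|+|\Delta_1|$, the total number of formula occurrences in $\Gamma_1$ together with $\Delta_1$. The first thing to record is that $\Diamond\Gamma_1,\Gamma;P\vdash Q;\Delta,\Box\Delta_1$ is, by the very definition of the $;$-notation, an abbreviation of the ordinary sequent obtained by adjoining the corresponding formulas $\Diamond\bigwedge X'_i$ and $\Box\bigvee Y'_j$ of some partition of $P\cup Q$ to the antecedent and succedent; in particular Lemma~\ref{*}, which is stated for ordinary sequents, may be applied to it with those corresponding formulas treated as part of the (arbitrary) context.

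For the base case $\Gamma_1=\emptyset$ and $\Delta_1=\emptyset$ there is nothing to prove. For the inductive step, suppose first that $\Gamma_1=A,\Gamma_1'$ for some formula $A$, so that $\Diamond\Gamma_1,\Gamma=\Diamond A,\Diamond\Gamma_1',\Gamma$. Applying Lemma~\ref{*}(\rom{1}) to the premise $\Diamond A,\Diamond\Gamma_1',\Gamma;P\vdash Q;\Delta,\Box\Delta_1$ yields $A,\Diamond\Gamma_1',\Gamma;P\vdash Q;\Delta,\Box\Delta_1$, which we read as $\Diamond\Gamma_1',(A,\Gamma);P\vdash Q;\Delta,\Box\Delta_1$, i.e.\ an instance of the same rule with context $A,\Gamma$ and strictly smaller $\Gamma_1'$. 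By the induction hypothesis we obtain $\Gamma_1',(A,\Gamma);P\vdash Q;\Delta,\Delta_1$, which is precisely $\Gamma_1,\Gamma;P\vdash Q;\Delta,\Delta_1$. If instead $\Gamma_1=\emptyset$ and $\Delta_1=B,\Delta_1'$, the symmetric argument applies: Lemma~\ref{*}(\rom{2}) replaces the succedent formula $\Box B$ by $B$, which is then absorbed into the context $\Delta$, and the induction hypothesis finishes the case.

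Throughout this process the middle part $P\cup Q$, together with its partition and hence the relatedness and the corresponding formulas $\Diamond\bigwedge X'_i$, $\Box\bigvee Y'_j$, is never disturbed, so each intermediate sequent retains a coherent $;$-representation and no ambiguity arises. I do not expect a genuine obstacle here: all the modal reasoning has already been discharged inside Lemma~\ref{*}, and what is left is the purely combinatorial task of dismantling the multisets $\Gamma_1$ and $\Delta_1$ one element at a time. The only point that deserves explicit mention is the remark of the first paragraph, namely that a $;$-sequent is literally an ordinary sequent, which is what makes Lemma~\ref{*} applicable to the premise at each stage.
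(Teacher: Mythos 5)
Your proof is correct and matches the paper's intent: Corollary \ref{**} is stated there without proof as an immediate consequence of Lemma \ref{*}, obtained by iterating parts (\rom{1}) and (\rom{2}) over the formulas of $\Gamma_1$ and $\Delta_1$, which is exactly the induction on $|\Gamma_1|+|\Delta_1|$ you carry out. Your explicit remark that a $;$-sequent is by definition an ordinary sequent (so that Lemma \ref{*} applies with the corresponding formulas $\Diamond\bigwedge X'_i$, $\Box\bigvee Y'_j$ absorbed into the context, leaving the middle part undisturbed) is the only point needing care, and you handle it correctly.
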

In  Lemma   \ref{*} and Corollary \ref{**},  the deduction  of the conclusion sequent is produced by permutations of the rules in the deduction of the premise sequent.  The following example  is provided to show this permutation.
\begin{Examp}\label{ex1 for 3.13}
	{\rm 
		Let $ \D $ be a derivation of 
		$ M,P,\Diamond\Diamond r, \Diamond(p\rightarrow\Box \Diamond q)\vdash \Box(\Diamond s\vee t),Q,N $.
		We get a derivation $ \D' $ for
		\[ M,P,\Diamond r, \Diamond(p\rightarrow\Box \Diamond q)\vdash \Diamond s\vee t,Q,N. \]
		
		Suppose  $ \D $ is as 
		\begin{prooftree}
			\AXC{$ \D_1 $}
			\noLine
			\UIC{$ M,\Diamond\Diamond r;P\vdash Q,p;\Diamond s \vee t,N $}
			\RL{R$\Box $}
			\UIC{$ M,\Diamond\Diamond r;P\vdash Q;\Box(\Diamond s\vee t) ,p,N $}
			\AXC{$ \D_2 $}
			\noLine
			\UIC{$ M,\Diamond q ,\Diamond\Diamond r;P\vdash Q;\Diamond s\vee t,N$}
			\RL{R$\Box $}
			\UIC{$ M,\Diamond q ,\Diamond\Diamond r;P\vdash Q;\Box(\Diamond s\vee t),N$}
			\RL{L$\Box $}
			\UIC{$ M,\Box \Diamond q,\Diamond\Diamond r;P\vdash Q;\Box(\Diamond s\vee t),N$}  
			\RL{L$\rightarrow $}
			\BIC{$ M,\Diamond\Diamond r, p\rightarrow\Box \Diamond q;P\vdash Q;\Box(\Diamond s\vee t),N $}
			\RL{L$\Diamond $,W}
			\UIC{$ M,P,\Diamond\Diamond r, \Diamond(p\rightarrow\Box \Diamond q)\vdash \Box(\Diamond s\vee t),Q,N $}
		\end{prooftree}
		where 
		$ \D_1 $ and $ \D_2 $ are respectively
		as follows:
		\begin{prooftree}
			\AXC{$ \D_{1_1} $}
			\noLine
			\UIC{$ M,\Diamond r;P\vdash Q;\Diamond s,p,N  $}
			\RL{L$ \Diamond^; $}
			\UIC{$ M,\Diamond r;P\vdash Q,p;\Diamond s,N  $}
			\RL{L$\Diamond $}
			\UIC{$ M,\Diamond\Diamond r;P\vdash Q,p;\Diamond s ,N $}
			\AXC{$ \D_{1_2} $}
			\noLine
			\UIC{$ M,\Diamond r;P\vdash Q,t;p,N $}
			\RL{L$ \Diamond^; $}
			\UIC{$ M,\Diamond r;P\vdash Q,p,t;N $}
			\RL{L$\Diamond $}
			\UIC{$  M,\Diamond\Diamond r;P\vdash Q,p;t,N $}
			\RL{R$\vee $}
			\BIC{$  M,\Diamond\Diamond r;P\vdash Q,p;\Diamond s \vee t,N $}
		\end{prooftree}
		\begin{prooftree}
			\AXC{$ \D_{2_1} $}
			\noLine
			\UIC{$ M,\Diamond q,\Diamond r;P\vdash Q;\Diamond s,N$}
			\RL{L$\Diamond $}
			\UIC{$ M,\Diamond q,\Diamond\Diamond r;P\vdash Q;\Diamond s,N$}
			\AXC{$ \D_{2_2} $}
			\noLine
			\UIC{$ M,\Diamond q ,\Diamond r;P\vdash Q,t;N$}
			\RL{L$\Diamond $}
			\UIC{$ M,\Diamond q ,\Diamond\Diamond r;P\vdash Q; t,N$}
			\RL{R$\vee $}
			\BIC{$ M,\Diamond q ,\Diamond\Diamond r;P\vdash Q;\Diamond s\vee t,N$}
		\end{prooftree}
		Thus by permutation of the rules L$ \rightarrow $ and the rules L$ \Diamond $  and R$ \vee $  we get  $ \D' $ as 
		\begin{prooftree}
			\AXC{$ \D_{1_1} $}
			\noLine
			\UIC{$ M,\Diamond r;P\vdash Q;\Diamond s,p,N  $}
			\AXC{$ \D_{2_1} $}
			\noLine
			\UIC{$ M,\Diamond q,\Diamond r;P\vdash Q;\Diamond s,N$}
			\RL{L$\rightarrow $}
			\BIC{$ M, \Diamond r,p\rightarrow\Diamond q;P\vdash Q;\Diamond s ,N  $}
			\RL{L$\Diamond $}
			\UIC{$ M, \Diamond r,\Diamond(p\rightarrow\Diamond q),P\vdash Q,\Diamond s,N   $}
		\end{prooftree}
		and 
		\begin{prooftree}
			\AXC{$ \D_{1_2} $}
			\noLine
			\UIC{$ M,\Diamond r;P\vdash Q,t;p,N   $}
			\AXC{$ \D_{2_2} $}
			\noLine
			\UIC{$ M,\Diamond q ,\Diamond r;P\vdash Q,t;N $}
			\RL{L$\rightarrow $}
			\BIC{$  M,\Diamond r,p\rightarrow\Diamond q;P\vdash Q,t;N   $}
			\RL{L$\Diamond $}
			\UIC{$ M, \Diamond r,\Diamond(p\rightarrow\Diamond q), P\vdash Q,t ,N  $}
		\end{prooftree}
		and then by applying the rule R$\vee $ to derive 
		$ M,P,	\Diamond r,\Diamond(p\rightarrow\Diamond q)\vdash \Diamond s\vee t,Q,N $.
	}
\end{Examp}
 We now  prove   the admissibility of the contraction rule that is required for the proof of the  admissibility of the cut rule (Theorem \ref{cut}).
\begin{Lem} \label{C}
	The rules of contraction,
	\begin{align*}
		&\AXC{$ \Gamma;p,p,P\vdash_n Q;\Delta  $}
		\RL{$ \text{\rm LC}^{;} $}
		\UIC{$ \Gamma;p,P\vdash_n Q;\Delta $}
		\DP
	&&
		\AXC{$ \Gamma;P\vdash_n Q,q,q;\Delta  $}
	\RL{$ \text{\rm RC}^{;} $}
	\UIC{$ \Gamma;P\vdash_n Q,q;\Delta $}
	\DP
		\\[0.4em]		
&
\AXC{$ A,A,\Gamma;P\vdash Q;\Delta  $}
\RL{LC}
\UIC{$ A,\Gamma;P\vdash Q;\Delta $}
\DP
&&
\AXC{$ \Gamma;P\vdash Q;\Delta,A,A  $}
\RL{RC,}
\UIC{$ \Gamma;P\vdash Q;\Delta,A $}
\DP
\end{align*}	
	are  admissible, where $ A $ is an arbitrary formula,   $ p $  and $ q $ are atomic formulas, and both $p$'s in the rule $ \text{\rm LC}^{;} $ as well as both $q$'s in the rule $ \text{\rm RC}^{;} $  have the same corresponding formulas $ \Diamond\bigwedge X_i $ or $ \Box\bigvee Y_j $  as in Subsection \ref{Another version}.
\end{Lem}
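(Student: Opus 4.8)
The plan is to prove all four contraction rules by simultaneous induction on the height $n$ of the derivation of the premise, treating the two atomic versions $\text{LC}^{;}$ and $\text{RC}^{;}$ together with the general versions $\text{LC}$ and $\text{RC}$. The base case is routine: if the premise is an axiom, then the contracted sequent is still an axiom (the duplicated formula, atomic or otherwise, is never the sole reason the sequent is initial, since contracting one copy leaves the other). For the induction step, I would inspect the last rule $\rho$ in the derivation of the premise and split into the usual two situations: either the formula being contracted (one of its two occurrences) is not principal in $\rho$, or it is.

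\textbf{The easy branch.} When neither contracted occurrence is principal in $\rho$, I would use height-preserving invertibility (Lemma \ref{inversion}) for all rules except $\text{L}\Diamond$ and $\text{R}\Box$: invert $\rho$ to push the two occurrences into the premise(s) of $\rho$ at height $\le n-1$, apply the induction hypothesis to contract, then reapply $\rho$. For the two modal rules $\text{L}\Diamond$ and $\text{R}\Box$ (which are not invertible), I would instead argue directly by permuting the contraction above $\rho$: the two occurrences sit in the context $M$, $N$, $P$ or $Q$ (or in the middle part) of the premise of $\rho$, the induction hypothesis applies there, and $\rho$ is reapplied to the contracted premise. For $\text{L}\Diamond^{;}$ and $\text{R}\Box^{;}$ one must also check that the relatedness/partition data is respected after contraction, but since both contracted copies carry the same corresponding formula $\Diamond\bigwedge X_i$ or $\Box\bigvee Y_j$ (this is exactly the hypothesis in the statement), contracting them merely removes one element of the block $X_i$ or $Y_j$ and leaves the partition of the rest intact.

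\textbf{The hard branch.} When one of the two contracted occurrences \emph{is} principal in $\rho$, the standard trick is: invert $\rho$ on the \emph{other} (non-principal) occurrence to recover the premise(s) of $\rho$, which now contain the duplicated principal subformulas; apply the induction hypothesis (possibly several times, on smaller formulas) to these; then reapply $\rho$. This is where the atomic rules $\text{LC}^{;}$, $\text{RC}^{;}$ and the modal rules $\text{L}\Diamond^{;}$, $\text{R}\Box^{;}$ interact delicately. For instance, if the last rule is $\text{L}\Diamond$ with principal formula $\Diamond A$ and we are contracting a duplicated atom $p\in P$ sitting in the context, the premise has $p,p$ pushed into the middle part, and we need $\text{LC}^{;}$ to handle it there — so the atomic and general versions genuinely must be proved together. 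The most delicate case is contracting two atoms $p,p$ in the middle part (the $\text{LC}^{;}$ rule) when the last rule is $\text{L}\Diamond^{;}$ or $\text{R}\Box^{;}$ acting on a block containing those very atoms: here I would unfold the definition of $\Gamma;P\vdash^{\mathcal P} Q;\Delta$ back to the plain sequent $\Gamma,\Diamond\bigwedge X'_i,\ldots\vdash\ldots$, observe that contracting $p,p$ inside $X_i$ turns $\Diamond\bigwedge X'_i$ into a conjunction with one fewer copy of $p$, and then invoke the admissibility lemmas already proved (Lemma \ref{lem 3.8}, Lemma \ref{ezaf1}, and especially Corollary \ref{cor ;} and Lemma \ref{general;}) to simulate this contraction inside the conjunction/disjunction. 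I expect this interplay — contraction inside the ``storage'' conjunctions $\Diamond\bigwedge X'_i$ and disjunctions $\Box\bigvee Y'_j$, reconciled with the partition bookkeeping — to be the main obstacle, and the reason the statement is phrased with the explicit ``same corresponding formula'' proviso; everything else reduces to the classical G3-style contraction argument combined with Lemma \ref{inversion} and the permutation lemmas of this section.
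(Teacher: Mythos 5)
Your overall architecture (simultaneous treatment of all four rules, case split on the last rule, height-preserving inversion for the non-modal rules, and the observation that atoms contracted in the ordinary context of an L$\Diamond$/R$\Box$ inference reappear in the middle part of the premise, forcing LC$^{;}$/RC$^{;}$ to be proved together with LC/RC) matches the paper. But there is a genuine gap in your induction measure and, consequently, in the hard modal case. You induct on the height $n$ alone, whereas the paper inducts primarily on the \emph{complexity} of the contracted formula with only a subinduction on height. The case that forces this is LC with $A=\Diamond B$ where one occurrence of $\Diamond B$ is principal in a final L$\Diamond$: the premise then has the form $B,\Diamond B,M;P_1,P\vdash Q,Q_1;N$, with the surviving occurrence of $\Diamond B$ sitting in the modal context. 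Your ``invert $\rho$ on the other occurrence'' trick is unavailable here (L$\Diamond$ is not invertible, as you yourself note), and the paper's way out is Lemma \ref{*}, which turns $\Diamond B$ into $B$ so that one can contract $B,B$ and reapply L$\Diamond$. Lemma \ref{*} is \emph{not} height-preserving, so after applying it a height-based induction hypothesis no longer applies; only the fact that $B$ is strictly simpler than $\Diamond B$ saves the argument. Your proposal never mentions Lemma \ref{*} and cannot close this case with the measure you chose. The symmetric case (RC with $A=\Box B$ principal in R$\Box$) has the same problem.

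A secondary, smaller issue: for LC$^{;}$ when the last rule is L$\Diamond^{;}$/R$\Box^{;}$ acting on the very block containing the two $p$'s, you propose unfolding the semicolon notation and simulating contraction inside $\Diamond\bigwedge X'_i$ via Lemmas \ref{lem 3.8}, \ref{ezaf1}, Corollary \ref{cor ;} and Lemma \ref{general;}. None of those lemmas actually provides a rule contracting a repeated conjunct $p\wedge p$ to $p$ inside a $\Diamond$-prefixed conjunction, so as stated this step does not go through. The paper's route is simpler and avoids the issue entirely: in the premise of L$\Diamond^{;}$/R$\Box^{;}$ the two $p$'s have moved \emph{out} of the middle part into the ordinary antecedent, so one contracts them there by the (atomic instance of the) LC induction hypothesis at height $n$ and then reapplies the rule with the smaller principal block $\Diamond\bigwedge(p,P_2,\neg Q_2)$. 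I would recommend restructuring your proof around the complexity-first measure and adding Lemma \ref{*} as the key tool for the principal-$\Diamond$/$\Box$ cases.
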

\begin{proof}
	{\rm
		All cases are    proved simultaneously by induction on complexity of $ A $ with subinduction on the height of  derivation of the premises. We only consider some cases that  $ p $ is in the principal formula or $ A $  is a principal formula, the other cases are proved by a similar argument.
		In all cases, if the premise is an axiom, then the conclusion is an axiom too.
		
For the rule $ \text{LC}^{;} $,  let $ \Gamma=M_1,P_1$ and $ \Delta=Q_1,N_1$, and let $ P=P_2,P_3 $ and $  Q=Q_3,Q_2  $. If  the last rule  is 
\begin{prooftree}
	\AXC{$ \D $}
	\noLine
	\UIC{$ M_1,p,p,P_2;P_1,P_3\vdash_n Q_3,Q_1;Q_2,N_1 $}
	\RL{$R $,}
	\UIC{$M_1,P_1;p,p,P_2,P_3\vdash_{n+1} Q_3,Q_2;Q_1,N_1 $}
\end{prooftree}
where $ R $ is R$\Box^; $  or  L$\Diamond^; $ with principal formula $ \Diamond\bigwedge(p, p, P_2,\neg Q_2) $ or $ \Box\bigvee(\neg p,\neg p, \neg P_2, Q_2) $,
then the conclusion is obtained by applying induction hypothesis to the middle part which preserves height then  to the first part, and then by applying the rule $ R$ on the formula $ \Diamond\bigwedge(p,  P_2,\neg Q_2) $ or $ \Box\bigvee(\neg p, \neg P_2, Q_2) $:
\begin{prooftree}
	\AXC{$ \D $}
	\noLine
	\UIC{$ M_1,p,p,P_2;P_1,P_3\vdash_n Q_3,Q_1;Q_2,N_1 $}
	\RL{IH }
	\UIC{$ M_1,p,P_2;P_1,P_3\vdash_n  Q_3,Q_1;Q_2,N_1  $}
	\RL{$R $.}
	\UIC{$M_1,P_1;p,P_2,P_3\vdash Q_3,Q_2;Q_1,N_1 $}
\end{prooftree}

For the rule L$C $, let $ \D $ be a derivation  of
$ A,A,\Gamma;P\vdash Q;\Delta  $, and  let $ \Gamma=M_1,P_1$ and $ \Delta=Q_1,N_1$. There are some cases according to the complexity of $ A $.\\
Case 1. $ A= \Diamond B $ and the last rule is L$\Diamond $:
		\begin{prooftree}
			\AXC{$ \D' $}
			\noLine
			\UIC{$ B,\Diamond B,M_1;P_1,P\vdash Q,Q_1;N_1 $}
			\RL{L$\Diamond $,}
			\UIC{$\Diamond B,\Diamond B,M_1,P_1;P\vdash Q;Q_1,N_1 $}
		\end{prooftree}
		 we have
		\begin{prooftree}
			\AXC{$ \D' $}
			\noLine
			\UIC{$ B,\Diamond B,M_1;P_1,P\vdash Q,Q_1;N_1 $}
			\RL{ Lemma \ref{*} }
			\UIC{$ B, B,M_1;P_1,P\vdash Q,Q_1;N_1 $}
			\RL{ IH }
			\UIC{$B,M_1;P_1,P\vdash Q,Q_1;N_1 $}
			\RL{L$\Diamond $.}
		\UIC{$ \Diamond B,M_1,P_1;P\vdash Q;Q_1,N_1 $}
		\end{prooftree}
Case 2. $ A=\Box B$ and the last rule is	L$\Box $:
\begin{prooftree}
	\AXC{$ \D' $}
	\noLine
	\UIC{$ B,\Box B,\Box B,\Gamma;P\vdash Q;\Delta $}
	\RL{L$\Box $,}
	\UIC{$\Box B,\Box B,\Gamma;P\vdash Q;\Delta $}
\end{prooftree}
we have
\begin{prooftree}
	\AXC{$ \D' $}
	\noLine
	\UIC{$ B,\Box B,\Box B,\Gamma;P\vdash Q;\Delta $}
	\RL{IH}
	\UIC{$ B,\Box B,\Gamma;P\vdash Q;\Delta $}
	\RL{L$\Box $.}
	\UIC{$\Box B,\Gamma;P\vdash Q;\Delta $}
\end{prooftree}
Case 3. $ A=B\rightarrow C $ and the last rule is	L$\rightarrow $:
\begin{prooftree}
	\AXC{$ \D_1 $}
	\noLine
	\UIC{$ B\rightarrow C,\Gamma;P\vdash Q;\Delta,B $}
	\AXC{$ \D_2 $}
	\noLine
	\UIC{$ C,B\rightarrow C,\Gamma;P\vdash Q;\Delta $}
	\RL{L$\rightarrow $,}
	\BIC{$B\rightarrow C,B\rightarrow C,\Gamma;P\vdash Q;\Delta $}
\end{prooftree}		
By the inversion lemma applied to the first premise, $ \Gamma;P\vdash Q;\Delta,B,B $, and
applied to the second premise, $ C,C,\Gamma;P\vdash Q;\Delta $. We then use the induction hypothesis and obtain\\ $ \Gamma;P\vdash Q;\Delta,B $, A and $ C,\Gamma;P\vdash Q;\Delta $. Thus by L$\rightarrow $, we have $ B\rightarrow C,\Gamma;P\vdash Q;\Delta $.	\\
The other cases are proved by a similar argument.
	}
\end{proof}
In the rest of this section, we prove the admissibility of the general versions of the rules R$\Box $ and L$\Diamond $  that is also required for the proof of the  admissibility of the cut rule.
\begin{Lem}\label{General}
	The general versions of the rules R$\Box $  and L$\Diamond $,
	\begin{center}
		\AXC{$ M;\Gamma,P\vdash Q,\Delta;N, A $}
		\RL{R$\Box^G $}
		\UIC{$M,\Gamma;P\vdash Q;\Delta,N,\Box A $}
		\DP
		$ \quad $
		\AXC{$A,M;\Gamma,P\vdash Q,\Delta;N  $}
		\RL{L$\Diamond^G $}
		\UIC{$ \Diamond A,M,\Gamma;P\vdash Q;\Delta,N $}
		\DP
	\end{center}
	are admissible, where $ \Gamma$ and  $\Delta $ are multisets of arbitrary formulas.
\end{Lem}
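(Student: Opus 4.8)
The plan is to prove the rule R$\Box^G$ in detail and obtain L$\Diamond^G$ by the evident duality (swap $\Box$ with $\Diamond$, antecedent with succedent, $\wedge$ with $\vee$, and the left with the right clauses of Lemma \ref{ezaf1}). Throughout I read every sequent in the block notation as in Subsection \ref{Another version}, so that the premise of R$\Box^G$ stands for $M,\Diamond\bigwedge(P,\neg Q),\Diamond\bigwedge(\Gamma,\neg\Delta)\vdash N,A$ and its conclusion for $M,\Gamma,\Diamond\bigwedge(P,\neg Q)\vdash\Delta,N,\Box A$; note in particular that a formula $\neg B$ sitting in the left part of a block is literally the same sequent as $B$ sitting in the right part, and conversely. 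The block $\Diamond\bigwedge(P,\neg Q)$ plays no role and is carried along passively everywhere below. The argument is by induction on $\mu(\Gamma,\Delta):=\sum_{B\in\Gamma\cup\Delta}|B|$, the total number of connectives occurring in $\Gamma\cup\Delta$. If every formula of $\Gamma\cup\Delta$ is atomic (the constants being trivial, e.g.\ $\bot\in\Gamma$ makes the conclusion an axiom), then $M,N$ are modal and the conclusion $M,\Gamma;P\vdash Q;\Delta,N,\Box A$ is exactly the conclusion of an instance of R$\Box$ (taking $\Gamma,\Delta,P,Q$ for $P_1,Q_1,P_2,Q_2$), whose premise is exactly the given $M;\Gamma,P\vdash Q,\Delta;N,A$; so the base case is immediate.

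For the induction step, fix a non-atomic $B\in\Gamma\cup\Delta$; by the remark above I may assume $B$ occurs in $\Gamma$, write $\Gamma=B,\Gamma_0$, and put $C:=\bigwedge(\Gamma_0,\neg\Delta)$, so the premise is $M,\Diamond\bigwedge(P,\neg Q),\Diamond(B\wedge C)\vdash N,A$. If $B$ is modal it is stripped off the block: the matching clause of Lemma \ref{ezaf1} --- (17),(19) for $B=\Diamond B',\Box B'$ and (21),(23) for $B=\neg\Diamond B',\neg\Box B'$, applied with the inert context on both sides --- turns the premise into the R$\Box^G$-premise in which $B$ has been moved into $M$ (respectively its de Morgan mate into $N$) while $\Gamma_0$ is left; since $|B|\ge1$ this strictly lowers $\mu$, and the induction hypothesis returns precisely the desired conclusion. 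If $B$ is formed by a propositional connective I do not apply Lemma \ref{ezaf1} to $B$ directly but peel that connective off the \emph{conclusion} with the corresponding ordinary rule of $\text{\rm G3{\scriptsize S5}}^;$ (L$\wedge$, L$\vee$, L$\rightarrow$, L$\neg$, or, once a subformula has been shifted into $\Delta$, the matching right rule): the goal reduces to deriving one or two R$\Box^G$-conclusions built only from the immediate subformulas of $B$, hence of strictly smaller $\mu$, and the premises these instances require are obtained from $M,\Diamond\bigwedge(P,\neg Q),\Diamond(B\wedge C)\vdash N,A$ by the corresponding propositional clause of Lemma \ref{ezaf1} --- e.g.\ (1) and (3) for $B_1\vee B_2$, (11) and (13) for $B_1\rightarrow B_2$, (5) for $\neg(B_1\vee B_2)$, and a mere rebracketing of the block for $B_1\wedge B_2$. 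Applying the induction hypothesis to each and then re-assembling by the peeled-off rule yields the conclusion.

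Two places call for care. The first is the measure: one must not treat implications (or negated conjunctions/implications) through Lemma \ref{ezaf1} alone, since its clause for an implication conjunct replaces $B_1\rightarrow B_2$ by $\neg B_1$, whose complexity need not drop; decomposing these connectives on the conclusion side removes the problem because there only genuine subformulas appear and $\mu$ falls by at least one at each step. The second is the block bookkeeping: every application of Lemma \ref{ezaf1} changes which block a formula belongs to and which of the conventions of Subsection \ref{Another version} are in force, so after each step one must check that the sequent is again exactly of the shape R$\Box^G$ demands, with the block of $\Gamma\cup\Delta$ intact beside the inert block of $P\cup Q$; this is routine given the correspondence-and-relatedness discussion of Subsection \ref{Another version} but has to be recorded. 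I expect this bookkeeping, together with arranging that $\mu$ decreases uniformly across all the cases, to be the only real difficulty; once it is in place the induction closes, and L$\Diamond^G$ follows by duality.
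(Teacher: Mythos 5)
Your proposal is correct and follows essentially the same route as the paper: induction on the total complexity of $\Gamma\cup\Delta$, peeling the outermost connective of one formula off the conclusion with the ordinary logical rules and off the premise with the block-rewriting lemmas (you cite Lemma \ref{ezaf1}; the paper uses its semicolon-notation counterparts, Lemmas \ref{sadeh} and \ref{lem3.7}), bottoming out in a literal instance of R$\Box$ once $\Gamma,\Delta$ are atomic. The one clause your toolkit is missing is double negation (needed for $\neg B'\in\Delta$, where the block carries $\neg\neg B'$): it is absent from Lemma \ref{ezaf1} but available as Lemma \ref{sadeh}(10)--(11) and is proved exactly like the clauses you do cite.
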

Before we prove this lemma, we need  to state the  following lemmas. 
\begin{Lem}\label{sadeh}
	The following rules are admissible, where  the  formulas $ A\circ B $, $ \circ\in \{\wedge,\vee,\rightarrow\} $, in the premises, and  $ A $ and $ B $   in the conclusions of the rules have the same related formulas.
	\begin{align*}
	&\AXC{$ \Gamma;A\wedge B,P\vdash Q;\Delta $}
	\LL{{\rm (1)}}
	\UIC{$ \Gamma;A,B,P\vdash Q;\Delta $}
	\DP
	&&
	\AXC{$ \Gamma;P\vdash Q,A\vee B;\Delta $}
	\LL{{\rm (2)}}
	\UIC{$ \Gamma;P\vdash Q,A,B;\Delta $}
	\DP
	 \\[0.15cm]
	&\AXC{$ \Gamma;A\vee B,P\vdash Q;\Delta $}
	\LL{{\rm (3)}}
	\UIC{$ \Gamma;A,P\vdash Q;\Delta $}
	\DP
	&&
	\AXC{$ \Gamma;P\vdash Q,A\wedge B;\Delta $}
	\LL{{\rm (4)}}
	\UIC{$ \Gamma;P\vdash Q,A;\Delta $}
	\DP
	\\
	&\AXC{$ \Gamma;A\vee B,P\vdash Q;\Delta $}
	\LL{{\rm (5)}}
	\UIC{$ \Gamma;B,P\vdash Q;\Delta $}
	\DP
	&&
	\AXC{$ \Gamma;P\vdash Q,A\wedge B;\Delta $}
	\LL{{\rm (6)}}
	\UIC{$ \Gamma;P\vdash Q,B;\Delta $}
	\DP
	\\[0.15cm]
	&\AXC{$ \Gamma;A\rightarrow B,P\vdash Q;\Delta $}
	\LL{{\rm (7)}}
	\UIC{$ \Gamma;B,P\vdash Q;\Delta $}
	\DP
	&&
	\AXC{$ \Gamma;P\vdash Q,A\rightarrow  B;\Delta $}
	\LL{{\rm (8)}}
	\UIC{$ \Gamma;A,P\vdash Q,B;\Delta $}
	\DP
	\\[0.15cm]
	&\AXC{$\Gamma;A\rightarrow B,P\vdash Q;\Delta  $}
	\LL{{\rm (9)}}
	\UIC{$ \Gamma; P\vdash Q, A;\Delta $}
	\DP
	&&
	\AXC{$ \Gamma;P\vdash Q,\neg A;\Delta $}
	\LL{{\rm (10)}}
	\UIC{$ \Gamma; A,P\vdash Q ;\Delta $}
	\DP
	\\[0.15cm]
	&\AXC{$ \Gamma;\neg A,P\vdash Q;\Delta $}
	\LL{{\rm (11)}}
	\UIC{$ \Gamma;  P\vdash Q, A;\Delta $}
	\DP
	&&
	\end{align*}
\end{Lem}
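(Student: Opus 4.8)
The plan is to prove all eleven rules simultaneously by induction on the height of the derivation $\D$ of the premise, following the pattern of the proof of Lemma~\ref{ezaf1}. I will describe rule~(1) in detail; the rest are symmetric variants. In the conclusion $\Gamma;A\wedge B,P\vdash Q;\Delta$ the formula $A\wedge B$ occurs in the middle part, hence it belongs to one block of the underlying partition, whose corresponding formula is either $\Diamond\bigwedge X'_i$ (if $A\wedge B$ lies in some $X_i$) or $\Box\bigvee Y'_j$ (if it lies in some $Y_j$); call this the \emph{governing} formula of $A\wedge B$. A middle-part occurrence can leave the middle part, reading $\D$ upwards, only through an application of L$\Diamond^{;}$ or R$\Box^{;}$ that resolves its block, so the cases below are exhaustive.

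If $\D$ is an axiom, then the governing formula of $A\wedge B$, being a $\Diamond$- or a $\Box$-formula, is not the atomic (or $\bot$) principal formula of that axiom; replacing $A\wedge B$ by $A,B$ within its block leaves both sides of the sequent unchanged as far as the axiom is concerned, so $\Gamma;A,B,P\vdash Q;\Delta$ is again an axiom. If the last rule $R$ of $\D$ does not have the governing formula of $A\wedge B$ as principal, then $A\wedge B$ and its block survive in the premise(s) of $R$ (the block merely being relabelled when $R$ is an L$\Diamond^{;}$/R$\Box^{;}$ resolving some \emph{other} block, or an L$\Diamond$/R$\Box$ creating a new block, since unrelated middle formulas keep their relatedness); we apply the induction hypothesis to each premise and reapply $R$.

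The interesting case is when the governing formula of $A\wedge B$ \emph{is} principal in the last rule; then that rule is L$\Diamond^{;}$ (if $A\wedge B\in X_i$) or R$\Box^{;}$ (if $A\wedge B\in Y_j$), resolving precisely the block of $A\wedge B$. In the premise of this rule the middle-left members of the resolved block have been moved into the first part of the sequent, so $A\wedge B$ now stands in the antecedent. Since L$\wedge$ is height-preserving invertible (Lemma~\ref{inversion}), we obtain a derivation of no greater height of the premise with $A\wedge B$ replaced by $A,B$ in the antecedent, and reapplying the same rule L$\Diamond^{;}$ (resp.\ R$\Box^{;}$), now with $A,B$ in place of $A\wedge B$ in the block, delivers $\Gamma;A,B,P\vdash Q;\Delta$ (this also works when the block was the singleton $\{A\wedge B\}$, since then one simply forms and resolves the block $\{A,B\}$, whose corresponding formula $\Diamond\bigwedge\{A,B\}$ matches $\Diamond\bigwedge\{A\wedge B\}$).

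For the remaining items one uses, in place of L$\wedge$, the height-preserving invertible propositional rule matching the displayed connective and the side of $\vdash$ on which the displayed formula sits once its block is resolved: R$\vee$ for~(2), L$\vee$ for~(3) and~(5) (taking the left, resp.\ the right, premise produced by inverting L$\vee$), R$\wedge$ for~(4) and~(6), L$\rightarrow$ for~(7) and~(9), R$\rightarrow$ for~(8), and R$\neg$, L$\neg$ for~(10) and~(11); the dual items~(2),(4),(6),(8) use R$\Box^{;}$ where the argument above used L$\Diamond^{;}$. The main obstacle is not any individual step but the partition/relatedness bookkeeping in the principal case: one must check that, after putting the decomposed formula back into the middle part, the resulting instance of L$\Diamond^{;}$ / R$\Box^{;}$ is legitimate (its block is well-formed and its corresponding formula is indeed $\Diamond\bigwedge$ / $\Box\bigvee$ of the new multiset) and that the relatedness of every \emph{other} middle-part formula is left untouched — routine, but it has to be verified for each connective and each side, which is why the statement enumerates so many cases.
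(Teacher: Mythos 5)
Your proof is correct and follows essentially the route the paper takes: the paper omits a separate proof of Lemma \ref{sadeh}, but it identifies this lemma (together with Lemma \ref{lem3.7}) as the semicolon-notation form of Lemma \ref{ezaf1}, whose printed proof is exactly your induction on the height of the derivation with the principal/non-principal case split, invoking the propositional decompositions (Lemma \ref{lem 3.8}, i.e.\ height-preserving invertibility of the propositional rules) when the governing formula $\Diamond\bigwedge X'_i$ or $\Box\bigvee Y'_j$ becomes principal. Your only looseness --- describing the principal case as an instance of L$\Diamond^;$/R$\Box^;$ even though those rules are stated only for atomic blocks, so that the resolution of a block containing $A\circ B$ is really an L$\Diamond$/R$\Box$ step whose premise carries the whole undecomposed corresponding formula --- is bridged by the invertibility you already cite (Lemma \ref{inversion}), and you flag the required partition bookkeeping yourself.
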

The following lemma  state that, the rules of the above lemma are invertible.
\begin{Lem}\label{sadeh.inv}
	The following rules are admissible,   the  formulas  $ A $ and $ B $ in the premises, and $ A\circ B $, $ \circ\in \{\wedge,\vee,\rightarrow\} $, in the conclusions of the rules have the same related formulas.
	\begin{align*}
	&\AXC{$ \Gamma;A,B,\Gamma'\vdash \Delta';\Delta $}
	\LL{{\rm (1)}}
	\UIC{$ \Gamma;A\wedge B,\Gamma'\vdash \Delta';\Delta $}
	\DP
	&&
	\AXC{$ \Gamma;\Gamma'\vdash \Delta',A;\Delta $}
	\AXC{$ \Gamma;\Gamma'\vdash \Delta',B;\Delta $}
	\LL{{\rm (2)}}
	\BIC{$ \Gamma;\Gamma'\vdash \Delta',A\wedge B;\Delta $} 
	\DP
	\\[0.15cm]
	&\AXC{$ \Gamma;A,\Gamma'\vdash \Delta';\Delta $}
	\AXC{$ \Gamma;B,\Gamma'\vdash \Delta';\Delta $}
	\LL{{\rm (3)}}
	\BIC{$ \Gamma;A\vee B,\Gamma'\vdash \Delta';\Delta $}
	\DP
	&&
	\AXC{$ \Gamma;\Gamma'\vdash \Delta',A,B;\Delta $}
	\LL{{\rm (4)}}
	\UIC{$ \Gamma;\Gamma'\vdash \Delta',A\vee B;\Delta $}
	\DP
	\\[0.15cm]
	&\AXC{$ \Gamma; \Gamma'\vdash \Delta', A;\Delta $}
	\AXC{$ \Gamma;B,\Gamma'\vdash \Delta';\Delta $}
	\LL{{\rm (5)}}
	\BIC{$ \Gamma;A\rightarrow B,\Gamma\vdash \Delta';\Delta $}    
	\DP
	&&
	\AXC{$ \Gamma;A,\Gamma'\vdash \Delta',B;\Delta $}
	\LL{{\rm (6)}}
	\UIC{ $ \Gamma;\Gamma'\vdash \Delta',A\rightarrow  B;\Delta $}  
	\DP
	\\[0.15cm]
	&\AXC{$ \Gamma;\Gamma'\vdash \Delta', A;\Delta $}
	\LL{{\rm (7)}}
	\UIC{$ \Gamma; \neg A,\Gamma'\vdash \Delta' ;\Delta $}
	\DP
	&&
	\AXC{$ \Gamma; A,\Gamma'\vdash \Delta';\Delta $}
	\LL{{\rm (8)}}
	\UIC{$ \Gamma;  \Gamma'\vdash \Delta', \neg A;\Delta $}
	\DP
	\end{align*}
\end{Lem}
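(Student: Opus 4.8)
The plan is to prove all eight items by induction — on the height of the derivation of the premise for the unary items (1), (4), (6), (7), (8), and on the sum of the heights of the two given derivations for the binary items (2), (3), (5) — in parallel with, and using, the invertibility lemma (Lemma \ref{inversion}) and Lemma \ref{sadeh}. The key point to keep in mind is that $A$, $B$ and $A\circ B$ sit in the middle part of the sequents, so in the unfolded sequent they never occur literally: they appear only inside the corresponding formula $\Diamond\bigwedge X'_i$ or $\Box\bigvee Y'_j$ of the class $Z$ that contains them. Since such a corresponding formula is neither an atom nor $\bot$, the principal formula of any axiom must be among the visible formulas, and those are untouched when $\{A,B\}$ is replaced by $A\circ B$ inside $Z$; hence the base case (premise an axiom) is immediate.

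For the induction step of a unary item, let $R$ be the last rule of the given derivation. If $R$ does not act on the corresponding formula of $Z$ — this covers every propositional rule applied to a visible formula, the rules R$\Diamond$ and L$\Box$, and any instance of L$\Diamond$, R$\Box$, L$\Diamond^{;}$, R$\Box^{;}$ whose principal formula is not that corresponding formula — we apply the induction hypothesis to the premise(s) of $R$, which still contain $Z$ with the same members, and reapply $R$; replacing $\{A,B\}$ by $A\circ B$ in $Z$ changes neither the multisets $M$, $N$ nor the atomic side conditions, so $R$ stays applicable. The decisive case is when $R$ is L$\Diamond^{;}$ or R$\Box^{;}$ with principal formula the corresponding formula of $Z$ itself: such a rule exposes the entire content of $Z$, so in its premise $A$ and $B$ (or $A$ alone, for items (7) and (8)) appear as genuine formulas — in the antecedent when they lie to the left of the middle semicolon, in the succedent otherwise. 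From that premise we apply the single propositional rule that rebuilds the connective — L$\wedge$ for (1), R$\vee$ for (4), R$\rightarrow$ for (6), L$\neg$ for (7), R$\neg$ for (8) — to reach the exposed form of the conclusion, and then reapply L$\Diamond^{;}$ or R$\Box^{;}$ to $Z$, now carrying $A\circ B$. No induction hypothesis is needed in this case.

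The binary items are handled in the same spirit, but now with two derivations $\mathcal{D}_1$, $\mathcal{D}_2$ of the $A$- and $B$-premises that must be driven in lockstep. When the last rule of $\mathcal{D}_1$ is a propositional rule on a visible formula $C$, or one of R$\Diamond$, L$\Box$, L$\Diamond^{;}$, R$\Box^{;}$ — all height-preserving invertible — we apply the matching inversion to $\mathcal{D}_2$, invoke the induction hypothesis, and reapply the rule; and if that rule is the instance of L$\Diamond^{;}$ or R$\Box^{;}$ exposing $Z$, then inverting the same instance in $\mathcal{D}_2$ leaves both derivations with $Z$ exposed and with the identical repacked context, after which we combine the now-genuine $A$ and $B$ by R$\wedge$ for (2), L$\vee$ for (3) or L$\rightarrow$ for (5) and reapply the modal rule. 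Note that in items (5)--(8) the formula $A\circ B$ and one of $A$, $B$ end up on opposite sides of the middle semicolon, yet a single propositional rule still does the job, because exposing a class always sends its left members to the antecedent and its right members to the succedent, irrespective of whether the class is a $\Diamond$-class or a $\Box$-class.

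Two points are where I expect the real work to lie. First, the partition bookkeeping: one has to verify that exposing $Z$ (or any other class) yields literally the same new $\Box$-class from the previously exposed atoms, and the same remaining classes, in the ``$A,B$'' and the ``$A\circ B$'' version of the sequent, so that the modal rule can be reapplied with identical side formulas; this is routine but must be spelled out. Second, in the binary items, keeping $\mathcal{D}_2$ in step with $\mathcal{D}_1$ across a step by L$\Diamond$ or R$\Box$ — precisely the two rules \emph{not} known to be height-preserving invertible — needs extra care: I expect this to be dealt with either by first reducing the visible parts of both sequents to atomic and modal formulas via the invertibilities already at hand (after which only the invertible L$\Diamond^{;}$, R$\Box^{;}$ remain to be synchronized), or by a slightly finer simultaneous induction using the partial inversions of Lemma \ref{*} and Corollary \ref{**}. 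Once these are settled, every remaining case is routine and mirrors the corresponding case of Lemma \ref{sadeh}.
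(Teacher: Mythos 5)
Your overall strategy --- induction on the height of the derivation(s) of the premise(s), with the decisive case being the instance of L$\Diamond^{;}$ or R$\Box^{;}$ that exposes the class containing $A$ and $B$ --- is exactly what the paper intends: its own proof of this lemma consists of the single sentence ``the proof is by induction on the height of derivation in each case,'' so your treatment of the unary items (1), (4), (6), (7), (8) is in fact more detailed than the paper's. For those items I see no problem: the base case (an axiom's principal formula must be visible, since the members of a class occur only inside a corresponding $\Diamond\bigwedge$ or $\Box\bigvee$ formula), the commutation with rules not acting on the class, and the expose/apply-one-propositional-rule/repack step are all correct, including your observation that exposing a class sends its left members to the antecedent and its right members to the succedent regardless of whether it is a $\Diamond$-class or a $\Box$-class, and that the repacked new class built from the previously visible atoms is the same in both versions of the sequent.

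The gap is where you suspected it, in the binary items (2), (3), (5). Your first proposed fix does not work as stated: even after the visible parts of both sequents are reduced to atomic and modal formulas, the rules L$\Diamond$ and R$\Box$ still apply to visible formulas $\Diamond C$ and $\Box C$, so it is not true that ``only the invertible L$\Diamond^{;}$, R$\Box^{;}$ remain to be synchronized.'' The case that genuinely needs an argument is when \emph{both} derivations end in L$\Diamond$ or R$\Box$; if at most one does, you can always drive the induction from the other side, since every other rule (including the exposing instances of L$\Diamond^{;}$ and R$\Box^{;}$) is height-preserving invertible by Lemma \ref{inversion}, and once the class is exposed in both derivations a single application of R$\wedge$, L$\vee$ or L$\rightarrow$ finishes without any induction hypothesis. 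In the remaining case, if the two principal formulas are the same occurrence the two premises move the same visible atoms into the same new class, so the induction hypothesis applies directly and the rule is reapplied; but if they are different occurrences (say $\Diamond C$ and $\Diamond C'$, or one L$\Diamond$ and one R$\Box$), you must first convert one derivation into one ending with the other's principal formula. That is precisely a non-height-preserving inversion of L$\Diamond$ or R$\Box$, which Lemma \ref{inversion} explicitly excludes and which the paper never proves; it can be extracted from Lemma \ref{*} together with a rule-permutation argument in the style of Example \ref{ex1 for 3.13}, or from a separate lemma permuting two adjacent L$\Diamond$/R$\Box$ inferences with distinct principal formulas, but some such statement has to be formulated and folded into the induction measure. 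Neither your sketch nor the paper supplies it.
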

\begin{proof}
The proof is by  induction on the height of  derivation in each case.	
\end{proof}
\begin{Lem} \label{lem3.7}
	The following rules are  admissible.
	\begin{align*}
	&\AXC{$ \Gamma,\Diamond A;P\vdash Q;\Delta $}
	\LL{{\rm (1)}}
	\UIC{$ \Gamma;A,P\vdash Q;\Delta $}
	\DP
	&&
	\AXC{$ \Gamma;P\vdash Q;\Box A,\Delta $}
	\LL{{\rm (2)}}
	\UIC{$ \Gamma;P\vdash Q,A;\Delta $}
	\DP
	\\[0.15cm]
	&\AXC{$ \Gamma;\Diamond A,P\vdash Q;\Delta $}
	\LL{{\rm (3)}}
	\UIC{$ \Gamma,\Diamond A;P\vdash Q;\Delta $}
	\DP
	&&
	\AXC{$ \Gamma;P\vdash Q,\Diamond A;\Delta $}
	\LL{{\rm (4)}}
	\UIC{$ \Gamma;P\vdash Q;\Diamond A,\Delta $}
	\DP
	\\[0.15cm]
	&\AXC{$ \Gamma;\Box A,P\vdash Q;\Delta $}
	\LL{{\rm (5)}}
	\UIC{$ \Gamma,\Box A;P\vdash Q;\Delta $}
	\DP
	&&
	\AXC{$ \Gamma;P\vdash Q,\Box A;\Delta $}
	\LL{{\rm (6)}}
	\UIC{$ \Gamma;P\vdash Q;\Box A,\Delta $}
	\DP
	\end{align*}
\end{Lem}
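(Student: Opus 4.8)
The plan is to work entirely at the level of ordinary $\text{G3{\scriptsize S5}}^;$ sequents, exploiting that a $;$-sequent $\Gamma;P\vdash Q;\Delta$ is, once a partition of its middle part is fixed, just shorthand for $\Gamma,\Diamond\bigwedge X'_1,\dots,\Diamond\bigwedge X'_n\vdash\Box\bigvee Y'_1,\dots,\Box\bigvee Y'_m,\Delta$. Under this unfolding each of the six moves becomes an instance of the already–proved Lemma \ref{ezaf1} (with Lemma \ref{*} covering the degenerate cases); so the lemma is really Lemma \ref{ezaf1} read through the $;$-notation, and no fresh induction is needed.

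First I would dispose of parts (1) and (2). Here I let the displaced formula $A$ form a fresh singleton block of the middle part. Since $A$ then sits on the left (resp.\ right) of $\vdash$, its corresponding formula is $\Diamond\bigwedge\{A\}=\Diamond A$ (resp.\ $\Box\bigvee\{A\}=\Box A$), which is exactly the formula $\Diamond A$ taken out of $\Gamma$ (resp.\ $\Box A$ taken out of $\Delta$). Hence the unfolded premise and the unfolded conclusion are literally the same sequent; the only content of (1) and (2) is that one is allowed to re-read the sequent with $A$ placed in the middle part, so that it may subsequently be acted on by the rules L$\Diamond^{;}$ and R$\Box^{;}$.

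Next, parts (3) and (5). In $\Gamma;\Diamond A,P\vdash Q;\Delta$ (resp.\ $\Gamma;\Box A,P\vdash Q;\Delta$) the formula $\Diamond A$ (resp.\ $\Box A$) occupies some block $X_i$ of the left middle part, whose corresponding formula is therefore $\Diamond(\Diamond A\wedge C)$ (resp.\ $\Diamond(\Box A\wedge C)$) with $C$ the conjunction of the remaining members of $X'_i$. Unfolding and applying Lemma \ref{ezaf1}(17) (resp.\ Lemma \ref{ezaf1}(19)) rewrites this block formula as the pair $\Diamond A,\Diamond C$ (resp.\ $\Box A,\Diamond C$); re-reading $\Diamond C$ as the corresponding formula of the block $X_i\setminus\{A\}$, this is precisely the unfolding of $\Gamma,\Diamond A;P\vdash Q;\Delta$ (resp.\ $\Gamma,\Box A;P\vdash Q;\Delta$). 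When $X_i=\{A\}$ is a singleton there is no $C$ and I instead invoke Lemma \ref{*}(i), with $\Diamond A$ (resp.\ $\Box A$) in the rôle of the $\Diamond$-formula, to pass from $\Diamond\Diamond A$ (resp.\ $\Diamond\Box A$) to $\Diamond A$ (resp.\ $\Box A$). Parts (4) and (6) are the succedent-side duals: now $\Diamond A$ (resp.\ $\Box A$) lies in a block $Y_j$ of the right middle part with corresponding formula $\Box(\Diamond A\vee D)$ (resp.\ $\Box(\Box A\vee D)$), and Lemma \ref{ezaf1}(22) (resp.\ Lemma \ref{ezaf1}(24)) rewrites it as $\Diamond A,\Box D$ (resp.\ $\Box A,\Box D$), moving $\Diamond A$ (resp.\ $\Box A$) out into $\Delta$; the singleton case is handled by Lemma \ref{*}(ii).

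The only part that requires care is the bookkeeping: one must check that the ambient context $\Gamma$ appearing in the statements of Lemma \ref{ezaf1} may legitimately absorb everything else in the unfolded sequent — the outer parts $M,N$ and the corresponding formulas $\Diamond\bigwedge X'_k$, $\Box\bigvee Y'_k$ of all the other blocks — which it can, since those lemmas are stated for arbitrary $\Gamma,\Delta$ (and it is harmless that the auxiliary $C$, $D$ are conjunctions/disjunctions of literals, possibly negated atoms, as Lemma \ref{ezaf1} is formulated for arbitrary formulas). Once the singleton blocks are routed through Lemma \ref{*} and the remaining blocks through the matching clause of Lemma \ref{ezaf1}, there is nothing left to prove.
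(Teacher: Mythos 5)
Your proposal is correct and is essentially the paper's own (implicit) argument: the paper gives no separate proof of Lemma \ref{lem3.7}, remarking instead that Lemma \ref{ezaf1} is exactly this lemma (together with Lemma \ref{sadeh}) written in the original notation, so unfolding the semicolon, applying the matching clause of Lemma \ref{ezaf1} (or Lemma \ref{*} for singleton blocks), and refolding is the intended route. The only point to add is that your case split is not exhaustive: a formula standing to the left of $\vdash$ in the middle part may also lie in a $Y$-block (its corresponding formula then being $\Box(\neg\Diamond A\vee C)$ or $\Box(\neg\Box A\vee C)$ in the succedent), and dually a right-middle formula may lie in an $X$-block; these remaining configurations are dispatched by clauses (18), (20), (21) and (23) of Lemma \ref{ezaf1}, which exist for precisely this purpose.
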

Actually Lemma \ref{ezaf1} is an expression of Lemmas  \ref{sadeh} and  \ref{lem3.7}  in the  original notation, and Lemma \ref{lem 3.8} is its correspondence  without modals.   

We now deduce Lemma \ref{General} from Lemmas \ref{sadeh} and  \ref{lem3.7}.
\begin{proof}[{\rm \textbf{Proof of Lemma \ref{General}}}]
	The proof is by  induction on complexity of the formulas in
	$ \Gamma$ and $\Delta $. We prove the admissibility of  the rule
	R$\Box $, the rule
	L$\Diamond $
	is  treated symmetrically.
	
	Case 1. Let
	$ \Gamma=\Gamma_1,B\wedge C $. We have
	\begin{prooftree}
		\AXC{$ M;\Gamma_1,B\wedge C,P\vdash Q,\Delta;N, A $}
		\RL{Lemma \ref{sadeh} }
		\UIC{$ M;\Gamma_1,B, C,P\vdash Q,\Delta;N, A $}
		\RL{IH}
		\UIC{$ M,\Gamma_1,B, C;P\vdash Q;\Delta,N, \Box A $}
		\RL{L$\wedge.$}
		\UIC{$ M,\Gamma_1,B\wedge C;P\vdash Q;\Delta,N, \Box A $}
	\end{prooftree}
	
	Case 2. Let
	$ \Gamma=\Gamma_1,B\vee C $. We have
	\begin{prooftree}
		\AXC{$ M;\Gamma_1,B\vee C,P\vdash Q,\Delta;N, A $}
		\RL{ \ref{sadeh}}
		\UIC{$ M;\Gamma_1,B ,P\vdash Q,\Delta;N, A $}
		\RL{IH,}
		\UIC{$ M,\Gamma_1,B ;P\vdash Q;\Delta,N, \Box A $}
		\AXC{$M;\Gamma_1,B\vee C,P\vdash Q,\Delta;N, A $}
		\RL{ \ref{sadeh}}
		\UIC{$ M;\Gamma_1,C,P\vdash Q,\Delta;N, A $}
		\RL{IH,}
		\UIC{$ M,\Gamma_1,C;P\vdash Q;\Delta,N, \Box A $}
		\RL{L$\vee $.}
		\BIC{$ M,\Gamma_1,B\vee C;P\vdash Q; \Delta,N,\Box A. $}
	\end{prooftree}
	Case 3.	Let
	$ \Gamma=\Gamma_1,B\rightarrow C $. We have
	\begin{prooftree}
		\AXC{$M;\Gamma_1,B\rightarrow C,P\vdash Q,\Delta;N, A $}
		\RL{ \ref{sadeh}}
		\UIC{$ M;\Gamma_1,C ,P\vdash Q,\Delta;N, A$}
		\RL{IH,}
		\UIC{$ M,\Gamma_1,C ;P\vdash Q;\Delta,N, \Box A $}
		\AXC{$M;\Gamma_1,B\rightarrow C,P\vdash Q,\Delta;N, A $}
		\RL{ \ref{sadeh}}
		\UIC{$ M;\Gamma_1  ,P\vdash Q,\Delta, B;N, A$}
		\RL{IH,}
		\UIC{$ M,\Gamma_1  ;P\vdash Q ;B,\Delta,N, \Box A$}
		\RL{L$\rightarrow $.}
		\BIC{$ M,\Gamma_1,B\rightarrow C;\Gamma_2\vdash Q; \Delta,N,\Box A. $}
	\end{prooftree}
	Case 4. Let $ \Gamma=\Gamma_1,\Diamond B $. We have
	\begin{prooftree}
		\AXC{$ M;\Gamma_1,\Diamond B,P\vdash  Q,\Delta;N,A $}
		\RL{Lemma \ref{lem3.7}}
		\UIC{$ M,\Diamond B;\Gamma_1,P\vdash  Q,\Delta;N,A $}
		\RL{IH.}
		\UIC{$ M,\Diamond B,\Gamma_1;P\vdash  Q;\Delta,N,A $}
	\end{prooftree}
	The other cases are proved similarly.
\end{proof}
In the above lemma, similar to   Example \ref{ex1 for 3.13} the deduction  of the conclusion sequent is produced by permutations of the rules in the deduction of the premise sequent.  The following example  is provided to show this permutation.
\begin{Examp}\label{ex1 for 3.12}
	{\rm
	We show that  the following  rule  is admissible.
	\begin{prooftree}
		\AXC{$M,\Diamond((r\wedge\Box s)\wedge (\neg (\Box r\rightarrow s))) \vdash p\wedge (q\vee\Box s), N $}
		\RL{R$ \Box^G $}
		\UIC{$  M,r\wedge\Box s\vdash\Box r\rightarrow s, \Box(p\wedge (q\vee\Box s)),N $}
		\end{prooftree}
where the premise is derived  by $ \D $ as follows:
\begin{prooftree}
	\AXC{$ \D_1 $}
	\noLine
	\UIC{$M,\Box s,\Box r ;r \vdash s; p ,N $}
	\RL{R$ \Box^; $}
	\UIC{$M,r,\Box s,\Box r \vdash p; s ,N $}
	\RL{L$\wedge $}
	\UIC{$ M,r\wedge\Box s,\Box r \vdash p; s ,N $}
	\RL{R$\rightarrow $}
	\UIC{$ M,r\wedge\Box s \vdash p;\Box r\rightarrow s ,N$}
	\RL{L$\neg  $}
	\UIC{$M,r\wedge\Box s, \neg( \Box r\rightarrow s )\vdash p;N $}
	\RL{L$\wedge  $}
	\UIC{$ M,(r\wedge\Box s)\wedge (\neg( \Box r\rightarrow s ))\vdash p;N$}
	\RL{L$\Diamond $}
	\UIC{$ M,\Diamond((r\wedge\Box s)\wedge (\neg (\Box r\rightarrow s))) \vdash p, N  $}
	      \AXC{$ \D_2 $}
	      \noLine
	      \UIC{$ M,\Box s, \Box r;r \vdash  s; q,\Box s, N  $}
	      \RL{R$ \Box^; $}
	      \UIC{$ M,r,\Box s, \Box r \vdash  q; s,\Box s, N  $}
	      \RL{R$\rightarrow $}
	      \UIC{$ M,r,\Box s \vdash  q;\Box r\rightarrow s,\Box s, N  $}
	      \RL{L$ \wedge$,L$\neg $}
	      \UIC{$ M,r\wedge\Box s, \neg (\Box r\rightarrow s) \vdash  q;\Box s, N  $}
	      \RL{L$\wedge $}
	      \UIC{$ M,(r\wedge\Box s)\wedge (\neg (\Box r\rightarrow s)) \vdash  q;\Box s, N   $}
	      \RL{L$\Diamond $}
	      \UIC{$ M,\Diamond((r\wedge\Box s)\wedge (\neg (\Box r\rightarrow s))) \vdash  q,\Box s, N  $}
	      \RL{R$\vee $}
	      \UIC{$ M,\Diamond((r\wedge\Box s)\wedge (\neg (\Box r\rightarrow s))) \vdash  q\vee\Box s, N  $}
	  \RL{R$ \wedge $.}
	  \BIC{$ M,\Diamond((r\wedge\Box s)\wedge (\neg (\Box r\rightarrow s))) \vdash p\wedge (q\vee\Box s), N $}
	\end{prooftree}

	 Therefore by permutation of the rules we get the following derivation $ \D' $ for the conclusion
\begin{prooftree}
\AXC{$ \D_1 $}
\noLine
\UIC{$ M,\Box s,\Box r;r\vdash s; p,N $}
\AXC{$ \D_2 $}
\noLine
\UIC{$ M,\Box s,\Box r;r\vdash s; q,\Box s,N $}
\RL{R$\vee $}
\UIC{$ M,\Box s,\Box r;r\vdash s; q\vee\Box s,N $}
\RL{R$\wedge $}
\BIC{$ M,\Box s,\Box r;r\vdash s; p\wedge (q\vee\Box s),N $}
\RL{R$\Box $}
\UIC{$ M,r,\Box s,\Box r\vdash s, \Box(p\wedge (q\vee\Box s)),N $}
\RL{R$\rightarrow $}
\UIC{$ M,r,\Box s\vdash\Box r\rightarrow s, \Box(p\wedge (q\vee\Box s)),N $}
\RL{L$\wedge. $}
\UIC{$ M,r\wedge\Box s\vdash\Box r\rightarrow s, \Box(p\wedge (q\vee\Box s)),N $}	
\end{prooftree}
}
	\end{Examp}
\begin{Cor}\label{G;}
The following rules are admissible.
	\begin{center}
	\AXC{$ M,\Gamma_2;\Gamma_1,\Gamma_3\vdash \Delta_3,\Delta_1;\Delta_2,N $}
	\RL{R$\Box^{G;} $}
	\UIC{$M,\Gamma_1;\Gamma_2,\Gamma_3\vdash \Delta_3,\Delta_2;\Delta_1,N $}
	\DP
	$ \quad $
	\AXC{$M,\Gamma_2;\Gamma_1,\Gamma_3\vdash \Delta_3,\Delta_1;\Delta_2,N  $}
	\RL{L$\Diamond^{G;} $}
	\UIC{$ M,\Gamma_1;\Gamma_2,\Gamma_3\vdash \Delta_3,\Delta_2;\Delta_1,N $}
	\DP
\end{center}
where $ \Diamond\bigwedge(\Gamma_2,\neg \Delta_2) $ is the principle formula  in R$\Box^{G;} $ and $ \Box\bigvee(\neg \Gamma_2,\Delta_2) $ is the principle formula  in L$\Diamond^{G;} $.
\end{Cor}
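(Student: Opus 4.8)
The plan is to reduce both rules to Lemma~\ref{General} (the general versions R$\Box^G$ and L$\Diamond^G$) by a short preliminary manipulation with the propositional rules of $\text{\rm G3{\scriptsize S5}}^;$, and then to check that the bookkeeping of relatedness matches up. By the left/right duality of the system it is enough to establish R$\Box^{G;}$; the rule L$\Diamond^{G;}$ is handled in the same way with L$\neg$, L$\wedge$, L$\Diamond^G$ replaced by R$\neg$, R$\vee$, R$\Box^G$ and the assembled formula $\bigwedge(\Gamma_2,\neg\Delta_2)$ replaced by $\bigvee(\neg\Gamma_2,\Delta_2)$.

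So, given a derivation of the premise $M,\Gamma_2;\Gamma_1,\Gamma_3\vdash \Delta_3,\Delta_1;\Delta_2,N$ of R$\Box^{G;}$, I would first ``assemble'' the block that is to become the principal formula. Applying L$\neg$ once for each formula of $\Delta_2$ transfers $\Delta_2$ from the last part to the antecedent in negated form, and then iterated applications of L$\wedge$ conjoin the multiset $\Gamma_2,\neg\Delta_2$ now standing in the antecedent into the single formula $\bigwedge(\Gamma_2,\neg\Delta_2)$; none of these steps touches the middle part, so the relatedness there is unchanged (the degenerate cases where $\Gamma_2\cup\Delta_2$ is empty or a singleton are trivial or require no L$\wedge$). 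This produces a derivation of $\bigwedge(\Gamma_2,\neg\Delta_2),M;\Gamma_1,\Gamma_3\vdash \Delta_3,\Delta_1;N$. But this is exactly an instance of the premise of L$\Diamond^G$ from Lemma~\ref{General}, with ``$\Diamond A$'' taken to be $\Diamond\bigwedge(\Gamma_2,\neg\Delta_2)$, with $\Gamma_1$ playing the role of the moved antecedent multiset, $\Delta_1$ that of the moved succedent multiset, $\Gamma_3\vdash\Delta_3$ the inert middle context, and $M,N$ the required multisets of modal formulas. Applying L$\Diamond^G$ then yields a derivation of $\Diamond\bigwedge(\Gamma_2,\neg\Delta_2),M,\Gamma_1;\Gamma_3\vdash \Delta_3;\Delta_1,N$.

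It then remains to recognise this last sequent as the conclusion of R$\Box^{G;}$: reading the middle part of $M,\Gamma_1;\Gamma_2,\Gamma_3\vdash \Delta_3,\Delta_2;\Delta_1,N$ with the relatedness in which $\Gamma_2$ and $\Delta_2$ form one block --- whose corresponding formula is by definition $\Diamond\bigwedge(\Gamma_2,\neg\Delta_2)$ --- while the blocks of $\Gamma_3\cup\Delta_3$ are kept as they were, this expression denotes precisely the sequent just obtained, since by the Remark following Table~\ref{tabel 2} the choice of relatedness is immaterial for derivability. The one point to take care over, and the only real obstacle, is exactly this last bookkeeping step: verifying that the block $\Gamma_2\cup\Delta_2$ created in the conclusion, together with the blocks of $\Gamma_3\cup\Delta_3$ carried through unchanged and the block $\Gamma_1\cup\Delta_1$ in the premise, are matched consistently with the relatednesses forced by L$\Diamond^G$. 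Since these relatednesses are determined from the root of a derivation and may be chosen freely, no genuine difficulty arises, and in particular no new induction is needed beyond the one already carried out in the proof of Lemma~\ref{General}.
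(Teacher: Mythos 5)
Your argument is correct and matches the paper's (implicit) justification: the paper states Corollary \ref{G;} without proof as an immediate consequence of Lemma \ref{General}, and your derivation --- assembling $\bigwedge(\Gamma_2,\neg\Delta_2)$ by L$\neg$ and L$\wedge$, applying L$\Diamond^G$, and re-reading the result in the semicolon notation --- is exactly the intended reduction. The only point you gloss over, harmlessly, is that the inert middle context $\Gamma_3\vdash\Delta_3$ must first be re-read as its modal corresponding formulas absorbed into $M$ and $N$ before Lemma \ref{General} (whose middle part consists of atomic formulas) literally applies; this is pure bookkeeping of the kind the paper itself performs silently.
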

\section{Admissibility of the Cut rule}\label{sec cut}
In this section, we prove the admissibility of the cut rule and the completeness theorem.

The admissibility of the cut rule,
\begin{prooftree}
	\AXC{$ \Gamma\vdash\Delta,D $}
	\AXC{$ D,\Gamma'\vdash\Delta' $}
	\RL{Cut,}
	\BIC{$ \Gamma,\Gamma'\vdash\Delta,\Delta' $}
	\end{prooftree}
is proved  simultaneously with the following rule
	\begin{prooftree}
	\AXC{$ M,\Gamma_1;\Gamma_2,\Gamma_3\vdash \Delta_3, \Delta_2,D;\Delta_1,N $}
	\RL{$ \text{Cut}^; $}
	\AXC{$ M',\Gamma'_1; D,\Gamma'_2,\Gamma'_3\vdash \Delta'_3, \Delta'_2;\Delta'_1,N' $}
	\BIC{$ M,M',\Gamma_2,\Gamma'_2;\Gamma_1,\Gamma'_1,\Gamma_3,\Gamma'_3\vdash \Delta_3,\Delta'_3, \Delta_1,\Delta_1';\Delta_2,\Delta'_2,N,N' $}
\end{prooftree}
where $ \Diamond\bigwedge(\Gamma_2,\neg \Delta_2, \neg D) $ is in the antecedent or $ \Box\bigvee(\neg \Gamma_2, D,\Delta_2) $ is in the succedent in the left premise, and $ \Diamond\bigwedge(D,\Gamma_2',\neg \Delta'_2) $  is in the antecedent or $ \Box\bigvee(\neg D,\neg  \Gamma_2,\Delta_2) $ in the right premise. Note  the permutation of $ \Gamma_1,\Gamma_2 $, of $ \Gamma'_1,\Gamma'_2 $, of $ \Delta_1,\Delta_2 $ and of $ \Delta'_1,\Delta'_2 $ in the rule.

We can consider the rule $ \text{Cut}^; $ as a new version of the rule cut, where its admissibility  is proved by  admissibility of the cut rule as follows:
	\begin{prooftree}
	\AXC{$ M,\Gamma_1;\Gamma_2,\Gamma_3\vdash \Delta_3, \Delta_2,D;\Delta_1,N $}
	\RL{R$ \Box^{G;}(\ref{G;}) $}
	\UIC{$ M,\Gamma_2;\Gamma_1,\Gamma_3\vdash \Delta_3, \Delta_1;\Delta_2,D,N $}
	   
	   \AXC{$ M',\Gamma'_1; D,\Gamma'_2,\Gamma'_3\vdash \Delta'_3, \Delta'_2;\Delta'_1,N' $}
	   \RL{R$ \Box^{G;}(\ref{G;}) $}
	   \UIC{$ M',D,\Gamma'_2; \Gamma'_1,\Gamma'_3\vdash \Delta'_3, \Delta'_1;\Delta'_1,N' $}
	   \RL{Cut}
	    \BIC{$ M,M',\Gamma_2,\Gamma'_2;\Gamma_1,\Gamma'_1,\Gamma_3,\Gamma'_3\vdash \Delta_3,\Delta'_3, \Delta_1,\Delta_1';\Delta_2,\Delta'_2,N,N' $}
\end{prooftree}
	in which  the formulas $ \Diamond\bigwedge(\Gamma_1,\neg \Delta_1) $ and $ \Diamond\bigwedge( \Gamma'_1,\neg \Delta'_1) $	are principal in  the general version of  the rules R$ \Box $, respectively.
	Similarly, the admissibility of the rule $ \text{Cut}^; $ concludes  the admissibility of the rule cut as follows
	
	\begin{prooftree}
		\AXC{$ \Gamma;\vdash;\Delta,D $}
		\RL{R$ \Box^{G;}(\ref{G;}) $}
		\UIC{$ ;\Gamma\vdash\Delta,D; $}
		    	\AXC{$ \Gamma';\vdash;\Delta',D $}
		    \RL{R$ \Box^{G;}(\ref{G;}) $}
		    \UIC{$ ;\Gamma'\vdash\Delta',D; $} 
	\RL{$ \text{Cut}^; $}
	\BIC{$ \Gamma,\Gamma';\vdash ;\Delta,\Delta' $}
		\end{prooftree}
\begin{The}\label{cut}
	The rules 
	\begin{prooftree}
		\AXC{$ \Gamma;P\vdash Q;\Delta,D $}
		\RL{\rm Cut}
		\AXC{$ D,\Gamma'; P'\vdash Q';\Delta' $}
		\BIC{$ \Gamma,\Gamma';P,P'\vdash Q,Q';\Delta,\Delta' $}
		\end{prooftree}
	and 
	\begin{prooftree}
	\AXC{$ M,\Gamma_1;P_2,P_3\vdash Q_3, Q_2,D;\Delta_1,N $}
	\RL{$ \text{\rm Cut}^; $}
	\AXC{$ M',\Gamma'_1; D,P'_2,P'_3\vdash Q'_3, Q'_2;\Delta'_1,N' $}
	\BIC{$ M,M',P_2,P'_2;\Gamma_1,\Gamma'_1,P_3,P'_3\vdash Q_3,Q'_3, \Delta_1,\Delta_1';Q_2,Q'_2,N,N' $}
\end{prooftree}
	\end{The}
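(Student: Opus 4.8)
The plan is to prove the admissibility of both rules at once by a double induction: the outer induction is on the weight (logical complexity) of the cut formula $D$, and the inner one is on the sum of the heights of the derivations of the two premises, which we call the \emph{cut-height}. At each stage we may invoke, as induction hypothesis, the admissibility of Cut and $\text{Cut}^;$ for any cut formula of strictly smaller weight, and — for the same $D$ — for any cut of strictly smaller cut-height. Since Cut and $\text{Cut}^;$ are interderivable modulo Corollary \ref{G;} (as displayed in the paragraph preceding the statement), a single induction covering both suffices; in the reductions below, whenever the cut formula comes to lie in a middle part of a sequent we read the corresponding cut as a $\text{Cut}^;$, and conversely as an ordinary Cut when it lies outside the middle parts.

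\emph{Trivial and non-principal cases.} If either premise is an initial sequent, the conclusion is again an initial sequent or is obtained from the other premise by the admissible weakening rule (Lemma \ref{W}); in particular this settles the case in which $D$ is the active atom of an axiom, and an atomic $D$ is never principal otherwise. If $D$ is not principal in the last rule $R$ of the derivation of the first premise (the case of the second premise being symmetric), we permute the cut upwards: by the inner induction hypothesis we cut $D$ against each premise of $R$ at strictly smaller cut-height, and re-apply $R$. The delicate instances are the modal rules L$\Diamond$, R$\Box$, L$\Diamond^;$ and R$\Box^;$, which redistribute atomic formulas between the middle part and the rest of a sequent: pushing an ordinary Cut past an R$\Box$ on the left (or an L$\Diamond$ on the right) may turn it into a $\text{Cut}^;$ when the cut formula was an atom absorbed into the middle part, and symmetrically a $\text{Cut}^;$ may reduce to an ordinary Cut — which is exactly why the two rules must be handled together. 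The bookkeeping of partitions, relatedness and the corresponding $\Diamond\bigwedge$ and $\Box\bigvee$ formulas is carried out with the help of Lemma \ref{General}, Corollary \ref{G;}, Corollary \ref{**} and the ``in-and-out-of-scope'' Lemmas \ref{lem3.7} and \ref{*}.

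\emph{Principal cases.} Suppose now $D$ is principal in the last rule of both premises. If $D$ is $B\wedge C$, $B\vee C$, $B\to C$ or $\neg B$, we argue as for G3c: by the height-preserving Inversion Lemma \ref{inversion} we expose the immediate subformulas of $D$ in both premises, cut on $B$ and/or $C$ (strictly smaller weight), and remove the resulting repetitions with the admissible contraction rule (Lemma \ref{C}); for $\text{Cut}^;$ one first peels off, using Lemmas \ref{sadeh}, \ref{sadeh.inv} and \ref{lem3.7}, any occurrence of $D$ inside a displayed corresponding formula $\Diamond\bigwedge(\dots,\neg D)$ or $\Box\bigvee(\dots,D)$, and then applies the same reductions. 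If $D$ is $\Box A$, the first derivation ends with R$\Box$ and the second with L$\Box$; we first cut $\Box A$ at strictly smaller cut-height against the premise $A,\Box A,\dots$ of the terminal L$\Box$, and then cut the result on the formula $A$ (strictly smaller weight) against the premise of the terminal R$\Box$. This inner cut takes place inside a purely modal context in which the atomic side formulas have been moved into a middle part, so it is an instance of $\text{Cut}^;$; a final contraction (Lemma \ref{C}), possibly preceded by applications of the general rules R$\Box^G$ and L$\Diamond^G$ of Lemma \ref{General} and of Corollary \ref{G;} to move formulas back out of the middle parts, yields the desired conclusion. The case $D = \Diamond A$ is symmetric, with R$\Diamond$ and L$\Diamond$ playing the roles of L$\Box$ and R$\Box$.

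The principal modal cases, together with the modal non-principal permutations, are where I expect the real difficulty to lie: one must check that every auxiliary cut produced is genuinely smaller — either in the weight of its cut formula or in its cut-height — so that the induction is well-founded, and that the rearrangements of the middle parts, of the associated partitions and of their corresponding $\Diamond\bigwedge$ and $\Box\bigvee$ formulas remain coherent, so that R$\Box^;$ and L$\Diamond^;$ may legitimately be re-applied afterwards. It is precisely here that the semicolon formulation of G3{\scriptsize S5} and the structural results of Section \ref{Structural properties} — especially Lemmas \ref{*} and \ref{General} and Corollary \ref{G;} — do the essential work.
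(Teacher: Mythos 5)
Your overall architecture coincides with the paper's: a simultaneous induction on the weight of $D$ with a subinduction on cut-height, weakening (Lemma \ref{W}) for axiom premises, inversion (Lemma \ref{inversion}) plus contraction (Lemma \ref{C}) for propositional cut formulas, permutation for non-principal last rules, and the crucial mutual hand-off in which an ordinary Cut on an atomic $D$ absorbed into the middle parts by R$\Box$/L$\Diamond$ becomes a $\text{Cut}^;$ of smaller cut-height, while a $\text{Cut}^;$ whose $D$ sits in both principal corresponding formulas of L$\Diamond^;$/R$\Box^;$ drops back to an ordinary Cut on the premises. The one place where you genuinely diverge is the principal modal case. You perform the classical cross-cut: cut $\Box A$ against the premise of the terminal L$\Box$, then cut the result on $A$ against the premise of the terminal R$\Box$, and repair the displaced atoms $P_1,Q_1$ with Corollary \ref{G;} before contracting. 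The paper instead never pairs R$\Box$ with L$\Box$ at all: whenever one premise ends with L$\Box$ (resp.\ R$\Diamond$) it applies the admissible denecessitation rules of Lemma \ref{*} to the \emph{entire other premise} to obtain $\Gamma;P\vdash Q;\Delta,A$ from $\Gamma;P\vdash Q;\Delta,\Box A$ (resp.\ $A,\Gamma'\vdash\Delta'$ from $\Diamond A,\Gamma'\vdash\Delta'$), and then cuts on $A$; this works irrespective of the last rule of that other premise and avoids any middle-part reshuffling. Both reductions are well-founded (one auxiliary cut at smaller height on $\Box A$, one at smaller weight on $A$); yours buys independence from Lemma \ref{*} at this point but pays for it with the Corollary \ref{G;} bookkeeping, which you do flag. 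One small correction: the inner cut on $A$ in your $\Box A$ case is an ordinary Cut, not a $\text{Cut}^;$ --- the occurrence of $A$ in the R$\Box$ premise lies in the outer succedent $N,A$, not in the middle part, and $\text{Cut}^;$ is reserved for atomic cut formulas located in the middle parts. This mislabelling does not affect the validity of the reduction.
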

are admissible, where all formulas in the middle parts of the premises, specially the formula $ D $ in the second cut,  are atomic. 
\begin{proof}
	 The both rules are proved  simultaneously  by a main induction
	on   the complexity of the cut formula $ D $ with a  subinduction on the sum of heights of derivations of the two premises (cut-height).  We prove  the first cut rule, the second  is  proved similarly except some cases that we consider at the end of the proof.
	
	If both of the premises are  axioms, then the conclusion is an axiom, and if only one of the premises is an axiom, then the conclusion is obtained by  weakening (see Lemma \ref{W}). 
	
	If one of the last rules in derivations of the premises is  neither 
	L$\Diamond $
	nor
	R$\Box $,
	then the cut will be transformed to simpler cuts as usual. Note that cut-height can increase in the transformation, but the cut formula
	is reduced. For example, let $ D=\Diamond A $ be the principal formula in the left premise.
	\begin{prooftree}
		\AXC{$ \D_1 $}
		\noLine
		\UIC{$ \Gamma;P\vdash Q; \Delta,\Diamond A,A $}
		\RL{R$\Diamond $}
		\UIC{$ \Gamma;P\vdash Q; \Delta,\Diamond A $}
		           \AXC{$ \D_2 $}
		           \noLine
		           \UIC{$\Diamond A,\Gamma'; P'\vdash Q';\Delta' $}
	\RL{Cut,}
	\BIC{$ \Gamma,\Gamma';P,P'\vdash Q,Q';\Delta,\Delta' $}	           
		\end{prooftree}
This cut is transformed into
	\begin{prooftree}
		\AXC{$ \D_1 $}
		\noLine
		\UIC{$ \Gamma;P\vdash Q; \Delta,\Diamond A,A $\hspace{-0.2cm}}
		\AXC{$ \D_2 $}
		\noLine
		\UIC{$ \Diamond A,\Gamma'; P'\vdash Q';\Delta' $}
		\RL{ Cut }
		\BIC{$ \Gamma,\Gamma';P,P'\vdash Q,Q';\Delta,\Delta',A $}
		            \AXC{$ \D_2 $}
		            \noLine
		            \UIC{$ \Diamond A,\Gamma'; P'\vdash Q';\Delta' $}
		            \RL{Lemma \ref{*}}
		            \UIC{$  A,\Gamma'; P'\vdash Q';\Delta' $}
		            \RL{Cut}
		 \BIC{$ \Gamma,\Gamma',\Gamma';P,P',P'\vdash Q,Q',Q';\Delta,\Delta',\Delta' $}
		 \RL{C,}
		 \UIC{$\Gamma,\Gamma';P,P'\vdash Q,Q';\Delta,\Delta' $}
		\end{prooftree}
	where C is used for contraction rules.
	
Let
	$ D=\Box A $
be the principal formula in the right premise.
	\begin{prooftree}
		\AXC{$ \D_1 $}
		\noLine
		\UIC{$ \Gamma;P\vdash Q; \Delta,\Box A $}
		            \AXC{$ \D_2 $}
		            \noLine
		            \UIC{$ A,\Box A,\Gamma'; P'\vdash Q';\Delta' $}
		            \RL{L$\Box $}
		            \UIC{$ \Box A,\Gamma'; P'\vdash Q';\Delta' $}
		\RL{Cut,}
		\BIC{$ \Gamma,\Gamma';P,P'\vdash Q,Q';\Delta,\Delta' $}
		\end{prooftree}
It is transformed into
	\begin{small}
	\begin{prooftree}
	\AXC{$ \D_1 $}
	\noLine
	\UIC{$\Gamma;P\vdash Q; \Delta,\Box A$}	
	\RL{Lemma \ref{*}}
	\UIC{$\Gamma;P\vdash Q; \Delta, A$}
	                 	\AXC{$\D_1$}
	                 \noLine
	                 \UIC{$\Gamma;P\vdash Q; \Delta,\Box A$}
	                 \AXC{$\D_2$}
	                 \noLine
	                 \UIC{$A,\Box A,\Gamma'; P'\vdash Q';\Delta'$}
	                \RL{Cut} 
	                 \BIC{$A,\Gamma,\Gamma';P,P'\vdash Q,Q';\Delta,\Delta'$}  
	 \RL{Cut} 
	 \BIC{$\Gamma,\Gamma,\Gamma';P,P,P'\vdash Q,Q,Q';\Delta,\Delta,\Delta'$} 
	 \RL{C.}  
	 \UIC{$\Gamma,\Gamma';P,P'\vdash Q,Q';\Delta,\Delta'$}                        
		\end{prooftree}
	\end{small}
	$ \, $\\
If 
$ D=A\wedge B $,
$ D=A\vee B $,
$ D=A\rightarrow B $,  or
$ D=\neg A $, then
 by Lemma \ref{inversion}, we use simpler cuts on
 $ A $ and
 $ B $.
 Therefore, in the rest of the proof, we just consider the  cases in which the last rules are 
 L$\Diamond $ or R$\Box $.
 
Case 1.	The cut formula $ D $ is modal.\\
Subcase 1.1. Let
	$ \Delta=Q_1,N,\Box A $ and 
	$ \Delta'=Q'_1,N',\Box B $, and let  the last rules in derivations of the premises be
	R$\Box $ with principal formulas
	$ \Box A $
	and
	$ \Box B $.
	\begin{prooftree}
		\AXC{$ \D_1 $}
		\noLine
		\UIC{$ M;P_1,P\vdash Q,Q_1;N, A,D $}
		\RL{R$\Box $}
		\UIC{$ M,P_1;P\vdash Q;Q_1,N, \Box A,D $}
		           \AXC{$ \D_2 $}
		           \noLine
		           \UIC{$ D,M';P'_1,P'\vdash Q',Q'_1;N', B $}
		           \RL{R$\Box $}
		           \UIC{$  D,M',P'_1;P'\vdash Q';Q'_1,N', \Box B $}
		\RL{Cut,}
		\BIC{$ M,P_1,M',P'_1;P,P'\vdash Q,Q'; Q_1,N,Q'_1,N',\Box A,\Box B $}
		\end{prooftree}
where 	$ \Gamma=M,P_1 $ and $ \Gamma'=M',P'_1 $. This cut is transformed into
	\begin{prooftree}
		\AXC{$ \D_1 $}
		\noLine
		\UIC{$ M;P_1,P\vdash Q,Q_1;N, A,D $}
		              \AXC{$ \D_2 $}
		              \noLine
		              \UIC{$D,M';P'_1,P'\vdash Q',Q'_1;N', B $}
		              \RL{R$\Box $}
		              \UIC{$ D,M';P'_1,P'\vdash Q',Q'_1;N', \Box B $}
		 \RL{Cut}
		 \BIC{$ M,M';P_1,P,P'_1,P'\vdash Q,Q_1, Q',Q'_1;N,N', A,\Box B $}
		 \RL{R$\Box $.}
		 \UIC{$ M,P_1,M',P'_1;P,P'\vdash Q,Q';Q_1,N,\Box A,Q'_1,N',\Box B $}
		\end{prooftree}
Subcase 1.2. Let
	$ \Gamma=\Diamond A,M,P_1$ and  $\Delta'=Q'_1,N',\Box B $, and let the last rules  be L$\Diamond $ and R$\Box $ with principal formulas
	$\Diamond A $
	and
	$ \Box B $:
	\begin{prooftree}
		\AXC{$ \D_1 $}
		\noLine
		\UIC{$ A,M;P_1,P\vdash Q,Q_1;N_1, D $}
		\RL{L$\Diamond $}
		\UIC{$  \Diamond A,M,P_1;P\vdash Q;Q_1,N, D  $}
		              \AXC{$ \D_2 $}
		              \noLine
		              \UIC{$D,M';P'_1,P'\vdash Q',Q'_1;N', B$}
		              \RL{R$\Box$}
		              \UIC{$D,M',P'_1;P'\vdash Q';Q'_1,N', \Box B$}  
		  \RL{Cut,}
		  \BIC{$ \Diamond A,M,P_1,M',P'_1;P,P'\vdash Q,Q';Q_1,N,Q'_1,N',\Box B $}
		\end{prooftree}
where 	$ \Delta=Q_1,N $ and $ \Gamma'=M',P'_1$. This cut is transformed into
	\begin{prooftree}
		\AXC{$ \D_1 $}
		\noLine
		\UIC{$  A,M;P_1,P\vdash Q,Q_1;N, D $}
		       \AXC{$ \D_2 $}
		       \noLine
		       \UIC{$ D,M';P'_1,P'\vdash Q',Q'_1;N', B $}
		       \RL{R$\Box $}
		       \UIC{$  D,M';P'_1,P'\vdash Q',Q'_1;N', \Box B $}
	  \RL{Cut}
	  \BIC{$ A,M,M';P_1,P'_1,P,P'\vdash Q,Q_1, Q_1,Q'_1;N,N', \Box B $}
	  \RL{L$\Diamond $.}
	  \UIC{$ \Diamond A,M,P_1,M',P'_1;P,P'\vdash Q,Q';Q_1,N,Q'_1,N',\Box B $}
		\end{prooftree}
Subcase 1.3. Let 
	$ \Gamma=\Diamond A,M,P_1$ and  $\Gamma'=\Diamond B,M',P'_1 $, and let the last rules  be L$\Diamond $  with principal formulas
	$\Diamond A $
	and
	$ \Diamond B $:
	\begin{prooftree}
		\AXC{$ \D_1 $}
		\noLine
		\UIC{$  A,M;P_1,P\vdash Q,Q_1;N, D $}
		\RL{L$\Diamond $}
		\UIC{$ \Diamond A,M,P_1;P\vdash Q;Q_1,N, D $}
		              \AXC{$ \D_2 $}
		              \noLine
		              \UIC{$  D, B,M';P'_1,P'\vdash Q',Q'_1;N'  $}
		              \RL{L$\Diamond $}
		              \UIC{$ D,\Diamond B,M',P'_1;P'\vdash Q';Q'_1,N' $}
		\RL{Cut,}
		\BIC{$ \Diamond A,M,P_1, \Diamond B,M',P'_1;P,P'\vdash Q,Q';Q_1,N,Q'_1,N' $}
		\end{prooftree}
where  	$ \Delta=N,Q_1 $ and $ \Delta'=N',Q'_1 $. This cut is transformed into
	\begin{prooftree}
		\AXC{$ \D_1 $}
		\noLine
		\UIC{$  A,M;P_1,P\vdash Q,Q_1;N, D $}
		               \AXC{$ \D_2 $}
		               \noLine
		               \UIC{$ D, B,M';P'_1,P'\vdash Q',Q'_1;N'  $}
		               \RL{L$\Diamond $}
		               \UIC{$ D,\Diamond B,M';P'_1,P'\vdash Q',Q'_1;N' $}
		 \RL{ Cut }
		 \BIC{$  A,M, \Diamond B,M';P_1,P,P'_1,P'\vdash Q,Q_1,Q',Q'_1;N,N' $}
		 \RL{L$\Diamond $.}
		 \UIC{$ \Diamond A,M,P_1, \Diamond B,M',P'_1;P,P'\vdash Q,Q';Q_1,N,Q'_1,N' $}
		\end{prooftree}
Subcase 1.4. Let
	$ \Gamma'=\Diamond B,M',P'_1$ and  $\:\Delta=Q_1,N,\Box A $, and let  the last rules  be R$\Box $ and L$\Diamond $ with principal formulas
	$\Box A $
	and
	$ \Diamond B $:
	\begin{prooftree}
		\AXC{$ \D_1 $}
		\noLine
		\UIC{$ M;P_1,P\vdash Q,Q_1;N,A, D $}
		\RL{R$\Box $}
		\UIC{$ M,P_1;P\vdash Q;Q_1,N,\Box A, D $}
		             \AXC{$\D_2  $}
		             \noLine
		             \UIC{$ D, B,M';P'_1,P'\vdash Q',Q'_1;N'  $}
		             \RL{L$\Diamond $}
		             \UIC{$ D,\Diamond B,M',P'_1;P'\vdash Q';Q'_1,N' $}
		\RL{Cut,}
		\BIC{$  M,P_1, \Diamond B,M',P'_1;P,P'\vdash Q,Q';Q_1,N,\Box A,Q'_1,N' $}
		\end{prooftree}
where 	$ \Gamma=M,P_1$ and  $\Delta'=N',Q'_1 $. This cut is transformed into
\begin{prooftree}
	\AXC{$ \D_1 $}
	\noLine
	\UIC{$  M;P_1,P\vdash Q,Q_1;N,A, D $}
	                \AXC{$ \D_2 $}
	                \noLine
	                \UIC{$ D, B,M';P'_1,P'\vdash Q',Q'_1;N'  $}
	                \RL{L$\Diamond $}
	                \UIC{$ D,\Diamond B,M';P'_1,P'\vdash Q',Q'_1;N' $}
	 \RL{Cut}
	 \BIC{$  M, \Diamond B,M';P_1,P,P'_1,P'\vdash Q,Q_1,Q',Q'_1;N,A,N' $}
	 \RL{R$\Box $.}
	 \UIC{$ M,P_1, \Diamond B,M',P'_1;P,P'\vdash Q,Q';Q_1,N,\Box A,Q'_1,N'$}
	\end{prooftree}
 Case 2. The cut formula  $ D $ is atomic.
In this case we only consider the case that when last rules in derivations of the premises are R$ \Box $ and L$ \Diamond $; the other cases are proved by similar argument.\\
 Let
$ \Gamma'=\Diamond B,M',P'_1$ and  $\Delta=Q_1,N,\Box A $,  and let the last rules in derivations of the premises be L$\Diamond $  and R$\Box $ with principal formulas
$ \Diamond B $
and
$\Box A $:
	\begin{prooftree}
		\AXC{$ \D_1 $}
		\noLine
		\UIC{$ M;P_1,P\vdash Q,D,Q_1;N,A $}
		\RL{R$\Box $}
		\UIC{$ M,P_1;P\vdash Q;Q_1,N,\Box A, D $}
		    \AXC{$ \D_2 $}
		    \noLine
	    	\UIC{$  B,M';P'_1,D,P'\vdash Q',Q'_1;N'  $}
		    \RL{L$\Diamond $}
		    \UIC{$ D,\Diamond B,M',P'_1;P'\vdash Q';Q'_1,N' $}
		\RL{Cut.}
		\BIC{$  M,P_1, \Diamond B,M',P'_1;P,P'\vdash Q,Q';Q_1,N,\Box A,Q'_1,N' $}
	\end{prooftree}
This is transformed into
\begin{prooftree}
	\AXC{$ \D_1 $}
	\noLine
	\UIC{$ M;P_1,P\vdash Q,D,Q_1;N,A $}
	\AXC{$ \D_2 $}
	\noLine
	\UIC{$  B,M';P'_1,D,P'\vdash Q',Q'_1;N'  $}
	\RL{$ \text{Cut}^; $,}
	\BIC{$  M,P_1,M',P'_1;B,P,P'\vdash Q,Q',A; Q_1,N,Q'_1,N' $}
	\UIC{$  M,P_1, \Diamond B,M',P'_1;P,P'\vdash Q,Q';Q_1,N,\Box A,Q'_1,N' $}
\end{prooftree}
Finally, for the second rule, we consider some cases.\\
Case 1. If one of  the  last rule in derivation of  the premises are not modal rules, then the cut is transformed into simpler cut(s) and then by applying Lemma \ref{sadeh}, the conclusion is obtained. In the following we consider some cases.\\
Subcase 1.1. Let  $ \Gamma'_1=\Gamma'_{1_1},A\wedge B $, and let $ A\wedge B $   be the principal formula in the right premise.
	\begin{prooftree}
	\AXC{$ \D_1 $}
	\noLine
	\UIC{$ M,\Gamma_1;P_2,P_3\vdash Q_3, Q_2,D;\Delta_1,N $}	
	       \AXC{$ \D_2 $}
	       \noLine
	       \UIC{$ M',\Gamma'_{1_1},A, B; D,P'_2,P'_3\vdash Q'_3, Q'_2;\Delta'_1,N' $}
	     \RL{$ L\wedge $}
	    \UIC{$ M',\Gamma'_{1_1},A\wedge B; D,P'_2,P'_3\vdash Q'_3, Q'_2;\Delta'_1,N' $}
	    \RL{$ \text{Cut}^; $}
	    \BIC{$ M,M',P_2,P'_2;\Gamma_1,\Gamma'_{1_1},A\wedge B,P_3,P'_3\vdash Q_3,Q'_3, \Delta_1,\Delta_1';Q_2,Q'_2,N,N' $}
\end{prooftree}
This cut is transformed into
\begin{prooftree}
	\AXC{$ \D_1 $}
	\noLine
	\UIC{$ M,\Gamma_1;P_2,P_3\vdash Q_3, Q_2,D;\Delta_1,N $}
	    \AXC{$ \D_2 $}
	    \noLine
	     \UIC{$ M',\Gamma'_{1_1},A, B; D,P'_2,P'_3\vdash Q'_3, Q'_2;\Delta'_1,N' $} 
	  \RL{$ \text{Cut}^; $}
	   \BIC{$ M,M',P_2,P'_2;\Gamma_1\Gamma'_{1_1},A, B,P_3,P'_3\vdash Q_3,Q'_3, \Delta_1,\Delta_1';Q_2,Q'_2,N,N' $}
	   \RL{Lemma \ref{sadeh.inv}}
	   \UIC{$ M,M',P_2,P'_2;\Gamma_1,\Gamma'_{1_1},A\wedge B,P_3,P'_3\vdash Q_3,Q'_3, \Delta_1,\Delta_1';Q_2,Q'_2,N,N' $}
	\end{prooftree}
Subcase 1.2. Let  $ \Gamma'_1=\Gamma'_{1_1},A\vee B $, and let $ A\vee B $ be the principal formula.
	\begin{prooftree}
		\AXC{$ \D_1 $}
		\noLine
		\UIC{$ M,\Gamma_1;P_2,P_3\vdash Q_3, Q_2,D;\Delta_1,N $}
	    	\AXC{$ \D_2 $}
		    \noLine
			\UIC{$ M',\Gamma'_{1_1},A\vee B; D,P'_2,P'_3\vdash Q'_3, Q'_2;\Delta'_1,N' $}  
	\RL{$ \text{Cut}^; $,}
	\BIC{$ M,M',P_2,P'_2;\Gamma_{1_1},\Gamma'_{1_1},A\vee B,P_3,P'_3\vdash Q_3,Q'_3, \Delta_1,\Delta_1';Q_2,Q'_2,N,N' $}
\end{prooftree}
where $ \D_2 $ is as follows
\begin{prooftree}
	\AXC{$ \D_{2_1} $}
	\noLine
	\UIC{$  M',\Gamma'_{1_1},A; D,P'_2,P'_3\vdash Q'_3, Q'_2;\Delta'_1,N'  $}	
	   \AXC{$ \D_{2_2} $}
	    \noLine
	    \UIC{$ M',\Gamma'_{1_1}, B; D,P'_2,P'_3\vdash Q'_3, Q'_2;\Delta'_1,N' $}
	\RL{L$ \vee $}
	\BIC{$ M',\Gamma'_{1_1},A\vee B; D,P'_2,P'_3\vdash Q'_3, Q'_2;\Delta'_1,N' $}
	\end{prooftree}
This cut is transformed  into two cuts:
\begin{prooftree}
	\AXC{$ \D_1 $}
	\noLine
	\UIC{$ M,\Gamma_1;P_2,P_3\vdash Q_3, Q_2,D;\Delta_1,N $}
      	\AXC{$ \D_{2_1} $}
	     \noLine
	      \UIC{$  M',\Gamma'_{1_1},A; D,P'_2,P'_3\vdash Q'_3, Q'_2;\Delta'_1,N'  $}	
	\RL{$ \text{Cut}^; $}
	\BIC{$ M,M',P_2,P'_2;\Gamma_1,\Gamma'_{1_1},A,P_3,P'_3\vdash Q_3, Q'_3,\Delta_1,\Delta'_1;Q_2, Q'_2;N,N' $}
\end{prooftree}
and
\begin{prooftree}
\AXC{$ \D_1 $}
\noLine
\UIC{$ M,\Gamma_1;P_2,P_3\vdash Q_3, Q_2,D;\Delta_1,N $}
     \AXC{$ \D_{2_2} $}
     \noLine
      \UIC{$ M',\Gamma'_{1_1}, B; D,P'_2,P'_3\vdash Q'_3, Q'_2;\Delta'_1,N' $}	
\RL{$ \text{Cut}^; $}
\BIC{$ M,M',P_2,P'_2;\Gamma_1,\Gamma'_{1_1},B,P_3,P'_3\vdash Q_3, Q'_3,\Delta_1,\Delta'_1;Q_2, Q'_2;N,N' $}
\end{prooftree}
Therefore by applying Lemma \ref{sadeh.inv}  the conclusion is obtained.\\
Subcase 1.3. Let  $ \Gamma'_1=\Gamma'_{1_1},A\rightarrow B $, and let $ A\rightarrow B $ be the principal formula.
\begin{prooftree}
	\AXC{$ \D_1 $}
	\noLine
	\UIC{$ M,\Gamma_1;P_2,P_3\vdash Q_3, Q_2,D;\Delta_1,N $}
	    \AXC{$ \D_2 $}
	    \noLine
	    \UIC{$ M',\Gamma'_{1_1},A\rightarrow B; D,P'_2,P'_3\vdash Q'_3, Q'_2;\Delta'_1,N' $}  
	\RL{$ \text{Cut}^; $,}
	\BIC{$ M,M',P_2,P'_2;\Gamma_{1_1},\Gamma'_{1_1},A\rightarrow B,P_3,P'_3\vdash Q_3,Q'_3, \Delta_1,\Delta_1';Q_2,Q'_2,N,N' $}
\end{prooftree}
where $ \D_2 $ is as follows
\begin{prooftree}
	\AXC{$ \D_{2_1} $}
	\noLine
	\UIC{$  M',\Gamma'_{1_1}; D,P'_2,P'_3\vdash Q'_3, Q'_2;\Delta'_1,N',A  $}	
	    \AXC{$ \D_{2_2} $}
    	\noLine
    	\UIC{$ B,M',\Gamma'_{1_1}; D,P'_2,P'_3\vdash Q'_3, Q'_2;\Delta'_1,N' $}
	\RL{L$ \rightarrow $}
	\BIC{$ M',\Gamma'_{1_1},A\rightarrow B; D,P'_2,P'_3\vdash Q'_3, Q'_2;\Delta'_1,N' $}
\end{prooftree}
This cut is transformed  into two cuts:
\begin{prooftree}
	\AXC{$ \D_1 $}
	\noLine
	\UIC{$ M,\Gamma_1;P_2,P_3\vdash Q_3, Q_2,D;\Delta_1,N $}
    	\AXC{$ \D_{2_1} $}
    	\noLine
    	\UIC{$  M',\Gamma'_{1_1}; D,P'_2,P'_3\vdash Q'_3, Q'_2;\Delta'_1,N',A  $}	
	\RL{$ \text{Cut}^; $}
	\BIC{$ M,M',P_2,P'_2;\Gamma_1,\Gamma'_{1_1},P_3,P'_3\vdash Q_3, Q'_3,\Delta_1,\Delta'_1;Q_2, Q'_2;N,N',A $}
\end{prooftree}
and
\begin{prooftree}
	\AXC{$ \D_1 $}
	\noLine
	\UIC{$ M,\Gamma_1;P_2,P_3\vdash Q_3, Q_2,D;\Delta_1,N $}
	   \AXC{$ \D_{2_2} $}
	   \noLine
	   \UIC{$ B,M',\Gamma'_{1_1}; D,P'_2,P'_3\vdash Q'_3, Q'_2;\Delta'_1,N' $}	
	\RL{$ \text{Cut}^; $}
	\BIC{$ M,B,M',P_2,P'_2;\Gamma_1,\Gamma'_{1_1},P_3,P'_3\vdash Q_3, Q'_3,\Delta_1,\Delta'_1;Q_2, Q'_2;N,N' $}
\end{prooftree}
Therefore by applying Lemma \ref{sadeh.inv}  the conclusion is obtained.

Case 2. If  the  last rules in derivation of the premises are modal rules and in one of them,
$ D $ does not occur in   the principal formula, then the cut is transformed into a simpler cut. As a typical example, let the last rule in derivation of the premise is
 L$\Diamond $ be as follows
	\begin{prooftree}
	\AXC{$ M,\Gamma_1;P_2,P_3\vdash Q_3, Q_2,D;\Delta_1,N $}
     	\AXC{$ \D_2 $}
    	\noLine
	    \UIC{$B, M'_1;P'_1, D,P'_2,P'_3\vdash Q'_3, Q'_2,Q'_1;N' $}
	    \RL{L$ \Diamond $}
    	\UIC{$ \Diamond B,M'_1,P'_1; D,P'_2,P'_3\vdash Q'_3, Q'_2;Q'_1,N' $}
	\RL{$ Cut^; $,}
	\BIC{$ M,\Diamond B,M'_1,P_2,P'_2;\Gamma_1,P'_1,P_3,P'_3\vdash Q_3,Q'_3, \Delta_1,Q_1';Q_2,Q'_2,N,N' $}
\end{prooftree}

where  $ M'=\Diamond B,M'_1 $ and  $ \Gamma'_1=P'_1 $,$ \,\Delta'_1=Q'_1 $ are atomic. This cut is transformed into
\begin{prooftree}
	\AXC{$ M,P_1;P_2,P_3\vdash Q_3, Q_2,D;Q_1,N $}
	       \AXC{$ \D_2 $}
	       \noLine
	       \UIC{$B, M';P'_1, D,P'_2,P'_3\vdash Q'_3, Q'_2,Q'_1;N' $}
	       \RL{$ \text{Cut}^;. $}
	 \BIC{$ M,M',P_2,P'_2;B,P_1,P'_1,P_3,P'_3\vdash Q_3,Q'_3, Q_1,Q_1';Q_2,Q'_2,N,N' $}
	 \RL{$ \ast $}
	 \UIC{$ M,\Diamond B,M',P_2,P'_2;P_1,P'_1,P_3,P'_3\vdash Q_3,Q'_3, Q_1,Q_1';Q_2,Q'_2,N,N' $}
	\end{prooftree} 
where the conclusion in the  rule $ \ast $ is a rewriting of its premise. 

Case 3. Let the last rules in derivations of the premises be $  L\Diamond^;$ or R$\Box^; $ in which $ D $ occurs in both principal formulas. For example let the following derivation with the principal formulas 
$ \Diamond\bigwedge( P_2,\neg Q_2,\neg D) $ and $\Box\bigvee(\neg D,\neg P'_2, Q'_2) $, be respectively:
\begin{prooftree}
	\AXC{$\D_1 $}
	\noLine
	\UIC{$M,P_2;P_1,P_3\vdash Q_3,Q_1;Q_2,D,N$}
	\RL{L$\Diamond^;$}
	\UIC{$M,P_1;P_2,P_3\vdash Q_3,Q_2,D;Q_1,N$}
	\AXC{$\D_2$}
	\noLine
	\UIC{$M',D,P'_2;P'_1,P'_3\vdash Q'_3,Q'_1;Q'_2,N'$}
	\RL{R$\Box^;$}
	\UIC{$M',P'_1;D,P'_2,P'_3\vdash Q'_3,Q'_2;Q'_1,N'$}
	\RL{ $ \text{Cut}^; $.}
	\BIC{$ M,M',P_2,P'_2;P_1,P'_1,P_3,P'_3\vdash Q_3,Q'_3,Q'_1,Q'_1;Q_2,Q'_2,N,N'$}
\end{prooftree}

This cut rule is transformed into the first cut as follows:
\begin{prooftree}
	\AXC{$\D_1 $}
	\noLine
	\UIC{$M,P_2;P_1,P_3\vdash Q_3,Q_1;Q_2,D,N$}
	    	\AXC{$\D_2$}
	        \noLine
	        \UIC{$M',D,P'_2;P'_1,P'_3\vdash Q'_3,Q'_1;Q'_2,N'$}
    \RL{Cut}
    \BIC{$ M,M',P_2,P'_2;P_1,P'_1,P_3,P'_3\vdash Q_3,Q'_3,Q'_1,Q'_1;Q_2,Q'_2,N,N'$}
\end{prooftree}
The other cases are proved similarly.
\end{proof}
\begin{The}\label{complete}
	The following are equivalent.
	\begin{itemize}
		\item[{\rm (1)}]
		The sequent
		$ \Gamma\vdash A $ is S5-valid.
		\item[{\rm (2)}]
	$ \Gamma\vdash_{\text{S5}} A $.
		\item[{\rm (3)}]
		The sequent
	$ \Gamma\vdash A $
	is provable in $ \text{\rm G3{\scriptsize S5}}^;  $.
\end{itemize}
\end{The}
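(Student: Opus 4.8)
The plan is to close the cycle of implications $(3)\Rightarrow(1)\Rightarrow(2)\Rightarrow(3)$. The implication $(3)\Rightarrow(1)$ is exactly the Soundness Theorem~\ref{Soundness}. The implication $(1)\Rightarrow(2)$ is the classical strong completeness of the Hilbert-style axiomatisation of S5 with respect to its Kripke semantics over equivalence frames, for which I would simply invoke the standard references (\cite{Chellas,Blackburn}); nothing about our calculus is needed here. So the real work is $(2)\Rightarrow(3)$: given a Hilbert derivation of $A$ from $\Gamma$ in S5 (which is finite and uses only finitely many hypotheses, so all sequents in sight are over finite multisets), I would construct a derivation of $\Gamma\vdash A$ in $\text{\rm G3{\scriptsize S5}}^;$ by induction on the length of that Hilbert derivation.

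For the induction in $(2)\Rightarrow(3)$ there are four kinds of steps to simulate. \emph{Hypotheses:} if $B\in\Gamma$, then $B\vdash B$ is derivable by Lemma~\ref{arbit}, and admissibility of weakening (Lemma~\ref{W}) gives $\Gamma\vdash B$. \emph{Axioms:} every propositional tautology of the modal language is provable, since the propositional rules of $\text{\rm G3{\scriptsize S5}}^;$ with empty middle parts form G3c, which — together with Lemma~\ref{arbit} used to close branches on compound modal subformulas — is complete for classical propositional logic; the modal axiom schemes are handled by short explicit derivations in the style of Example~\ref{exa}, e.g. (5) $\Diamond A\to\Box\Diamond A$ follows from $\Diamond A\vdash\Diamond A$ (Lemma~\ref{arbit}) by one application of R$\Box$ and then R$\rightarrow$; (T) $\Box A\to A$ follows from $A\vdash A$ by weakening, L$\Box$ and R$\rightarrow$; (K) $\Box(A\to B)\to(\Box A\to\Box B)$ needs $A\vdash A$, weakening, L$\rightarrow$, two applications of L$\Box$, one of R$\Box$ and R$\rightarrow$; and (Dual) $\Box A\leftrightarrow\neg\Diamond\neg A$ splits into its two implications, each provable using L$\Diamond$, R$\Box$, R$\Diamond$ together with Lemma~\ref{arbit} and weakening. \emph{Modus ponens:} from $\Gamma\vdash A$ and $\Gamma\vdash A\rightarrow B$, height-preserving invertibility of R$\rightarrow$ (Lemma~\ref{inversion}) gives $A,\Gamma\vdash B$; an application of the admissible Cut rule (Theorem~\ref{cut}) on $A$ yields $\Gamma,\Gamma\vdash B$, and admissibility of contraction (Lemma~\ref{C}) gives $\Gamma\vdash B$. \emph{Necessitation:} in the Hilbert system this rule is applied only to premises that are theorems, so when $\Box B$ is obtained from $B$ the sub-derivation of $B$ uses no hypotheses; by the induction hypothesis $\vdash B$ is provable in $\text{\rm G3{\scriptsize S5}}^;$, whence R$\Box$ (with all of $M,N,P_1,P_2,Q_1,Q_2$ empty) gives $\vdash\Box B$, and weakening gives $\Gamma\vdash\Box B$.

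Assembling these cases completes the induction and hence the proof. I do not expect a deep obstacle; the work is entirely bookkeeping. The two points that need care are: (i) that the admissibility of Cut (Theorem~\ref{cut}) is genuinely available for an \emph{arbitrary} cut formula $A$ — it is, since in the first Cut rule there the cut formula $D$ sits in the succedent rather than in a middle part — which is what licenses the modus-ponens step; and (ii) that the Necessitation step is simulated correctly, exploiting the fact that in the axiom system Necessitation is restricted to theorems, so the induction hypothesis delivers a $\text{\rm G3{\scriptsize S5}}^;$-proof of $\vdash B$ rather than merely of $\Gamma\vdash B$ (from which $\Gamma\vdash\Box B$ would not follow). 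One must also verify each modal axiom scheme separately, but, as indicated above, each reduces to Lemma~\ref{arbit} plus a handful of applications of the rules of $\text{\rm G3{\scriptsize S5}}^;$.
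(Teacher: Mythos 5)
Your proposal is correct and follows essentially the same route as the paper: the same cycle of implications, with (2)$\Rightarrow$(3) proved by induction on the Hilbert derivation, the same explicit derivation of axiom (5) via R$\Box$, and the same crucial observation that Necessitation is restricted to theorems so the induction hypothesis yields $\vdash A_j$ rather than $\Gamma\vdash A_j$. The only (immaterial) difference is in the Modus Ponens case, where you use invertibility of R$\rightarrow$ plus one cut and a contraction, while the paper derives $A_j\rightarrow A_i, A_j\vdash A_i$ directly and uses two cuts.
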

\begin{proof}
	 (1) implies  (2)  by completeness of S5.
	 (3) implies (1)  by soundness of $ \text{\rm G3{\scriptsize S5}}^;  $. We show that (2) implies (3).
	Suppose 
	$ A_1,\ldots,A_n $
	is  an S5-proof  of $ A $ from $ \Gamma $. This means that $ A_n $ is $ A $ and that each $ A_i $ is in $ \Gamma $, is an axiom, or is inferred by  modus ponens or necessitation. It is straightforward to
	prove, by induction on $ i $, that $ \Gamma\vdash A_i $ for each $ A_i $.
	
	Case 1. $ A_i\in \Gamma $: It is a direct consequence of Lemma \ref{arbit}.
	
	Case 2. $ A_i $ is an axiom of S5: All axioms of S5 are easily proved in $ \text{\rm G3{\scriptsize S5}}^;  $. As a typical example, in the following we prove the axiom 5:
	\begin{prooftree}
		\AXC{$ \Diamond A\vdash \Diamond A $}
		\RL{R$\Box $}
		\UIC{$ \Diamond A\vdash \Box\Diamond A $}
		\RL{R$\rightarrow $.}
		\UIC{$ \vdash \Diamond A\rightarrow \Box\Diamond A $}
		\end{prooftree}
	
	Case 3. $ A_i $ is inferred by  modus ponens: Suppose  $ A_i $ is inferred from $ A_j $ and $  A_j\rightarrow A_i $, $ j<i $, by use of the cut rule we prove $ \Gamma\vdash A_i $:
	\begin{prooftree}
		\AXC{}
		\RL{IH}
		\UIC{$ \Gamma\vdash A_j $}
		               \AXC{}
		               \RL{ IH }
		               \UIC{$ \Gamma\vdash A_j\rightarrow A_i $}
		                            \AXC{$ A_j\vdash A_i, A_j $}
		                             \AXC{$  A_i\vdash A_i $}
		                             \RL{L$\rightarrow $}
		                             \BIC{$ A_j\rightarrow A_i, A_j\vdash A_i $}
		               \RL{Cut}
		               \BIC{$A_j,\Gamma\vdash A_i  $}
        \RL{Cut}
        \BIC{$ \Gamma,\Gamma\vdash A_i $}
        \RL{LC. }
        \UIC{$ \Gamma\vdash A_i $}
		\end{prooftree}
	Case 4. $ A_i $ is inferred by  necessitation:
Suppose $ A_i=\Box A_j $ is inferred from $ A_j $ by  necessitation. In this case, $ \vdash_{S5}A_j $ (since the rule necessitation  can be applied only to premises which are derivable in the axiomatic system) and so we have:
\begin{prooftree}
\AXC{}
\RL{IH}
\UIC{$ \vdash A_j $}
\RL{R$ \Box $}
\UIC{$ \vdash\Box A_j $}
\RL{W.}
\UIC{$ \Gamma\vdash A_i $}	
\end{prooftree}
	\end{proof}
\begin{Cor}
$ \text{\rm G3{\scriptsize S5}}^;  $ is sound and complete with respect to the S5 {\rm Kripke} frames.
\end{Cor}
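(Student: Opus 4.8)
The plan is to assemble this corollary from the two principal results already established, rather than to prove anything new. For the soundness half, the Soundness Theorem \ref{Soundness} already asserts, for an \emph{arbitrary} sequent $\Gamma\vdash\Delta$, that provability in $\text{\rm G3{\scriptsize S5}}^;$ implies S5-validity, that is, truth at every state of every Kripke model whose accessibility relation is an equivalence relation; by the remark in Section \ref{sec 2} this is the same as validity over the (equivalent) universal-relation frames, so the soundness half of the corollary is immediate.

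For the completeness half I would reduce to the single-succedent case covered by Theorem \ref{complete}. Given an S5-valid sequent $\Gamma\vdash\Delta$, the sequent ${}\vdash\bigwedge\Gamma\rightarrow\bigvee\Delta$ is then S5-valid by the paper's definition of validity for sequents, so by the implication $(1)\Rightarrow(3)$ of Theorem \ref{complete} it is provable in $\text{\rm G3{\scriptsize S5}}^;$. From this I recover $\Gamma\vdash\Delta$ using the height-preserving invertibility of the propositional rules (Lemma \ref{inversion}): invert R$\rightarrow$ to obtain $\bigwedge\Gamma\vdash\bigvee\Delta$, then invert L$\wedge$ repeatedly to split the conjunction back into the multiset $\Gamma$, and finally invert R$\vee$ repeatedly to split the disjunction back into $\Delta$. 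The degenerate cases where $\Gamma$ or $\Delta$ is empty are handled separately so as to avoid $\top$ and $\bot$ altogether: when $\Gamma$ is empty apply Theorem \ref{complete} directly to ${}\vdash\bigvee\Delta$ and invert R$\vee$, and when $\Delta$ is empty apply it to ${}\vdash\neg\bigwedge\Gamma$ and then invert R$\neg$ and L$\wedge$. Alternatively one can finish by cutting, via Theorem \ref{cut}, against the sequents $\Gamma\vdash\bigwedge\Gamma$ and $\bigvee\Delta\vdash\Delta$, which are derivable by Lemma \ref{arbit} together with the propositional rules. Combining the two halves gives that provability in $\text{\rm G3{\scriptsize S5}}^;$ coincides with S5-validity, i.e.\ soundness and completeness with respect to the S5 Kripke frames.

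I do not anticipate a genuine obstacle here, since every ingredient — soundness (Theorem \ref{Soundness}), the three-way equivalence (Theorem \ref{complete}), admissibility of cut (Theorem \ref{cut}) and height-preserving invertibility of the propositional rules (Lemma \ref{inversion}) — is already in place. The only point requiring a moment's care is the bookkeeping in the reduction between a general sequent $\Gamma\vdash\Delta$ and the formula $\bigwedge\Gamma\rightarrow\bigvee\Delta$, in particular the empty-antecedent and empty-succedent subcases, which is why I would route those through $\bigvee\Delta$ and $\neg\bigwedge\Gamma$ respectively.
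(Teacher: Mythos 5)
Your proposal is correct and matches the paper's (implicit) argument: the corollary is stated without proof precisely because it follows immediately from Theorem \ref{Soundness} together with the equivalence $(1)\Leftrightarrow(3)$ of Theorem \ref{complete}. Your additional reduction of an arbitrary sequent $\Gamma\vdash\Delta$ to the single-succedent form via $\bigwedge\Gamma\rightarrow\bigvee\Delta$ and Lemma \ref{inversion} is careful and sound bookkeeping, but it is not a different route --- just a more explicit treatment of a detail the paper leaves tacit.
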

\section{Conclusion}
We have presented system G3{\scriptsize S5}, a sequent calculus for S5, this system does not have the subformula property although in a  bottom-up    proof,   the formulas in the premises are constructed by atomic formulas in the conclusions. For convenience we  have rewritten the rules of G3{\scriptsize S5} by using semicolon in system $ \text{\rm G3{\scriptsize S5}}^;  $ which enjoys the subformula property. Also we have proved the completeness theorem and  admissibility of the  weakening, contraction and cut rules in it. All properties which are proved in $ \text{\rm G3{\scriptsize S5}}^;  $ are also proved in G3{\scriptsize S5} because the system  $ \text{\rm G3{\scriptsize S5}}^;  $ is  a rewritten of the system  G3{\scriptsize S5}.

We can consider the system $ \text{\rm G3{\scriptsize S5}}^;  $  primary, since  the  relatedness of formulas in the middle parts of the premises can be determined from theirs in the conclusion of the rules.

\textbf{Acknowledgement}. The authors would like to thank Meghdad Ghari for helpful
discussions.

\newlength{\bibitemsep}\setlength{\bibitemsep}{.2\baselineskip plus .05\baselineskip minus .05\baselineskip}
\newlength{\bibparskip}\setlength{\bibparskip}{0pt}
\let\oldthebibliography\thebibliography
\renewcommand\thebibliography[1]{%
	\oldthebibliography{#1}%
	\setlength{\parskip}{\bibitemsep}%
	\setlength{\itemsep}{\bibparskip}%
}
\addcontentsline{toc}{section}{References}

 \end{document}